\DeclareFontFamily{OT1}{pzc}{}
\DeclareFontShape{OT1}{pzc}{m}{it}{<-> s * [1.10] pzcmi7t}{}
\DeclareMathAlphabet{\mathpzc}{OT1}{pzc}{m}{it}
\DeclareMathOperator{\Pol}{Pol}
\DeclareMathOperator{\pol}{pol}
\newcommand{\FSch}[1]{\operatorname{FSch}_{#1}}
\newcommand{\DMod}[1]{\operatorname{Dmod}_{#1}}
\newcommand{\DModV}[1]{\operatorname{Dmod}_{#1}^{V,\operatorname{nil}}}
\newcommand{\DDModF}[1]{\mathbb{D}\operatorname{mod}_{#1}^{F}}
\newcommand{\DDModFc}[1]{\mathbb{D}\operatorname{mod}_{#1}^{F,c}}
\newcommand{\DDModFet}[1]{\mathbb{D}\operatorname{mod}_{#1}^{F,et}}
\newcommand{\DieuFor}{\mathbb D^{\operatorname{f}}}
\newcommand{\FGps}[1]{\operatorname{Fgps}_{#1}} 
\newcommand{\FGpsc}[1]{\operatorname{Fgps}^c_{#1}} 
\newcommand{\AbSch}[1]{\operatorname{AbSch}_{#1}} 
\newcommand{\Hopf}[1]{\operatorname{Hopf}_{#1}}
\newcommand{\Hopfu}[1]{\operatorname{Hopf}^u_{#1}}
\renewcommand{\ell}{\mathpzc{l}}
\DeclareMathOperator{\frob}{frob}
\DeclareMathOperator{\Gal}{Gal}
\DeclareMathOperator{\hull}{hull}
\DeclareMathOperator{\Sep}{Sep}
\newcommand{\modtensor}[2]{\rtimes}
\newcommand{\modcotensor}[2]{\hom}
\newcommand{\Spf}[1]{\operatorname{Spf}\left({#1}\right)}
\newcommand{\Reg}[1]{\mathcal{O}_{#1}}
\DeclareMathOperator{\alg}{alg}
\DeclareMathOperator{\Cof}{Cof}
\DeclareMathOperator{\nil}{nil}
\newtheorem{notation*}[equation]{Notation}
\title{Graded $p$-polar rings and their abelian-group valued functors}
\author{Tilman Bauer}
\date\today
\keywords{$p$-polar ring, formal group, affine group scheme, Witt vectors, Dieudonné theory}
\subjclass[2010]{14L05,14L15,14L17,13A99,13A35,16T05}
\begin{document}

\begin{abstract}
As an extension of previous ungraded work, we define a graded $p$-polar ring to be an analog of a graded commutative ring where multiplication is only allowed on $p$-tuples (instead of pairs) of elements of equal degree. We show that the free affine $p$-adic group scheme functor, as well as the free formal group functor, defined on $k$-algebras for a perfect field $k$ of characteristic $p$, factors through $p$-polar $k$-algebras. It follows that the same is true for any affine $p$-adic or formal group functor, in particular for the functor of $p$-typical Witt vectors. As an application, we show that the latter is free on the $p$-polar affine line.
\end{abstract}

\maketitle

\section{Introduction} \label{sec:intro}

In \cite{bauer:p-polar}, I introduced the notion of a $p$-polar $k$-algebra, which, roughly speaking, is a $k$-module with a $p$-fold associative and commutative multiplication defined on it. Here $p$ is a prime and $k$ is any commutative ring. If $k$ is a perfect field of characteristic $p$, I showed that the free affine abelian $p$-adic group functor on $\Spec R$ for a $k$-algebra $R$ factors through the category of $p$-polar $k$-algebras, and as a result, so does the functor of points for \emph{every} $p$-adic group defined over $k$.

In this sequel, I prove the corresponding results for graded commutative $k$-algebras, where $k$ is a graded commutative field. Both the definition of a graded $p$-polar $k$-algebra and the proofs are quite distinct, but not independent, from the ungraded case, and the results are more striking in the presence of a grading. This is my excuse for writing a separate paper.

\begin{defn}
Let $k$ be an graded commutative ring with degree-$0$ part $k_0$ and $A$ a graded $k$-module. Let $M_k$ denote the category of $k$-modules $A$ together with a graded symmetric $k_0$-multilinear map $\mu\colon A_j^{\otimes_{k_0} p} \to A_{jp}$, and let $\pol_p\colon \Alg_k \to M_k$ denote the forgetful functor from graded commutative $k$-algebras to $M_k$, where $\mu$ is given by $p$-fold multiplication.

A \emph{graded $p$-polar $k$-algebra} is an object $A \in M_k$ which is a subobject of $\pol_p(B)$ for some algebra $B \in \Alg_k$.
\end{defn}

This definition agrees with the one given in \cite{bauer:p-polar} when both $k$ and $A$ are concentrated in degree $0$ (Lemma~\ref{lemma:olddef}). 

We denote the category of graded $p$-polar $k$-algebra by $\Pol_p(k)$ and its full subcategory of objects that are finite-dimensional as $k$-vector spaces by $\pol_p(k)$.

The restriction functors $\pol_p\colon \Alg_k \to \Pol_p(k)$ and $\pol_p \colon \alg_k \to \pol_p(k)$ defined on graded commutative $k$-algebras (resp. graded commutative $k$-algebras that are finite dimensional as $k$-modules) are called \emph{polarization}. 

\begin{defn}
A \emph{graded field} $k$ is a graded commutative ring over which every graded module is free. This implies that either $k=k_0$ is an ungraded field concentrated in degree $0$ or $k=k_0[u,u^{-1}]$ for an ungraded field $k_0$ and an element $u$ of positive degree $d$, even unless $k_0$ is of characteristic $2$. We say that $k$ is \emph{perfect} if $k_0$ is of characteristic $0$ or if $k_0$ is a perfect field of characteristic $p$ and $p \nmid d$. 
\end{defn}

Our main results parallel those in \cite{bauer:p-polar}. Let $\AbSch{k}$ denote the category of representable, abelian-group-valued functors on $\Alg_k$, and let $\AbSch{k}^p$ denote the full subcategory of functors taking values in abelian pro-$p$-groups. Let $\FGps{k}$ be the category of ind-representable functors on $\alg_k$ taking values in abelian groups. We will refer to objects of $\AbSch{k}$ and $\FGps{k}$ as affine and formal groups, respectively.

\begin{thm}\label{thm:freeschemefactorization}
Let $k$ be a perfect field of characteristic $p$ and denote by $\Fr(R)$ the left adjoint of the forgetful functors $\AbSch{k}^p \to \Alg_k^{\op}$ resp. $\FGps{k} \to \alg_k^{\op}$. Then $\Fr$ factors through $\pol$:
\[
\begin{tikzcd}
\Alg_k^{\op} \arrow[r,"\Fr"] \ar[dr,swap,"\pol"] & \AbSch{k}^p;\\
& \Pol_p(k)^{\op} \ar[u,"\tilde \Fr"]\\
\end{tikzcd}
\qquad
\begin{tikzcd}
\alg_k^{\op} \arrow[r,"\Fr"] \ar[dr,swap,"\pol"] & \FGps{k}\\
& \pol_p(k). \ar[u,"\tilde \Fr"]\\
\end{tikzcd}
\]
\end{thm}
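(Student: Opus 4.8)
The plan is to reconstruct $\Fr(A)$ from Witt-vector data attached to $A$ and to check that, in the graded setting, all of this data depends on $A$ only through $\pol_p(A)$; I will carry out the affine case, the formal case running in parallel with Cartier's theory of curves in place of Dieudonné theory and $\pol_p(k)$ in place of $\Pol_p(k)$. The first step is bookkeeping. Over the perfect graded field $k$, every affine $p$-adic abelian group scheme is a cofiltered limit of finite ones, and by the graded form of Dieudonné theory it embeds into a product of standard group schemes: truncated Witt schemes $W_n\langle d\rangle$ with internal degree shift $d$, their Frobenius twists, and the degree-zero schemes $\mu_{p^m}$ and $\mathbb Z/p^m$. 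Hence $\Fr(A)$ is determined by the functors $G\mapsto\Hom_{\AbSch{k}^p}(\Fr(A),G)=\Hom_{\mathrm{grSch}}(\Spec A,G)$ for standard $G$, together with the co-operations among them. Here the grading is what makes these Hom-sets tractable: a graded-scheme morphism $\Spec A\to\mathbb G_a\langle d\rangle$ is merely an element of $A_d$; one to $W_n\langle d\rangle$ is a length-$n$ graded $p$-typical Witt vector $(a_0,\dots,a_{n-1})$ with $a_i\in A_{p^i d}$; and morphisms to the degree-zero schemes $\mu_{p^m}$ and $\mathbb Z/p^m$ only involve $A_0$, so their analysis is precisely the ungraded one of \cite{bauer:p-polar}.

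The heart of the argument is that this Witt data is polar. As a graded $k$-module, $W_n\langle d\rangle(A)=\bigoplus_{i<n}A_{p^i d}$ depends only on the underlying module of $A$ and so is retained by $\pol_p$. The group law on $W_n\langle d\rangle$ is given by the universal Witt addition polynomials $S_0,\dots,S_{n-1}$, and over a base of characteristic $p$ these are isobaric of weight $p^m$ once $x_i$ is assigned weight $p^i$; the key point — and the place where the grading is genuinely used — is that each monomial of such a polynomial is either linear in one of the $x_i$, hence visible on the underlying module, or can be exhibited as an iterate of the $p$-fold multiplication $\mu$: its factors have weights that are powers of $p$ summing to a power of $p$, so they can be split repeatedly into $p$ blocks of equal weight, and the corresponding nested applications of $\mu$ compute the monomial in the correct internal degree. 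The same analysis handles the Frobenius operator $F$ and the $p$-fold power maps, while the Verschiebung $V$ and the truncations $W_{n+1}\to W_n$ are $k_0$-linear. Combined with the ungraded input for the degree-zero part, this shows the entire standard diagram factors through $\pol_p$.

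Reassembling yields a functor $\tilde\Fr\colon\Pol_p(k)^{\op}\to\AbSch{k}^p$ with $\tilde\Fr\circ\pol_p^{\op}\cong\Fr$, and since every construction in sight preserves finite-dimensionality the formal statement follows by restriction. The main obstacle is carrying out the first step rigorously: $\Fr(A)$ is only pro-representable of infinite type, so ``reconstruct from the standard diagram'' must become an actual construction — for a polarized algebra $P$ one builds a candidate functor on $\AbSch{k}^p$ as a suitable limit over the standard diagram of the polar Witt data of $P$, proves it corepresentable (this is $\tilde\Fr(P)$), and identifies its value at $\pol_p(A)$ with $\Hom_{\mathrm{grSch}}(\Spec A,-)$. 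Controlling the non-unipotent and étale contributions, together with the coherence of all the Frobenius twists and internal-degree shifts, is where the real technical weight lies.
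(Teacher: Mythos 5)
Your central combinatorial observation is correct and is the same one the paper relies on: the Witt structure polynomials are isobaric of weight $p^m$ when $x_i$ is given weight $p^i$, and every non-linear monomial of such a polynomial can be evaluated by nested applications of the $p$-fold product $\mu$ (this is the content of Prop.~\ref{prop:imageofiotacycliccase}, and it is why $W_n$ and $CW$ extend to graded $p$-polar algebras). The genuine gap is the step you yourself flag as ``the real technical weight'': the claim that $\Fr(A)$ can be reconstructed, functorially and together with its co-operations, from the diagram $G\mapsto\Hom(\Spec A,G)$ over standard group schemes. That reconstruction \emph{is} graded Dieudonn\'e theory, and you neither prove it nor cite a graded version of it; the ungraded statements do not apply verbatim because the degree shifts $M\mapsto M(1)$ and the Frobenius twists change the module categories involved. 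The paper spends its entire Section~4 on exactly this point: it establishes the equivalences $\DieuFor$ (Thm.~\ref{thm:generaldieudonne}) and $D$ (Thm.~\ref{thm:affinedieudonne}) by a Galois-descent reduction to the ungraded case, computes $\DieuFor(\Fr(A))=\Hom(\Spec A,CW_k)=CW(A)$ (Thm.~\ref{thm:dieudonneoffree}), observes that the right-hand side is manifestly defined on $p$-polar algebras, and then inverts the equivalence. Until you supply (or correctly quote) such an equivalence, your argument only shows that certain invariants of $\Fr(A)$ factor through $\pol$, not that $\Fr(A)$ itself does.

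Two further points. First, for $p>2$ your framework silently breaks in odd degrees: graded-commutativity forces $2a^2=0$ for $|a|$ odd, there is no usable Witt functor there, and ``$W_n\langle d\rangle$'' for odd $d$ is not among your standard objects in any workable form. The paper sidesteps this by splitting every bicommutative Hopf algebra into an even and an odd part; the odd part is an exterior algebra on primitives and factors through the underlying $k$-module, hence through $\pol$ trivially (Prop.~\ref{prop:oddfree}). Your proof needs this reduction before any Witt-vector argument can start. Second, your dismissal of the degree-zero, multiplicative and \'etale contributions as ``precisely the ungraded analysis'' still requires the Galois-equivariant identification of \'etale formal groups with Galois modules and the observation that $\Hom_{\Alg_k}(A,\bar k)$ depends only on $\pol(A)$; this is short but not automatic, and it is carried out in Lemma~\ref{lemma:etalefgpsaregammamodules} and in the proofs of the formal and affine cases of Thm.~\ref{thm:freeschemefactorization}.
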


\begin{corollary}\label{thm:affineschemefactorization}
Let $k$ be a perfect field of characteristic $p$ and $M \in \AbSch{k}^p$ or $M \in \FGps{k}$. Then $M$ factors uniquely through $\pol$:
\[
\begin{tikzcd}
\Alg_k \arrow[r,"M"] \ar[dr,swap,"\pol"] & \{\text{abelian pro-$p$-groups}\};\\
& \Pol_p(k) \ar[u,"\tilde M"]\\
\end{tikzcd}
\qquad
\begin{tikzcd}
\alg_k \arrow[r,"M"] \ar[dr,swap,"\pol"] & \Ab\\
& \pol_p(k). \ar[u,"\tilde M"]\\
\end{tikzcd}
\]
\end{corollary}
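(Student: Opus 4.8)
The plan is to deduce this formally from Theorem~\ref{thm:freeschemefactorization} by substituting the factorization $\Fr = \tilde\Fr \circ \pol$ into the adjunction that defines $\Fr$; this is the graded analogue of the deduction carried out in \cite{bauer:p-polar}. The starting point is the Yoneda-type identity: for $M \in \AbSch{k}^p$ and $R \in \Alg_k$ (respectively $M \in \FGps{k}$ and $R \in \alg_k$) there is an isomorphism of abelian groups, natural in $R$,
\[
M(R)\;\cong\;\Hom_{\AbSch{k}^p}\bigl(\Fr(R),M\bigr)\qquad\bigl(\text{resp. }\Hom_{\FGps{k}}\bigl(\Fr(R),M\bigr)\bigr).
\]
This comes from unwinding the adjunction $\Fr\dashv(\text{forgetful})$ together with the fact that $M$ is (co)represented by its coordinate ring $\mathcal{O}(M)$, so that the right-hand side is $\Hom_{\Alg_k}\bigl(\mathcal{O}(M),R\bigr)=M(R)$, compatibly with the abelian group structures.

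Feeding in $\Fr(R)=\tilde\Fr\bigl(\pol(R)\bigr)$ from Theorem~\ref{thm:freeschemefactorization} yields $M(R)\cong\Hom_{\AbSch{k}^p}\bigl(\tilde\Fr(\pol R),M\bigr)$, whose value depends on $R$ only through $\pol(R)$, and functorially so. One therefore \emph{defines}
\[
\tilde M\;:=\;\Hom_{\AbSch{k}^p}\bigl(\tilde\Fr(-),M\bigr)\colon\Pol_p(k)\longrightarrow\Ab
\]
(and similarly with $\FGps{k}$ in the formal case); this is covariant because $\tilde\Fr$ is contravariant, and the displayed natural isomorphism is precisely the identity $\tilde M\circ\pol=M$. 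That settles existence of the factorization.

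The remaining two points are where the content beyond the formal manipulation sits. First, $\tilde M$ must take values in the prescribed target. In the formal case the target is just $\Ab$ and $\Hom$ in the additive category $\FGps{k}$ is automatically an abelian group, so nothing is needed. In the affine case one must check that $\Hom_{\AbSch{k}^p}\bigl(\tilde\Fr(A),M\bigr)$ is naturally a topological abelian pro-$p$-group for \emph{every} $A$, not only for $A$ in the image of $\pol$; since $\tilde\Fr(A)$ is again an object of $\AbSch{k}^p$, this amounts to showing that $\Hom$-groups in $\AbSch{k}^p$ carry a natural pro-$p$ topology, which one sees by writing $M$ as a cofiltered limit of affine $p$-group schemes of finite order and passing to the limit, exactly as in the ungraded treatment of \cite{bauer:p-polar}. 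Second, uniqueness: any functor $N$ with $N\circ\pol=M$ agrees with $\tilde M$ on the image of $\pol$ by the isomorphism above, and one argues as in \cite{bauer:p-polar} that this forces $N=\tilde M$ on all of $\Pol_p(k)$, using that every graded $p$-polar algebra is by construction a subobject of a polarization. I expect this last bundle — nailing down the pro-$p$ topology on the $\Hom$-groups and the uniqueness clause — to be the only genuine obstacle; the existence of the factorization itself is immediate once Theorem~\ref{thm:freeschemefactorization} is in hand.
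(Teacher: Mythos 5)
Your proposal is correct and follows exactly the paper's own (very terse) argument: write $M(R)=\Hom(\Fr(R),M)$ via the adjunction and substitute the factorization $\Fr=\tilde\Fr\circ\pol$ from Theorem~\ref{thm:freeschemefactorization} to define $\tilde M=\Hom(\tilde\Fr(-),M)$. The extra care you take with the pro-$p$ topology on the $\Hom$-groups and with uniqueness is sound and goes beyond what the paper bothers to spell out, but it does not change the route.
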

\begin{proof}
Given any $M \in \AbSch{k}$ or $M \in \FGps{k}$, we have that
\[
M(R) = \Hom(\Spec R, M) = \Hom(\Fr(R),M),
\]
where the last Hom group is of objects of $\AbSch{k}^p$ or $\FGps{k}$, respectively. The Corollary follows from Thm.~\ref{thm:freeschemefactorization}.
\end{proof}

To prove Thm.~\ref{thm:freeschemefactorization}, we define graded $p$-typical Witt vectors $W(R)$ and co-Witt vectors $CW(R)$ for \emph{evenly graded} $p$-polar $k$-algebras $R$ and Dieudonné equivalences
\[
D\colon \AbSch{k}^p \to \DMod{k}^p
\]
and
\[
\DieuFor\colon \FGps{k}^p \to \DDModF{k}
\]
with certain categories of $W(k)$-modules with Frobenius and Verschiebung operations. Here $\FGps{k}^p$ denotes the full subcategory of $\FGps{k}$ of functors taking values in abelian $p$-groups.

We prove:
\begin{thm} \label{thm:dieudonneoffree}
Let $R \in \Alg_k$ be an evenly graded commutative $k$-algebra when $p>2$, or a 
graded, commutative $k$-algebra when $p=2$.

There are natural isomorphisms
\[
\DieuFor(\Fr(R)) \cong CW(R) \quad \text{for finite-dimensional $R$}
\]
and
\[
D(\Fr(R)) \cong CW^u(R) \oplus \left(\mu_{p^\infty}(R\otimes_k \bar k) \otimes W(\bar k)\right)^{\Gal(k)} \quad \text{for $R \in \Alg_k$,}
\]
where in the last factor, $\mu_{p^\infty}(A)$ denotes the $p$-power torsion in $A^\times$, invariants of the absolute Galois group $\Gal(k)$ acting diagonally on $\bar k$ and $W(\bar k)$ are taken, and $CW^u$ denotes the unipotent part of $CW$.
\end{thm}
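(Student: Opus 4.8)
The plan is to compute the Dieudonné module of $\Fr(R)$ by exploiting the adjunction defining $\Fr$ together with the Dieudonné equivalences $\DieuFor$ and $D$. First I would record that, because $D$ (resp.\ $\DieuFor$) is an equivalence, the left adjoint $\Fr$ is transported to a left adjoint on the module side: for finite-dimensional $R$, $\DieuFor(\Fr(R))$ corepresents the functor $M \mapsto \Hom_{\DDModF{k}}(\DieuFor(\Fr(R)), M) \cong \Hom_{\FGps{k}^p}(\Fr(R), \DieuFor^{-1}(M)) \cong \Hom_{\alg_k}(\DieuFor^{-1}(M)\text{-underlying algebra}, R)$, so it suffices to identify which $W(k)$-module-with-$F,V$ has the property that maps out of it compute "points of the represented algebra in $R$." The claim is that this object is $CW(R)$, the graded $p$-typical co-Witt vectors; so the core of the argument is to show that $CW(R)$, regarded through the Dieudonné equivalence, represents the same functor — equivalently, to produce a natural isomorphism $\Hom(CW(R), M) \cong (\text{the coalgebra/algebra underlying } \DieuFor^{-1}(M))(R)$. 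I would do this by first treating $R = k[x]/x^n$ with $x$ in even degree (or any degree when $p=2$) — the polar affine line case — where $\Fr(R)$ should be a truncated Witt scheme and the computation is explicit, and then extending to general finite-dimensional $R$ by writing $R$ as a colimit (coequalizer of free polar algebras) and using that $\Fr$, $CW$ and $D$ all commute with the relevant colimits.

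Next, for the second isomorphism, dealing with all of $\Alg_k$ (not just finite-dimensional $R$), I would pass to the pro-completion: $\Fr(R)$ for infinite-dimensional $R$ is a cofiltered limit of the $\Fr$ of its finite-dimensional quotient polar algebras, and $D$ sends such limits to the appropriate (co)limits of Dieudonné modules. The new feature is the splitting $D(\Fr(R)) \cong CW^u(R) \oplus (\mu_{p^\infty}(R\otimes_k\bar k)\otimes W(\bar k))^{\Gal(k)}$. This reflects the standard decomposition of an affine abelian $p$-group over a perfect field into its unipotent part and its multiplicative (diagonalizable) part. I would argue: the unipotent quotient of $\Fr(R)$ is the "free unipotent $p$-adic group," whose Dieudonné module is computed exactly as in the formal case, giving the $CW^u(R)$ summand; the multiplicative part of $\Fr(R)$ is the free pro-multiplicative abelian $p$-group on $\Spec R$, which by Cartier duality corresponds to the free profinite abelian $p$-group on the set-valued functor $R \mapsto$ (something), and unwinding this over $\bar k$ with descent gives $(\mu_{p^\infty}(R\otimes_k\bar k)\otimes W(\bar k))^{\Gal(k)}$. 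Concretely, $\Hom_{\AbSch{k}^p}(\Fr(R), \mathbb{G}_m[p^\infty]) \cong \mu_{p^\infty}(R)$, and base-changing to $\bar k$ kills the Galois action, so the multiplicative Dieudonné module over $\bar k$ is $\mu_{p^\infty}(R\otimes_k\bar k)\otimes W(\bar k)$ as a $W(\bar k)$-module with its natural $F, V$; Galois descent for Dieudonné modules over perfect fields then gives the invariants.

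The main obstacle I anticipate is twofold. First, the graded bookkeeping: the multiplicative part has to interact correctly with the grading (the element $u$ of positive degree when $k = k_0[u,u^{-1}]$), and the perfectness hypothesis $p\nmid d$ is exactly what makes Frobenius behave well on the graded structure — I would need to check that $\mu_{p^\infty}(R\otimes_k\bar k)$ carries the grading compatibly with the $W(\bar k)$-action so that the tensor product is the right graded Dieudonné module. Second, and more seriously, proving that $CW(R)$ genuinely corepresents $\Fr$ after Dieudonné — i.e.\ the finite-dimensional isomorphism — requires knowing the structure of $\Fr(R)$ well enough, which realistically means reducing to the polar affine line and invoking (or proving) that $\Fr$ of the polar affine line is the Witt scheme; this is presumably where the bulk of the work in the body of the paper goes, and everything else (colimit extension, pro-completion, unipotent/multiplicative splitting, Galois descent) is comparatively formal. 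I would therefore structure the proof as: (1) the polar affine line computation; (2) colimit extension to finite-dimensional $R$, yielding the $\DieuFor$ statement; (3) pro-completion and the unipotent/multiplicative decomposition, yielding the $D$ statement; (4) Galois descent to pin down the multiplicative summand over $k$.
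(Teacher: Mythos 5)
Your overall architecture (adjunction, Dieudonné equivalence, unipotent/multiplicative splitting, Galois descent) is in the right spirit, but the route you propose for the key step --- the finite-dimensional isomorphism $\DieuFor(\Fr(R)) \cong CW(R)$ --- has a genuine gap. You set out to show that $CW(R)$ corepresents the functor $M \mapsto (\DieuFor^{-1}M)(R)$ by an explicit computation starting from $R = k[x]/x^n$ and extending by colimits. Two problems. First, the anchor claim is false: $\Fr(k[x]/x^n)$ is \emph{not} a truncated Witt scheme. The Witt scheme is free on the $p$-\emph{polar} affine line (that is Theorem~\ref{thm:lambdapcofree}), which is a much smaller object than the affine line; the Dieudonné module of $\Fr(k[x]/x^n)$ is all of $CW(k[x]/x^n)$, which is strictly larger than any $W_m(k)$. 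Second, verifying the corepresentability statement ``by hand'' is, by Yoneda, literally equivalent to the isomorphism you are trying to prove, so without an independent handle on $\DieuFor$ the argument is circular. The independent handle is already available in the paper: Theorem~\ref{thm:generaldieudonne} states that the equivalence $\DieuFor$ is \emph{represented by the formal group $CW_k$}, i.e.\ $\DieuFor(G) = \Hom_{\FGps{k}}(G, CW_k)$. With that, the finite-dimensional case is a one-line adjunction computation,
\[
\DieuFor(\Fr(R)) = \Hom_{\FGps{k}}(\Fr(R),CW_k) = \Hom_{\FSch{k}}(\Spf R, CW_k) = CW(R),
\]
and no computation on truncated polynomial rings (nor any colimit extension, which would anyway be awkward since free objects in $\alg_k$ are not finite-dimensional) is needed.

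For the second isomorphism your sketch is closer to what actually happens, but the paper takes a shortcut you don't: rather than redoing the unipotent/multiplicative analysis in the graded setting, it shows that both sides of the asserted isomorphism are natural transformations of Galois descent categories (using Lemma~\ref{lemma:freeandbasechange} and Theorem~\ref{thm:affinedieudonne} for the left side, and near-tautological computations for $CW^u$ and the $\mu_{p^\infty}$ factor on the right), reduces by descent to $k=k_0$ or to $|u|=2$, and then quotes the ungraded theorem from the prequel via the comparison of graded and ungraded Dieudonné functors. Your descent argument for the multiplicative summand and the identification $\Hom_{\AbSch{k}}(\Fr(R),\mathbb G_m)\cong R^\times$ are essentially the content of that ungraded input, so this part of your plan is viable; but as written it re-derives the prequel rather than reducing to it, and it still depends on the formal-case isomorphism whose proof is the gap identified above.
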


Because the right-hand side is defined on $p$-polar algebras, so is the left hand side and hence, since $D$ is an equivalence, also $\Fr(R)$. This is the main ingredient in proving Thm.~\ref{thm:freeschemefactorization}. 

As an application, we show:
\begin{thm}\label{thm:lambdapcofree}
The affine group scheme of $p$-typical Witt vectors is the free unipotent abelian group scheme on the $p$-polar affine line, i.e. on the free $p$-polar algebra on a single generator. 
\end{thm}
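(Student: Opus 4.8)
The plan is to reduce the statement to a computation with the Dieudonné equivalence $D$, using the factorization of $\Fr$ (Theorem~\ref{thm:freeschemefactorization}) and the description of $D(\Fr(R))$ (Theorem~\ref{thm:dieudonneoffree}). Write $P_1$ for the free $p$-polar $k$-algebra on a single generator $t$, placed (when $p>2$) in a fixed positive even degree $d$; concretely $P_1\cong\bigoplus_{i\ge0}k\,t^{p^i}$ inside $\pol_p(k[t])$, with $\mu(t^{p^i},\dots,t^{p^i})=t^{p^{i+1}}$, and the ``$p$-polar affine line'' is $\Spec P_1$ — note that $P_1$ is genuinely $p$-polar, i.e.\ it is not $\pol_p$ of any graded $k$-algebra, since any such algebra would be forced to have vanishing (or non-associative) multiplication. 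By Theorem~\ref{thm:freeschemefactorization} the free abelian $p$-adic group scheme on $\Spec P_1$ is $\tilde\Fr(P_1)$; since a homomorphism of abelian group schemes into a unipotent target factors through its unipotent part, the free \emph{unipotent} abelian group scheme on $\Spec P_1$ is the unipotent summand $\tilde\Fr(P_1)^{u}$. So the assertion is $W\cong\tilde\Fr(P_1)^{u}$, and because $D$ is an equivalence it suffices to match the two sides after applying $D$.

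First I would write down the comparison morphism: the element $(t,t^{p},t^{p^{2}},\dots)\in W(P_1)$ — equivalently, the $p$-polar algebra homomorphism $\pol_p(\mathcal O_W)\to P_1$ sending the $i$-th Witt coordinate to $t^{p^i}$ — is legitimate because the Witt addition and multiplication polynomials are homogeneous of weight $p^{n}$ in the Witt coordinates weighted by powers of $p$, hence are built from iterated $p$-fold products alone, which is exactly the partial multiplication surviving in $P_1$. (The Teichmüller lift $[t]=(t,0,0,\dots)$ would serve equally well; the correct universal choice is to be pinned down in the proof.) By the universal property of $\tilde\Fr(P_1)^{u}$ this yields a homomorphism $u\colon\tilde\Fr(P_1)^{u}\to W$, and it remains to show that $D(u)$ is an isomorphism.

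To compute $D(\tilde\Fr(P_1)^{u})$ I would invoke Theorem~\ref{thm:dieudonneoffree}: for $R\in\Alg_k$ it gives $D(\tilde\Fr(\pol_pR))=D(\Fr(R))\cong CW^{u}(R)\oplus(\text{multiplicative summand})$, the last being controlled by $p$-power torsion of units. Here that summand vanishes, e.g.\ because $P_1\hookrightarrow\pol_p(k[t])$ while $\Fr(k[t])$ is already unipotent — as $\mu_{p^\infty}(k[t]\otimes_k\bar k)=\mu_{p^\infty}(\bar k[t])=0$ in characteristic $p$ — so $\tilde\Fr(P_1)$ inherits unipotence. Since $CW^{u}$ is itself a functor on $p$-polar algebras, the real work is to promote the isomorphism $D(\tilde\Fr(-)^{u})\cong CW^{u}(-)$ from $p$-polar algebras of the form $\pol_pR$ to the genuinely $p$-polar $P_1$; I would do this by exhibiting $P_1$ as a suitable (co)limit in $\Pol_p(k)$ of algebras $\pol_pR$ preserved by both functors — or else by a direct naturality argument — giving $D(\tilde\Fr(P_1)^{u})\cong CW^{u}(P_1)$.

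Finally I would identify $CW^{u}(P_1)$ with the Dieudonné module $D(W)$ of the Witt-vector group scheme, compatibly with $D(u)$: because $P_1$ retains only the ``$p$-power'' strand of the multiplication on $k[t]$, the unipotent co-Witt vectors $CW^{u}(P_1)$ collapse, slot by slot, to precisely the $W(k)$-module underlying $D(W)$ together with its Frobenius and Verschiebung, and one checks that $D(u)$ realizes this identification; then $u$ is an isomorphism and the theorem follows. I expect the main obstacle to be exactly this pair of steps — extending Theorem~\ref{thm:dieudonneoffree} off the image of $\pol_p$ to the genuinely $p$-polar $P_1$, and the explicit bookkeeping matching $CW^{u}(P_1)$ with $D(W)$, where the weights $p^i$ of the Witt coordinates and the precise $F$- and $V$-actions all have to line up. The rest is formal.
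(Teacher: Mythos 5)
Your strategy is sound but takes a genuinely different route from the paper. The paper's proof never returns to Dieudonn\'e modules: it works entirely on the Hopf-algebra side, using the adjunction $\Cof^u \dashv V$ of Lemma~\ref{lemma:affinefreeadjoint} and the fact (Lemma~\ref{lemma:cofreeiscofreeascoalg}) that $\Cof^u(A)$ is the cofree conilpotent coalgebra on the underlying vector space of $A$. Concretely, it computes $V(\Lambda_p)=\pol_{(j)}(\Lambda_p)=k\langle\theta_{j,i}^{p^k}\rangle$ and $P(\Lambda_p)=k\langle\theta_{j,0}^{p^k}\rangle$ (the free $p$-polar algebra on one generator), observes that $P(\Lambda_p)$ is a retract of $V(\Lambda_p)$ as a $p$-polar algebra, and shows the resulting Hopf map $q\colon\Lambda_p\to\Cof^u(P\Lambda_p)$ is an isomorphism because it is an isomorphism on primitives (hence injective, unipotence) and the two sides have equal dimensions. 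This buys a short, duality-free argument. Your route instead passes through $D$, and two remarks are in order. First, the step you flag as ``the real work'' --- extending $D(\tilde\Fr(-)^u)\cong CW^u(-)$ from algebras to genuinely $p$-polar inputs like $P_1$ --- is actually free: $\tilde\Fr$ is \emph{constructed} in the proof of Theorem~\ref{thm:freeschemefactorization} as $D^{-1}$ applied to $CW^u(-)\oplus(\cdots)$, so the identity $D(\tilde\Fr(A)^u)\cong CW^u(A)$ holds by definition for every $A\in\Pol_p(k)$; no colimit approximation is needed. Second, the step you call ``bookkeeping'' is where the real remaining work sits: you must compute $D(W)$ \emph{independently} (the paper never does this --- it would follow from the theorem you are proving, so quoting $CW^u(P_1)\cong D(W)$ without an external computation is circular). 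The input $CW^u(P_1)_{jp^i}\cong W_i(k)$ with $V$ the restriction and $F$ multiplication by $p$ is supplied by the paper's example in the Witt-vector section, but the matching side $D(W)_{jp^i}$ has to be extracted from the graded version of $D(W_n)\cong W_n(k)$, keeping careful track of the Cartier and Matlis dualities in the definition of $D$ and of the degree shifts in $F$ and $V$; only then can you verify that $D(u)$, for your comparison map $u$ induced by $(t,t^p,t^{p^2},\dots)\in W(P_1)_j$, is an isomorphism. With that computation carried out your argument closes, but it is substantially heavier than the paper's primitives-plus-dimension-count.
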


This is an analog of the fact that the Hopf algebra representing the big Witt vectors, the algebra $\Lambda$ of symmetric functions, is cofree on a polynomial ring in one generator \cite{hazewinkel:cofree-coalgebras}. The corresponding Hopf algebra $\Lambda_p$ for $p$-typical Witt vectors is definitely not cofree on a (non-polar) algebra.

\section{\texorpdfstring{Graded $p$-polar algebras}{Graded p-polar algebras}}

We begin the study of $p$-polar $k$-algebras with some observations and examples.

\begin{remark}
A graded $p$-polar $k$-algebra $A$ does not supply a map $A^{\otimes_k p} \to A$ -- only elements of the same degree can be multiplied together. In particular, a graded $p$-polar $k$-algebra is not a $p$-polar $k$-algebra when one forgets the grading.
\end{remark}

\begin{remark}
The embeddability $i\colon A \to B$ into a graded commutative algebra can be thought of as saying that for any elements $x_1,\dots,x_n \in A$ and scalars $\lambda_1,\dots,\lambda_m \in k$, there is at most one way of multiplying them together using $\mu$ (up to sign); namely, the element $\lambda_1\cdots \lambda_k i(x_1) \cdots i(x_n) \in B$, which is either in the image of $i$ or it isn't. 
\end{remark}

\begin{example}
If $k=k_0$, the submodule $k\langle x^{p^i} \mid i \geq 0\rangle \subset k[x]$ is a sub-$p$-polar algebra of $\pol(k[x])$, where $|x|>0$. It is the free $p$-polar algebra on a generator $x$. This shows that in contrast to the ungraded case, even for $p=2$, $p$-polar algebras are much weaker structure than actual algebras.
\end{example}

\begin{remark}\label{remark:ptypicalsplitting}
If $k=k_0$, it is apparent that for any $p$-polar algebra $A$, the submodule $A_{(j)} = \bigoplus_{i \geq 0} A_{jp^i}$ is a polar subalgebra and direct factor, and that
\[
A \cong A_0 \times \prod_{p \nmid j} A_{(j)}.
\]

If $k = k_0[u,u^{-1}]$ with $|u| = d>0$ and $p \nmid d$, we see instead that
\[
A_{(j)} = \sum_{i \geq 0, l \in \Z} A_{jp^i+dl}
\]
is a polar subalgebra and direct factor, and that
\[
A \cong \prod_j A_{(j)},
\]
where $j$ runs through the residue classes of $\{jp^i \mid i \geq 0\}$ in $\Z/d\Z$.

We call a $p$-polar algebra of this form $A_{(j)}$ a \emph{$p$-typical} polar algebra of degree~$j$. The inclusion of such $p$-polar algebras into all $p$-polar algebras is biadjoint to the functor $A \mapsto A_{(j)}$. We write $\pol_{(j)}(A) := (\pol(A))_{(j)}$. This is a sub-$p$-polar algebra, but not a subalgebra of $A$.

In particular, if $p>2$, we see that every graded $p$-polar $k$-algebra splits as a product $A=A_{\odd} \times A_{\ev}$, where $A_{\odd} = \bigoplus_n A_{2n+1}$ and $A_{\ev}$ the obvious complement.
\end{remark}

\begin{example}
Consider the stable splitting $P=\Sigma^\infty (\CP^\infty)\hat{{}_p} \simeq P_1 \vee \cdots \vee P_{p-1}$ of the $p$-completion of complex projective space \cite{mcgibbon:rank-1-loops} with
\[
H^*(P_i) = \langle x^j \mid j \equiv i \pmod{p-1}\rangle < \F_p[x] = H^*(\CP^\infty,\F_p).
\]
By \cite{sullivan:genetics}, $P_{p-1}$ is the suspension spectrum of a space (the classifying space of the Sullivan sphere), but all other $P_i$ are not.
However, the maps
\[
P_i \hookrightarrow P = \Sigma^\infty (\CP^\infty)\hat{{}_p} \xrightarrow{\Sigma^\infty \Delta} \Sigma^\infty ((\CP^\infty)\hat{{}_p})^p \simeq P \wedge \cdots \wedge P \twoheadrightarrow P_i \wedge \cdots \wedge P_i
\]
induces a $p$-polar algebra structure on $\tilde H^*(P_i)$, and the splitting $P\simeq P_1 \vee \cdots \vee P_{p-1}$ induces a splitting of $p$-polar algebras in cohomology. In fact,
\[
H^*(P_i;\F_p) \cong \bigoplus_{\substack{j \equiv i \;\;(p-1)\\p \nmid j}} \pol_{(j)} H^*(P;\F_p)
\]
So while the $P_i$ are not spaces for $i \neq p-1$, they do retain some likeness to spaces in that their cohomologies are $p$-polar algebras. This raises the question whether there is a reasonable notion of a ``$p$-polar space'' somewhere between connective spectra and spaces.
\end{example}

In a way, the definition of a $p$-polar $k$-algebra is wrong in the same way the definition of a manifold as a submanifold of $\mathbf R^n$ is wrong; it mentions an enveloping object which is not part of the data. The following proposition remedies this to a certain extent:

\begin{prop}
The functor $\pol\colon \Alg_k \to M_k$ has a left adjoint given for $A \in M_k$ by
\[
A \mapsto \hull(A) = \Sym(A)/(x_1\cdots x_p - \mu(x_1,\dots,x_p) \mid x_1,\dots,x_p \in A_i).
\]
An object $A \in M_k$ is a $p$-polar $k$-algebra iff the unit map of this adjunction, $u\colon A \to \pol(\hull(A))$, is injective.
\end{prop}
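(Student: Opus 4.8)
The proposition has two parts: first, that $\hull$ as defined is left adjoint to $\pol\colon \Alg_k \to M_k$; second, the characterization of $p$-polar algebras via injectivity of the unit.

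**The adjunction.** The plan is to verify the universal property directly. Given $A \in M_k$ and a graded commutative $k$-algebra $B$, a morphism $A \to \pol(B)$ in $M_k$ is a $k_0$-linear, degree-preserving map $f\colon A \to B$ such that $f(\mu(x_1,\dots,x_p)) = f(x_1)\cdots f(x_p)$ whenever $x_1,\dots,x_p$ lie in a common degree $A_i$. I would first recall that $\Sym(A)$ (the free graded commutative $k$-algebra on the underlying graded $k$-module of $A$, with appropriate sign conventions when $p=2$ and in odd degrees) corepresents $k$-module maps $A \to \pol(B)$ forgetting $\mu$; this is the standard free-forgetful adjunction between $\Alg_k$ and graded $k$-modules. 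Then a $k$-module map $A \to B$ factors through the quotient $\hull(A) = \Sym(A)/(x_1\cdots x_p - \mu(x_1,\dots,x_p))$ precisely when the induced algebra map $\Sym(A) \to B$ kills every generator $x_1\cdots x_p - \mu(x_1,\dots,x_p)$, i.e. precisely when $f(x_1)\cdots f(x_p) = f(\mu(x_1,\dots,x_p))$ for all $x_1,\dots,x_p$ of equal degree — which is exactly the condition that $f$ be an $M_k$-morphism. This gives a natural bijection $\Hom_{\Alg_k}(\hull(A),B) \cong \Hom_{M_k}(A,\pol(B))$, establishing the adjunction. A small point to check is that the ideal is homogeneous and that the multilinearity of $\mu$ makes the quotient well-behaved, but this is routine.

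**The characterization.** For the second claim, suppose first that the unit $u\colon A \to \pol(\hull(A))$ is injective. Then $A$ is a subobject of $\pol(B)$ for $B = \hull(A) \in \Alg_k$, so $A$ is a $p$-polar $k$-algebra by definition. Conversely, suppose $A$ is $p$-polar, so there is an algebra $B$ and a monomorphism $i\colon A \hookrightarrow \pol(B)$ in $M_k$. By the adjunction, $i$ corresponds to an algebra map $\bar\imath\colon \hull(A) \to B$, and by naturality of the unit we have a commuting triangle $i = \pol(\bar\imath) \circ u$. Since $i$ is injective, $u$ must be injective as well. Thus injectivity of $u$ is equivalent to $A$ being $p$-polar.

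**Main obstacle.** The conceptual content is light; the only genuine care needed is in setting up $\Sym$ and the sign conventions in the graded (and especially $p=2$) setting so that $\Sym(A)$ really is the free graded commutative $k$-algebra on $A$ as a graded $k$-module, and so that the relation ideal $(x_1\cdots x_p - \mu(x_1,\dots,x_p))$ is homogeneous and compatible with the $k_0$-multilinearity and symmetry of $\mu$. One should also note $\mu$ is only $k_0$-multilinear, not $k$-multilinear, so strictly $\Sym(A)$ should be the free algebra on $A$ viewed appropriately — but since $A$ is a $k$-module and $\Sym$ is taken over $k$, the relations automatically absorb the $k$-action correctly, and the scalar $\lambda_1\cdots\lambda_m$ bookkeeping from the earlier remark is exactly what makes this consistent. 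Once these conventions are pinned down, both halves are formal consequences of the free-forgetful adjunction and the universal property of a quotient ring.
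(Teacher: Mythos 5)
Your proof is correct and follows essentially the same route as the paper: the adjunction is the standard free--forgetful-plus-quotient argument (which the paper simply declares obvious), and the characterization is exactly the paper's factorization of a monomorphism $i\colon A \hookrightarrow \pol(B)$ through the unit $u$ via the universal property, forcing $u$ to be injective. No gaps.
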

\begin{proof}
The existence and structure of the left adjoint, $\hull$, is obvious.

If $u$ is injective, $A$ is a $p$-polar algebra by definition. Conversely, if $A$ is $p$-polar, say $i\colon A \hookrightarrow \pol(B)$ for some $B \in \Alg_k$, then by the universal property of the left adjoint, there is a factorization
\[
\begin{tikzcd}
A \arrow[dr,hook,"i",swap] \arrow[r,"u"] & \pol(\hull(A)) \arrow[d,dashed]\\
& \pol(B).
\end{tikzcd}
\]
Since $i$ is injective, so is $u$.
\end{proof}

It is possible to give a list of axioms for objects of $M_k$ to be a $p$-polar algebra, but that list becomes quite unwieldy in the general case. We will only do this in important special cases.

\begin{lemma}\label{lemma:olddef}
If $k=k_0$ and $A=A_0$ (i.e. in the ungraded case), the definition of a $p$-polar $k$-algebra agrees with the one given in \cite{bauer:p-polar}; i.e., $A \in M_k$ is $p$-polar iff 
\begin{description}
	\item[(ASSOC)]  \label{lemma:olddef:homogeneousassoc}
For the symmetric group $\Sigma_{2p-1}$ permuting the elements $x_1,\dots,x_p$, $y_2,\dots,y_p \in A$,
\[
\mu(\mu(x_1,\dots,x_p),y_2,\dots,y_p)
\]
is $\Sigma_{2p-1}$-invariant.
\end{description}
\end{lemma}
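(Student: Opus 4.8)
The plan is to prove the equivalence in both directions, the content lying in the implication ``(ASSOC) $\Rightarrow$ $p$-polar''. First one observes that, when $k=k_0$ and $A=A_0$, an object of $M_k$ is exactly a $k$-module equipped with a symmetric $k$-multilinear map $\mu\colon A^{\otimes p}\to A$, which is the ambient datum of \cite{bauer:p-polar}; so what must be shown is that ``$A$ is a subobject of $\pol_p(B)$ for some commutative $k$-algebra $B$'' is equivalent to (ASSOC). The forward direction is immediate: if $i\colon A\hookrightarrow\pol_p(B)$, then for $z_1,\dots,z_{2p-1}\in A$ the element $\mu(\mu(z_1,\dots,z_p),z_{p+1},\dots,z_{2p-1})$ is sent by $i$ to the product $i(z_1)\cdots i(z_{2p-1})$ in the commutative associative algebra $B$, which is invariant under any permutation of the $z_j$; since $i$ is injective, the original element is $\Sigma_{2p-1}$-invariant, which is (ASSOC).

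For the converse, assume (ASSOC). By the preceding Proposition it is enough to exhibit a $k$-linear retraction of the unit $u\colon A\to\pol(\hull(A))$. The key tool is \emph{generalized associativity}: for every integer $m\ge 1$ with $m\equiv 1\pmod{p-1}$, the $m$-fold iterated $\mu$-product should define a well-defined $k$-linear map $\mu^{[m]}\colon\Sym^m(A)\to A$ (with $\mu^{[1]}=\operatorname{id}$ and $\mu^{[p]}=\mu$). Granting this, let $\rho\colon\Sym(A)\to A$ be $\mu^{[m]}$ on $\Sym^m(A)$ when $m\ge 1$ and $m\equiv 1\pmod{p-1}$, and zero on every other homogeneous summand. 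Multiplying a generator of the defining ideal of $\hull(A)$ by a monomial $a_1\cdots a_n$ yields $a_1\cdots a_n\,x_1\cdots x_p-a_1\cdots a_n\,\mu(x_1,\dots,x_p)$, whose two terms lie in $\Sym$-degrees $n+p\equiv n+1\pmod{p-1}$; if $n\not\equiv 0\pmod{p-1}$ then $\rho$ annihilates both, while if $n\equiv 0\pmod{p-1}$ then generalized associativity identifies the two values of $\rho$ with one and the same iterated product of $a_1,\dots,a_n,x_1,\dots,x_p$ (the second term being obtained by first forming the sub-product $\mu(x_1,\dots,x_p)$). Hence $\rho$ vanishes on the ideal, descends to $\bar\rho\colon\hull(A)\to A$, and $\bar\rho\circ u=\rho|_{\Sym^1(A)}=\operatorname{id}_A$, so $u$ is injective and $A$ is $p$-polar.

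The step I expect to be the main obstacle is precisely the well-definedness of $\mu^{[m]}$, i.e. the assertion that all bracketings of an $m$-fold $\mu$-product in a fixed linear order of the inputs agree. This is the $p$-ary analogue of deriving general associativity from the binary associativity axiom. From (ASSOC) together with the symmetry of $\mu$ one first extracts the elementary ``sliding'' identity
\[
\mu\bigl(\mu(x_1,\dots,x_p),y_2,\dots,y_p\bigr)=\mu\bigl(x_1,\dots,x_{p-1},\mu(x_p,y_2,\dots,y_p)\bigr),
\]
both sides being equal, after reordering the outer $\mu$, to the symmetric function $F(v_1,\dots,v_{2p-1})=\mu(\mu(v_1,\dots,v_p),v_{p+1},\dots,v_{2p-1})$ whose symmetry \emph{is} (ASSOC); one then collapses an arbitrary iterated composite to the standard one by induction on $m$, sliding inner $\mu$'s toward the root and using the symmetry of $\mu$ to permute leaves. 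This coherence argument is in essence that of the corresponding statement in \cite{bauer:p-polar}, so one may instead simply cite it; everything else above is bookkeeping.
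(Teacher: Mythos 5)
Your proof is correct and takes essentially the same route as the paper: both directions hinge on the generalized associativity consequence of (ASSOC) established in \cite{bauer:p-polar} (all iterated $\mu$-products of $1+i(p-1)$ elements coincide), which the paper likewise simply cites. The only difference is packaging — you conclude injectivity of $u$ via an explicit linear retraction $\bar\rho\colon\hull(A)\to A$ supported on the $\Sym$-degrees $\equiv 1\pmod{p-1}$, whereas the paper directly identifies $A$ with the image of $\bigoplus_i\Sym^{1+i(p-1)}(A)$ in $\hull(A)$; your retraction is in fact a clean way of justifying the paper's implicit claim that the defining ideal meets those summands only in the expected relations.
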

\begin{proof}
Clearly, axiom (ASSOC) holds if $A$ is $p$-polar. Conversely, suppose that (ASSOC) holds, and let $i\colon A \to \hull(A)$ be the adjunction unit. In \cite{bauer:p-polar}, it was shown that (ASSOC) implies that for any $i \geq 0$ and any set of $1+i(p-1)$ elements $x_1,\dots,x_{1+i(p-1)}$, there is exactly one way of multiplying the $x_i$ together using $\mu$, and any other number of elements cannot be multiplied together. Write $\mu(x_1,\dots,x_{1+i(p-1)})$ for this unique product. Let $j\colon \bigoplus_{i=0}^\infty \Sym^{1+i(p-1)}(A) \hookrightarrow \Sym(A)$ be the inclusion and
\begin{align*}
\hull(A) \supseteq B =& \left(\bigoplus_{i=0}^\infty \Sym^{1+i(p-1)}(A)/j^{-1}((x_1\cdots x_p - \mu(x_1,\dots,x_p)))\right)\\
\cong & \left(\bigoplus_{i=0}^\infty \Sym^{1+i(p-1)}(A)\right)/\langle x_1\cdots x_{1+i(p-1)} - \mu(x_1,\dots,x_{1+i(p-1)})\rangle\\
\cong & A
\end{align*}
We have thus exhibited $A$ as a subobject of $\hull(A)$.
\end{proof}

Next, we consider the important case of a graded $p$-polar $k$-algebra over an ungraded ring $k=k_0$.

\begin{lemma}\label{lemma:nonperiodicchar}
Let $A$ be an object in $M_k$, where $k=k_0$ is ungraded. Then $A$ is a $p$-polar $k$-algebra iff
\begin{enumerate}
	\item \label{lemma:nonperiodicchar:zero} $A_0$ is a $p$-polar $k$-algebra, and
	\item \label{lemma:nonperiodicchar:assoc} For the symmetric group $\Sigma_{2p}$ permuting the elements $x_1,\dots,x_{2p} \in A_j$ and elements $y_3,\dots,y_p \in A_{pj}$,
	\[
	\mu(\mu(x_1,\dots,x_p),\mu(x_{p+1},\dots,x_{2p}),y_3,\dots,y_p)
	\]
is $\Sigma_{2p}$-invariant (up to multiplication with the sign of the permutation if $j$ is odd).
\end{enumerate}
\end{lemma}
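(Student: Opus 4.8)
The plan is to reduce the statement to a form where the characterization of $p$-polar algebras over an ungraded base that we already have — essentially the ``associativity on double products'' axiom — can be verified, while tracking how the grading over $k=k_0$ interacts with embeddability. The forward direction is immediate in both parts: if $A \hookrightarrow \pol(B)$ for some $B \in \Alg_k$, then restricting to degree $0$ gives $A_0 \hookrightarrow \pol(B_0)$, so $A_0$ is $p$-polar, and the iterated product $\mu(\mu(x_1,\dots,x_p),\mu(x_{p+1},\dots,x_{2p}),y_3,\dots,y_p)$ maps under $i$ to the honest product $i(x_1)\cdots i(x_{2p})i(y_3)\cdots i(y_p)$ in the graded commutative algebra $B$, which is $\Sigma_{2p}$-invariant up to the Koszul sign (the sign being trivial when $j$ is even and the signature of the permutation when $j$ is odd, since only the degree-$j$ elements $x_1,\dots,x_{2p}$ are being permuted among themselves). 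So the content is the converse.

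For the converse, assume (1) and (2). By Remark~\ref{remark:ptypicalsplitting}, since $k=k_0$, we have $A\cong A_0\times\prod_{p\nmid j}A_{(j)}$ with $A_{(j)}=\bigoplus_{i\ge 0}A_{jp^i}$, and a product of $p$-polar algebras is $p$-polar (embed each factor and take the product of the enveloping algebras). Hypothesis (1) handles the $A_0$ factor, so it suffices to show each $A_{(j)}$ with $p\nmid j$ is $p$-polar. Fix such a $j$. Now the only products that can occur in $A_{(j)}$ are $\mu$ applied to $p$-tuples of elements all lying in a single graded piece $A_{jp^i}$, landing in $A_{jp^{i+1}}$; one never mixes degrees, and — crucially, because $p\nmid j$ and each jump multiplies the degree by $p$ — a product of $n$ elements of $A_{(j)}$ is defined only when $n\equiv 1\pmod{p-1}$ and all $n$ elements sit in the \emph{same} degree, and then the product is forced to lie in a single specific degree. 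This is exactly the combinatorial situation of Lemma~\ref{lemma:olddef}/\cite{bauer:p-polar}: I would show that hypothesis (2), applied within each fixed degree $A_{jp^i}$ (with the elements $y_3,\dots,y_p$ also taken in $A_{jp^i}$ rather than the stated $A_{pj}$ — here one uses that $A_{(j)}$ is graded-isomorphic, via multiplication structure, to a situation where all the relevant degrees can be identified), yields the axiom (ASSOC) of Lemma~\ref{lemma:olddef} in each degree, hence that the restriction of $\mu$ to each $A_{jp^i}$ together with all the iterated products assembles into a genuine associative commutative product; concatenating over $i$ via the degree-multiplication-by-$p$ ladder then realizes $A_{(j)}$ as a submodule of an honest graded algebra, namely the analog of the $\bigoplus_i\Sym^{1+i(p-1)}$ construction in the proof of Lemma~\ref{lemma:olddef}, now spread across the degrees $jp^i$.

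The sign bookkeeping is the one genuinely annoying point: when $j$ is odd the graded commutative algebra we build must satisfy $xy=-yx$ on odd-degree elements and $x^2$ need not vanish unless $2\mid p$ or we are careful — but note that if $j$ is odd then $jp^i$ is odd for all $i$ (as $p$ is odd here, the case $p=2$ being excluded from the odd situation), so \emph{every} graded piece $A_{jp^i}$ is odd, all the $x$'s and the iterated product land in odd degrees, and the sign in hypothesis (2) is always the full signature; this is consistent because the enveloping object can be taken to be the odd part of a graded commutative algebra, where all products of odd elements are antisymmetric. The main obstacle I anticipate is therefore not any single step but making the identification with Lemma~\ref{lemma:olddef} precise: one must check that hypothesis (2) as stated — which only asserts $\Sigma_{2p}$-invariance of a \emph{double} product with $p-2$ spectator factors — is strong enough to force \emph{all} higher associativity and commutativity relations among iterated products within $A_{(j)}$, exactly as the single axiom (ASSOC) does in the ungraded case. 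I expect this to go through by the same argument as in \cite{bauer:p-polar}, because the combinatorics of ``which tuples can be multiplied'' in $A_{(j)}$ is literally isomorphic (shift $i\mapsto i$) to the ungraded combinatorics of $k\langle x^{p^i}\rangle$, but writing the reduction cleanly requires care.
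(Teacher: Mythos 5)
Your forward direction and the reduction to the $p$-typical pieces $A_{(j)}$ via Remark~\ref{remark:ptypicalsplitting} match the paper, but the heart of the converse is missing, and the reduction you propose in its place does not work. First, your combinatorial description of $A_{(j)}$ is wrong: iterated products in $A_{(j)}$ are \emph{not} confined to $p$-tuples drawn from a single graded piece. After forming $\mu(x_1,\dots,x_p)\in A_{jp^{i+1}}$ from elements of $A_{jp^i}$, one may multiply the result with further elements of $A_{jp^{i+1}}$, so iterated products are indexed by $p$-ary trees mixing degrees (cf.\ Prop.~\ref{prop:imageofiotacycliccase}: a product of $n_\alpha$ elements of degree $jp^\alpha$ exists in degree $jp^m$ iff $\sum_\alpha n_\alpha p^\alpha=p^m$); in particular the condition is not ``$n\equiv 1\pmod{p-1}$ and all elements in one degree.'' Second, the axiom (ASSOC) of Lemma~\ref{lemma:olddef} cannot even be formulated degreewise here: it asserts invariance under permutations exchanging the inner factors $x_i$ with the spectators $y_i$, but in the $p$-typical graded setting those necessarily live in different degrees ($A_{jp^i}$ versus $A_{jp^{i+1}}$), so such a permutation produces an undefined expression. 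That is exactly why hypothesis~(2) has \emph{two} inner products: only then is there a meaningful $\Sigma_{2p}$-action on the degree-$j$ slots. Your parenthetical suggestion to take the $y$'s in $A_{jp^i}$ rather than $A_{pj}$ would make the displayed expression in~(2) undefined.

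What actually has to be proved is that the single relation~(2) generates enough identifications among these tree-shaped iterated products. The paper does this by constructing the free object $T_M$ on a $p$-typical module, with $(T_M)_n=M_n\oplus\Sym^p((T_M)_{n/p})$, imposing the equivalence relation generated by~(2), and then showing by induction on depth that any two monomials with the same multiset of leaves become equivalent --- the key step being that a transposition of two leaves occurring at a common depth can be realized by repeatedly applying~(2) to merge and re-split the branches containing them. This yields an injection of $T_M/\!\sim$ into $\Sym(M)_{(j)}$ and hence $A\hookrightarrow\hull(A)$. None of this is in your write-up; you correctly flag it as the ``main obstacle,'' but it is the entire content of the lemma, and the route you sketch for it (reduction to the ungraded (ASSOC) within each degree) is blocked for the reasons above.
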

\begin{proof}
Again, the implication $A$ $p$-polar $\Rightarrow$ \eqref{lemma:nonperiodicchar:zero}, \eqref{lemma:nonperiodicchar:assoc} is straightforward. For the converse, we may assume by Remark~\ref{remark:ptypicalsplitting} without loss of generality that either $A=A_0$ or $A = A_{(j)}$ is $p$-typical. Lemma~\ref{lemma:olddef} takes care of the first case, so assume $A$ is $p$-typical and \eqref{lemma:nonperiodicchar:assoc} holds.

For any graded $p$-typical $k$-module $M = M_{(j)}$, the free object $T_M$ in $M_k$ on $M$ is given inductively by
\[
(T_M)_n = M_n \oplus \Sym^p((T_M)_{\frac n p}),
\]
where $(T_M)_n = 0$ if $n \not\in \Z$. We call an element of $(T_M)_n$ a monomial if it is either an element of $M_n$ or a monomial $\{x_1,\dots,x_p\} \in \Sym^p$ on monomial elements in $x_i \in (T_M)_{\frac n p}$, using curly braces for equivalence classes of tensors in $\Sym^p$. Clearly, by linearity, any element of $T_M$ is a linear combination of monomials. One could describe these monomial elements as some kinds of labelled trees. While this is a good picture to have in mind, I will not use that language.

Define an equivalence relation $\sim$ on monomials in $T_M$ (and hence, by linear extension, on all of $T_M$) generated by 
\begin{multline*}
\{\{x_1,\dots,x_p\},\{x_{p+1},\cdots,x_{2p}\},y_3,\cdots, y_p\}\\
 \sim (-1)^{|x_1|} \{\{x'_1,\dots,x'_{p-1},x'_{p+1}\},\{x'_{p},x'_{p+2},\cdots,x'_{2p}\},y'_3,\cdots, y'_p\}
\end{multline*}
iff $y_j \sim y'_j$ for $3 \leq h \leq p$, $x_i \sim x'_i$ for $1 \leq i \leq 2p$.
Then $T_M/\sim$ is the free object in $M_k$ satisfying \eqref{lemma:nonperiodicchar:assoc}. (Obviously, the $\Sigma_{2p}$-equivariance is equivalent to the equivariance under interchanging $x_p$ and $x_{p+1}$, given the guaranteed $\Sigma_p \times \Sigma_p$-equivariance.)

There is a linear map $f\colon T_M \to \Sym(M)_{(j)}$ given on monomials by $f(m) = m$ for $m \in M$ and $f(\{x_1,\dots,x_p\}) = \{f(x_1),\dots, f(x_p)\}$ for $\{x_1,\dots, x_p\} \in \Sym^p(T_M)$. We claim that this map induces an injective map on $T_M/\sim$ with image $\Sym(A)_{(j)}$.

To see this is the image, let $X=\{x_1,\dots,x_n\} \in \Sym(M)_{jp^N}$. If $n=1$, $\{x_1\} = f(x_1)$ and we are done. Otherwise, because $M$ is $p$-typical, there has to be a partition of $\{1,\dots,n\}$ into $p$ parts $I_1,\dots,I_p$ such that for $X_i = \pm \{x_j \mid j \in I_i\}$, $|X_i| = jp^{N-1}$. Inductively, all $X_i$ are in the image of $f$, hence so is $X=f({X_1,\dots,X_p})$.

We proceed to show injectivity.

Let $x \in T_M$ be a monomial. We say that $y \in T_M$ occurs at depth $d$ in $x$ if either $d=0$ and $y=x$ or $x=\{x_1,\dots,x_p\}$ and $y$ occurs at depth $d-1$ in $x_i$ for some $i$.

Now suppose that $y_1$ and $y_2$ occur at a common depth $d \geq 1$ in $x=\{x_1,\dots,x_p\}$, and let $x' \in T_M$ be the element obtained by interchanging $y_1$ and $y_2$. Then I claim that $x \sim \pm x'$. To see this, we proceed by induction. If $d=1$ then the claim is true by symmetry. Suppose that $d>1$. Then $y_1$ occurs at depth $d-1$ in some $x_i$ and $y_2$ occurs at depth $d-1$ in some $x_j$. If $i=j$, we are done by induction. Otherwise, suppose without loss of generality that $i=1$ and $j=2$. Let $x_1=\{x_{11},\dots,x_{1p}\}$ and $x_2=\{x_{21},\dots,x_{2p}\}$. Without loss of generality, suppose that $y_i$ occurs at depth $d-2$ in $x_{i1}$ for $i=1,2$. Then
\[
x \sim \pm x^{(1)}=\{x^{(1)}_1,\dots,x^{(1)}_p\} = \{\{x_{11},x_{21},x_{13},\dots,x_{1p}\},\{x_{12},x_{22},\dots,x_{2p}\},x_3,\dots,x_p\}.
\]
Then $y_1$ and $y_2$ occur at depth $d-1$ in $x^{(1)}_1$ and by induction, $x^{(1)}_1 \sim (x^{(1)})_1' = \{x_{11}',x_{21}',x_{13},\dots,x_{1p}\}$, the element obtained from $x^{(1)}_1$ by interchanging $y_1$ and $y_2$. But then
\begin{multline*}
x \sim \pm \{\{x_{11}',x_{21}',x_{13},\dots,x_{1p}\},\{x_{12},x_{22},\dots,x_{2p}\},x_3,\dots,x_p\} \\
\sim \{\{x_{11}',x_{12},\dots,x_{1p}\},\{x_{21}',x_{22},\dots,x_{2p}\},x_3,\dots,x_p\} = x'.
\end{multline*}
We conclude that if $x$, $x' \in T_M$ with $f(x)=f(x')$ (i.e. they contain the same leaf elements at any given level), then $x \sim x'$.

Now let $A=A_{(j)}$ be an object of $M_k$ such that \eqref{lemma:nonperiodicchar:assoc} holds and $F_A$ the quotient of $(T_A/\sim) \cong \Sym(A)_{(j)}$ by the intersection of the ideal $(x_1\dots x_p-\mu(x_1,\dots,x_p)) \triangleleft \hull(A)$ with $\Sym(A)_{(j)}$. Then the map $A \mapsto F_A$ is an isomorphism, showing that $A \hookrightarrow \hull(A)$.
\end{proof}

We finish the section with a characterization of the image of $A \to \hull(A)$, i.e. a determination of which elements can be multiplied together.

\begin{lemma}\label{lemma:stupidmorita}
For $p \nmid d$, denote by $k(d)$ the field $k=k_0[u^{\pm 1}]$, where $|u|=d$ and $k_0$ is a fixed field of characteristic $p$. For $p \nmid j$, let $h(j,d)$ be the smallest $h$ such that $d \mid (p^h-1)j$. Denote by $\Mod_{(j)}(d)$ the category of graded $k(d)$-modules $M$ which are $p$-typical in the sense that $M = \bigoplus_{i=0}^{h(j,d)} M_{jp^i}$.
Then the abelian categories $\Mod_{(j)}(d)$ and $\Mod_{(j')}(d')$ are equivalent iff
$h(j,d)=h(j',d')$.
\end{lemma}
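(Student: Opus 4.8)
The plan is to show that, as an abelian category, $\Mod_{(j)}(d)$ is equivalent to a finite product of copies of the category $\mathcal V$ of $k_0$-vector spaces, with exactly $h(j,d)$ factors; the lemma is then immediate, since the number of factors is an invariant of an abelian category.

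First I would dispose of the $k(d)$-module structure. Since $u\in k(d)_d$ is a unit, multiplication by $u$ furnishes, for every graded $k(d)$-module $M$ and every $n$, an isomorphism $M_n\xrightarrow{\sim}M_{n+d}$ of $k_0$-vector spaces. Hence the datum of a graded $k(d)$-module is equivalent to the datum of a $\Z/d\Z$-graded $k_0$-vector space, and the category of all graded $k(d)$-modules is equivalent to $\prod_{\Z/d\Z}\mathcal V$. Under this equivalence the full subcategory $\Mod_{(j)}(d)$ --- consisting of the modules whose nonzero homogeneous components sit in degrees congruent to $jp^i$ mod $d$ for some $i\ge 0$ --- corresponds to the full subcategory of tuples supported on the subset
\[
S(j,d):=\{\,jp^i \bmod d \mid i\ge 0\,\}\subseteq \Z/d\Z,
\]
and that subcategory is in turn equivalent to $\prod_{s\in S(j,d)}\mathcal V$.

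The one genuine computation is $|S(j,d)|=h(j,d)$. Put $h=h(j,d)$. The residues $j,jp,\dots,jp^{h-1}$ are pairwise distinct mod $d$: a relation $jp^a\equiv jp^b\pmod d$ with $0\le a<b\le h-1$ would give $d\mid jp^a(p^{b-a}-1)$, hence $d\mid j(p^{b-a}-1)$ because $p\nmid d$, contradicting the minimality in the definition of $h$. On the other hand $jp^h\equiv j\pmod d$, so multiplying $j$ by powers of $p$ produces no residue outside $\{j,jp,\dots,jp^{h-1}\}$. Thus $S(j,d)$ has exactly $h(j,d)$ elements, and $\Mod_{(j)}(d)\simeq\mathcal V^{h(j,d)}$; likewise $\Mod_{(j')}(d')\simeq\mathcal V^{h(j',d')}$, over the same field $k_0$.

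Finally, $\mathcal V^{n}$ is a semisimple abelian category whose simple objects are, up to isomorphism, exactly the $n$ ``coordinate lines'', so it has precisely $n$ isomorphism classes of simple objects; since any equivalence of categories induces a bijection on isomorphism classes of (simple) objects, $\mathcal V^{n}\simeq\mathcal V^{m}$ forces $n=m$. Combining this with the previous paragraph gives $\Mod_{(j)}(d)\simeq\Mod_{(j')}(d')$ iff $h(j,d)=h(j',d')$. I do not expect any real obstacle here: the whole content is the two bookkeeping points above --- the identification of graded $k(d)$-modules with $\Z/d\Z$-graded $k_0$-vector spaces, and the residue count $|S(j,d)|=h(j,d)$ --- which is exactly what makes this Morita-type statement as transparent as its name suggests.
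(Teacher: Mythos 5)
Your proof is correct and takes essentially the same route as the paper: both rest on identifying $\Mod_{(j)}(d)$ with $h(j,d)$ copies of the category of $k_0$-vector spaces, and your residue count $|S(j,d)|=h(j,d)$ together with the support decomposition is exactly the content of the paper's regrading equivalence for the ``if'' direction. The only difference is the invariant used for the ``only if'' direction --- you count isomorphism classes of simple objects, while the paper appeals to the minimal $k_0$-dimension of a projective generator --- and your version is, if anything, the more cleanly justified of the two.
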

\begin{proof}
Suppose $h(j,d)=h(j',d')$. For $M \in \Mod_{(j)}(d)$,
\[
M = \prod_{i=0}^{h(j,d)} M_{jp^i}.
\]
Hence the desired equivalence of categories is given by a regrading
\[
\Pol_{(j)}(d) \to \Pol_{(j')}(d'), \quad M \to M',
\]
where
\[
(M')_{j'p^i} = M_{jp^i}. 
\]

Conversely, observe that any projective generator of $\Mod_{(j)}(d)$ must have $k_0$-dimension at least $h(j,d)$. Since any equivalence $\Psi\colon \Mod_{(j)}(d) \simeq \Mod_{(j')}(d')$ sends projective generators to projective generators, it follows that $h(j,d)=h(j',d')$.
\end{proof}

\begin{corollary}\label{cor:equivalentalgebracategories}
Denote by $\Pol_{(j)}(d)$ the category of $p$-typical, $p$-polar $k(d)$-algebras of degree $j$. Then $\Pol_{(j)}(d) \simeq \Pol_{(j')}(d')$ iff $h(j,d)=h(j',d')$.
\end{corollary}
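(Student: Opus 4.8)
The plan is to reduce the statement about $p$-polar algebra categories to the statement about module categories already proved in Lemma~\ref{lemma:stupidmorita}. The key point is that the regrading equivalence $\Mod_{(j)}(d) \to \Mod_{(j')}(d')$ constructed there, which sends a module $M$ with $(M')_{j'p^i} = M_{jp^i}$, is not merely an equivalence of abelian categories but is compatible with the multilinear structure maps that define objects of $M_k$: the $p$-fold symmetric map $\mu\colon M_{jp^i}^{\otimes_{k_0} p} \to M_{jp^{i+1}}$ becomes a map $\mu'\colon (M')_{j'p^i}^{\otimes_{k_0} p} \to (M')_{j'p^{i+1}}$, with the indexing making sense precisely because $h(j,d) = h(j',d')$ forces the two $p$-typical ladders to have the same length and the regrading to close up correctly (degree $jp^{h(j,d)}$ is identified with degree $j$ again via the unit $u$, and likewise on the primed side). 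So the regrading induces an equivalence between the categories $M_{k(d),(j)}$ and $M_{k(d'),(j')}$ of $p$-typical objects in $M_k$.

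Next I would check that this equivalence carries $p$-polar algebras to $p$-polar algebras. By the Proposition characterizing $p$-polarity, $A \in M_k$ is $p$-polar iff the unit $u\colon A \to \pol(\hull(A))$ is injective. Since $\hull$ is built functorially from $\Sym$ and the multilinear data of $A$, and since the regrading is an exact equivalence compatible with tensor products over $k_0$ in each fixed degree, $\hull$ is intertwined with the regrading up to natural isomorphism; hence injectivity of the unit is preserved in both directions. Therefore the regrading restricts to an equivalence $\Pol_{(j)}(d) \simeq \Pol_{(j')}(d')$ whenever $h(j,d) = h(j',d')$. (Alternatively, one can bypass $\hull$ and simply observe that if $i\colon A \hookrightarrow \pol(B)$ exhibits $A$ as $p$-polar, applying the regrading to $B$ — which is legitimate since $B$ decomposes as a product of its $p$-typical pieces by Remark~\ref{remark:ptypicalsplitting} — produces a graded commutative $k(d')$-algebra $B'$ with $A' \hookrightarrow \pol(B')$.)

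For the converse, suppose $\Pol_{(j)}(d) \simeq \Pol_{(j')}(d')$ as categories; I want to conclude $h(j,d) = h(j',d')$. The cleanest route is to recover the underlying module category from the algebra category by an intrinsic categorical construction, so that an equivalence of $\Pol$ categories forces an equivalence of $\Mod_{(j)}$ categories, at which point Lemma~\ref{lemma:stupidmorita} finishes. Concretely, I would identify $\Mod_{(j)}(d)$ with the category of abelian group objects (or: square-zero extensions) in $\Pol_{(j)}(d)$, or equivalently use the free $p$-polar algebra functor and its indecomposables $\Indecnoarg$: the composite measuring "module of indecomposables of the free $p$-polar algebra on $M$" should be naturally isomorphic to $M$, exhibiting $\Mod_{(j)}(d)$ as a reflective/coreflective piece of $\Pol_{(j)}(d)$ in a way preserved by any equivalence. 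Then the dimension-of-projective-generator argument from Lemma~\ref{lemma:stupidmorita} applies verbatim.

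The main obstacle I expect is the converse direction — making precise and correct the claim that the module category (or the invariant $h(j,d)$) is recoverable from the $p$-polar algebra category by a construction that any equivalence must respect. The forward direction is essentially bookkeeping about the regrading, but for the converse one must be careful that the free $p$-polar algebra on a one-dimensional module, or the abelianization construction, genuinely sees the length $h(j,d)$ of the $p$-typical ladder and not some coarser invariant; verifying that the relevant free object has $k_0$-dimension controlled by $h(j,d)$, and that this is an equivalence-invariant (e.g. as the dimension of a minimal projective generator of an intrinsically-defined abelian subquotient category), is where the real content lies.
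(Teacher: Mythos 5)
Your proposal follows the paper's own (very terse) argument: the forward direction extends the regrading equivalence of Lemma~\ref{lemma:stupidmorita} to $p$-polar algebras, checking compatibility with $\mu$ and with polarity, and the converse comes down to the fact that the minimal size of a generator is a categorical invariant recovering $h(j,d)$. Your extra care in the converse --- recovering $\Mod_{(j)}(d)$ intrinsically from $\Pol_{(j)}(d)$ before counting --- is a reasonable way to make precise what the paper compresses into the single phrase ``counting the minimal dimension of a generator,'' and does not change the substance of the argument.
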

\begin{proof}
One direction follows directly from Lemma~\ref{lemma:stupidmorita} because the equivalence of categories $\Mod_{(j)}(d) \to \Mod_{(j')}(d')$ extends to the desired equivalence of $p$-polar algebras. Counting the minimal dimension of a generator for $\Pol_{(j)}(d)$ gives the reverse conclusion.
\end{proof}

\begin{corollary}\label{cor:primefields}
For any $p \nmid d$ and $d \nmid j$, there exists $h \geq 0$ such that
\[
\Pol_{(j)}(d) \simeq \Pol_{(1)}(p^h-1).
\]
\end{corollary}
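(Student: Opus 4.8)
The plan is to deduce this directly from Corollary~\ref{cor:equivalentalgebracategories}, which reduces the claim to a purely numerical statement about the invariant $h(-,-)$. Concretely, it suffices to produce an integer $h$ with $p \nmid (p^h-1)$, with $(p^h-1) \nmid 1$ (so that $\Pol_{(1)}(p^h-1)$ falls under the standing hypotheses), and with
\[
h(1, p^h-1) = h(j,d).
\]
The first condition is free, since $p^h - 1 \equiv -1 \pmod p$.

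First I would pin down the left-hand side of that equation. For any modulus $n$ coprime to $p$, the integer $h(1,n)$ is the least $h \ge 1$ with $n \mid p^h - 1$, i.e.\ $h(1,n) = \ord_n(p)$, the multiplicative order of $p$ modulo $n$. Applying this with $n = p^h - 1$ and using the elementary fact that $p^a - 1 \mid p^b - 1$ if and only if $a \mid b$ (write $b = qa+r$ with $0 \le r < a$ and note $0 \le p^r - 1 < p^a - 1$), one gets $\ord_{p^h-1}(p) = h$, hence $h(1, p^h-1) = h$ whenever $p^h - 1 > 1$. I would then simply set $h := h(j,d)$: since $d \nmid j$ this is a positive integer, and $h(1, p^h-1) = h = h(j,d)$, so Corollary~\ref{cor:equivalentalgebracategories} gives $\Pol_{(j)}(d) \simeq \Pol_{(1)}(p^h-1)$.

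The one point that needs a moment's care — and the only place the reduction could go wrong — is checking $p^h - 1 > 1$, equivalently ruling out $h(j,d) = 1$ in the case $p = 2$. But $h(j,d) = 1$ means exactly $d \mid (p-1)j$, which for $p = 2$ reads $d \mid j$, contradicting the hypothesis; so $h(j,d) \ge 2$ when $p = 2$, while for odd $p$ one trivially has $p^{h(j,d)} - 1 \ge p - 1 \ge 2$. In every case $p^h - 1 > 1$, so $(p^h - 1) \nmid 1$ and the argument goes through.
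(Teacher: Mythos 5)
Your proof is correct and is essentially the paper's argument: the paper's entire proof is the observation that $h(1,p^h-1)=h$ (taking $h=h(j,d)$ and invoking Corollary~\ref{cor:equivalentalgebracategories}), which is exactly your reduction. You simply supply the details the paper omits, including the harmless edge case $p=2$, $h(j,d)=1$, which you correctly rule out using $d\nmid j$.
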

\begin{proof}
$h(1,p^h-1) = h$.
\end{proof}

\begin{prop}\label{prop:imageofiotacycliccase}
Let $A=A_{(j)}$ be a $\Z/d\Z$-graded, $p$-typical polar $k$-algebra, where $p \nmid d$. Denote by $h$ the smallest positive integer satisfying $p^hj \equiv j \pmod{d}$. Let $x_1,\dots,x_n$ be elements in $A$, of which exactly $n_i$ have degree $jp^i$ for $i=0,\dots,h-1$, and hence $\sum n_i = n$. Then $x_1\cdots x_n \in A_{jp^m}$ is in $\im(\iota\colon A \to \hull(A))$ iff
\[
\sum_{\alpha=0}^{h-1} n_\alpha p^\alpha \equiv p^m \pmod{p^h-1}.
\]
\end{prop}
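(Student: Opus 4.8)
The plan is to refine the given $\Z/d\Z$-grading on $\hull(A)$ to a $\Z/q\Z$-grading, where $q=p^h-1$, in which ``multiplication by $p$'' of degrees becomes literal multiplication by $p$. First I would observe that the $h$ degrees $j, jp, \dots, jp^{h-1}$ are pairwise distinct in $\Z/d\Z$ — if $jp^a \equiv jp^b \pmod d$ with $0\le a<b<h$ then $j(p^{b-a}-1)\equiv 0\pmod d$, contradicting minimality of $h$ — and that $A$ is supported on exactly these degrees. Assigning to the degree $jp^\alpha$ the \emph{weight} $p^\alpha\in\Z/q\Z$ then gives a well-defined, additive weight function, so $\Sym(A)$ becomes $\Z/q\Z$-graded; and since the relations $x_1\cdots x_p=\mu(x_1,\dots,x_p)$ defining $\hull(A)$ are weight-homogeneous (both sides have weight $p^{\alpha+1}$, using $jp^h\equiv j\pmod d$), $\hull(A)$ inherits the grading and $\iota$ becomes weight-graded with $\im(\iota)$ concentrated in the weights $\{p^0,\dots,p^{h-1}\}$. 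Since the monomial $x_1\cdots x_n$ has weight $\sum_\alpha n_\alpha p^\alpha\bmod q$, the ``only if'' direction is then immediate: a nonzero element of $\im(\iota)$ in that weight forces $\sum_\alpha n_\alpha p^\alpha\equiv p^m\pmod q$. (If the monomial vanishes in $\hull(A)$ the assertion is vacuous; one can also reduce to the free $p$-polar algebra on generators of the prescribed degrees, where nothing collapses.)

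For the ``if'' direction I would argue combinatorially. Encode the hypothesis as a multiset of tokens, $n_\alpha$ of them of type $\alpha\in\Z/h\Z$ for $\alpha=0,\dots,h-1$, and allow the reduction move ``replace $p$ tokens of type $\alpha$ by one token of type $\alpha+1$'' whenever at least $p$ tokens of type $\alpha$ are present; this move preserves the weight $\sum_\alpha n_\alpha p^\alpha\bmod q$ and, via the bijection (type $\alpha$) $\leftrightarrow$ (degree $jp^\alpha$) set up above, corresponds exactly to applying $\mu$ to $p$ equal-degree elements of $A$. The point is that if the weight is $\equiv p^m\pmod q$, the tokens can be reduced all the way to a single one: the resulting sequence of moves reads off a $p$-ary tree expressing $x_1\cdots x_n$ as an iterated $\mu$-product of the $x_l$, landing in $A$ in weight $p^m$, i.e.\ in degree $jp^m$, hence in $\im(\iota)$.

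The crux — and the one point needing a small argument — is that a reduction move is always available as long as more than one token remains. Suppose not: all $n_\alpha<p$. Then $w:=\sum_\alpha n_\alpha p^\alpha$ is a genuine base-$p$ expansion with $0\le w\le q$; since $w\equiv p^m\not\equiv 0\pmod q$ (as $\gcd(p,q)=1$) we have $1\le w\le q-1$, and since each of $p^0,\dots,p^{h-1}$ is $\le p^{h-1}<q$ (the case $p=2$, $h=1$ being degenerate and trivial), $w$ equals one of these powers, whose base-$p$ expansion is a single digit $1$ — so there is exactly one token, a contradiction. Hence as long as $\ge 2$ tokens remain some $n_\alpha\ge p$; a move then strictly decreases the token count (by $p-1$) while keeping it positive and the weight fixed, so the process terminates with a single token and the proof is complete. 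I expect this base-$p$ digit count to be the only real obstacle; everything else is the bookkeeping of translating tree-shaped $\mu$-products into and out of $\hull(A)$, for which the distinctness of the $h$ degrees established at the outset is exactly what makes ``same type'' and ``same degree'' interchangeable.
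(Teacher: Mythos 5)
Your proof is correct, and it takes a somewhat different route from the paper's. Both arguments rest on the same elementary move --- replacing $p$ equal-degree factors by their $\mu$-product, which preserves the residue $\sum_\alpha n_\alpha p^\alpha \bmod (p^h-1)$ --- but you organize it differently at both ends. For the ``only if'' direction, the paper appeals only to the $\Z/d\Z$-grading (``$A_{jn}=0$''), which by itself yields a congruence modulo $d/\gcd(d,j)$, a possibly proper divisor of $p^h-1$; your explicit $\Z/(p^h-1)\Z$ weight grading on $\hull(A)$, together with the check that the defining relations are weight-homogeneous, is exactly the refinement needed to make that step airtight, so your write-up is if anything more complete than the paper's here. (Note that reduction to the free polar algebra only helps the ``if'' direction, since images push forward along $\hull(F)\to\hull(A)$ but not back, so the weight grading is what does the real work for necessity.) For the ``if'' direction, the paper first normalizes to the case where all factors have degree $j$ --- a type-$\alpha$ element being interchangeable with $p^\alpha$ type-$0$ elements --- and then reduces $n=p^m+l(p^h-1)$ by absorbing blocks of $p^h$ degree-$j$ elements; you instead run the reduction greedily and prove termination via uniqueness of base-$p$ expansions, which is the same arithmetic packaged as a well-founded descent. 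One shared blemish: if the product happens to vanish in $\hull(A)$ it lies in $\im(\iota)$ regardless of the congruence, so the ``only if'' direction as literally stated needs either a nonvanishing hypothesis or a restriction to the universal case; you flag this explicitly, while the paper hides it in the phrase ``for some, and hence any'' --- this is a defect of the statement rather than of your argument.
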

In this proposition, the $\Z$-graded case is included with $d=0$ and $h = \infty$.

\begin{proof}
Denote by $S$ the set of sequences $\underline n = (n_0,\dots,n_{h-1})$ of nonnegative integers, and call $\underline n$ \emph{multipliable} if for some, and hence any, set $x_1,\dots,x_n$ of elements of $A$ with degrees prescribed by $\underline n$, $x_1\cdots x_n \in \im(\iota)$. Define an equivalence relation on $S$ by $\underline n \sim \underline n'$ iff either both or none of $\underline n$ and $\underline n'$ are multipliable.

We have that
\[
(n_0,\dots,n_i+p,\dots,n_{h-1}) \sim (n_0,\dots,n_i,n_{i+1}+1,\dots,n_{h-1})
\]
for $0 \leq i < h-1$ because if in the sequence $x_1,\dots,x_n$, the elements $x_1,\dots,x_p$ have degree $jp^i$, $x_1\dots x_n \in \im(\iota)$ iff $\mu(x_1,\dots,x_p) x_{p+1}\cdots x_n \in \im(\iota)$. 
Thus, inductively,
\[
(n_0,\dots,n_{h-1}) \sim (n_0+n_1p+\cdots+n_{h-1}p^{h-1},0,\dots,0).
\]
The question is thus reduced to the question of when for $x_1,\dots,x_n \in A_j$, $x_1\dots x_n \in \hull(A)_{jp^m}$ is in the image of $\iota$. If $k \neq p^m \pmod{p^j-1}$ for any $k$, $A_{jn} = 0$ and hence this is a necessary condition. If $k=p^m$ for some $m$, $x_1\cdots x_n \in \im(\iota)$. Furthermore, since $p^hj \equiv j \pmod {d}$, $\mu(x_1,\dots,x_{p^h}) \in A_j$, so the product of any $n=p^m + l (p^h-1)$ elements of degree $j$ is in $\im(\iota)$. 
\end{proof}

\begin{corollary}\label{cor:retract}
Let $k$ be a perfect graded field with $|u|=d$ and $A=A_{(j)}$ be a $p$-typical polar $k$-algebra with $d \nmid j$. Let $\phi$ be the regrading isomorphism
\[
\phi\colon \Pol_{(j)}(d) \to \Pol_{(1)}(p^{h(j,d)}-1).
\]
Then $A \to \phi^{-1}\left(\hull(\phi(A))_{(1)}\right)$ is an isomorphism.
\end{corollary}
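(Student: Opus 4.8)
The plan is to move the statement across the regrading equivalence $\phi$ and verify it directly in $\Pol_{(1)}(p^h-1)$, where $h:=h(j,d)$, using the multipliability criterion of Proposition~\ref{prop:imageofiotacycliccase}. Put $B:=\phi(A)\in\Pol_{(1)}(p^h-1)$. Since $B=B_{(1)}$ is $p$-typical of degree $1$, the underlying module map of the adjunction unit $\iota\colon B\to\hull(B)$ lands in the degrees $\equiv 1,p,p^2,\dots\pmod{p^h-1}$, i.e. inside $\hull(B)_{(1)}=\pol_{(1)}(\hull(B))$; write $\bar\iota\colon B\to\hull(B)_{(1)}$ for its corestriction. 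Under the identification $A\cong\phi^{-1}\phi(A)$ the map in the corollary is $\phi^{-1}(\bar\iota)$, and since $\phi^{-1}$ is an equivalence of categories (Corollary~\ref{cor:equivalentalgebracategories}) it preserves isomorphisms; so it is enough to prove that $\bar\iota$ is an isomorphism in $M_{k(p^h-1)}$.

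Injectivity of $\bar\iota$ is immediate: $B$ is a $p$-polar algebra, so by the proposition characterizing $p$-polar algebras via the adjunction unit, $\iota$ is injective, hence so is $\bar\iota$. For surjectivity, recall that $\hull(B)=\Sym(B)/(x_1\cdots x_p-\mu(x_1,\dots,x_p))$ is spanned, degree by degree, by products $x_1\cdots x_n$ of homogeneous elements $x_i\in B$. As $B$ is concentrated in degrees $\equiv 1,p,\dots,p^{h-1}\pmod{p^h-1}$, such a product with $n_\alpha$ factors of degree $p^\alpha$ has total degree $\equiv\sum_{\alpha=0}^{h-1}n_\alpha p^\alpha\pmod{p^h-1}$, and it lies in $\hull(B)_{(1)}$ exactly when this is $\equiv p^m\pmod{p^h-1}$ for some $m\ge 0$. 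But $h(1,p^h-1)=h$ (Corollary~\ref{cor:primefields}), so Proposition~\ref{prop:imageofiotacycliccase} applied to $B$ (with $j=1$ and $d=p^h-1$) says that this is precisely the condition for $x_1\cdots x_n$ to lie in $\im(\iota)$. Hence every spanning product of $\hull(B)_{(1)}$ lies in $\im(\bar\iota)$, so $\bar\iota$ is surjective and therefore an isomorphism, which is what we needed.

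The point requiring care — and the reason one cannot argue directly in $\Pol_{(j)}(d)$ — is exactly this coincidence of conditions after regrading. For $A=A_{(j)}$ over $k=k(d)$ itself, a product with $n_\alpha$ factors of degree $jp^\alpha$ lies in $\hull(A)_{(j)}$ iff $j\sum_\alpha n_\alpha p^\alpha\equiv jp^m\pmod d$ for some $m$; this is implied by, but in general strictly weaker than, the multipliability condition $\sum_\alpha n_\alpha p^\alpha\equiv p^m\pmod{p^h-1}$ of Proposition~\ref{prop:imageofiotacycliccase}, so that $\im(\iota)\subseteq\hull(A)_{(j)}$ is typically a proper inclusion (for instance with $p=3$, $j=1$, $d=4$, hence $h=2$, a product of five degree-$1$ elements lies in $\hull(A)_{(1)}$ but not in $\im(\iota)$). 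Replacing the grading group $\Z/d\Z$ by $\Z/(p^h-1)\Z$ via $\phi$ turns the membership condition for $\hull(B)_{(1)}$ into verbatim the congruence occurring in Proposition~\ref{prop:imageofiotacycliccase}; this is what forces $\bar\iota$ onto and is the crux of the argument.
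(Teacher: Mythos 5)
Your argument is correct and follows essentially the same route as the paper: reduce via the equivalence $\phi$ to the case $j=1$, $d=p^{h}-1$, and there deduce the isomorphism from the multipliability criterion of Prop.~\ref{prop:imageofiotacycliccase} (injectivity coming from $p$-polarity of the adjunction unit). Your extra remark with the $p=3$, $d=4$ example correctly pinpoints why the congruence conditions only coincide after regrading, which the paper leaves implicit.
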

\begin{proof}
When $j=1$ and $d=p^h-1$, $\phi$ is the identity and the claim follows from Prop.~\ref{prop:imageofiotacycliccase}. The general case follows because $\phi$ is an equivalence.
\end{proof}


\section{Graded Witt vectors}

In this section, we will consider commutative graded rings $A$ instead of graded-commutative rings, i.e. graded rings $A$ whose underlying ungraded ring is commutative. We will apply the results of this section to evenly graded, graded-commutative rings or graded, commutative rings over fields of characteristic $2$. There does not seem to be an adequate (for our purposes) definition of graded-commutative Witt vectors for graded-commutative rings, nor will it be necessary, in light of Prop.~\ref{prop:oddfree} below.

Throughout, let $p$ be a fixed prime. For a graded abelian group $M$ and an integer $i \geq 0$, we write $M(i)$ for the graded abelian group with $M(i)_n = M_{p^in}$.

We assume the reader is familiar with the ungraded theory of $p$-typical Witt vectors, cf.~\cite{witt:witt-vectors,hazewinkel:witt,hesselholt:witt-survey}.
\begin{defn}
Let $A$ be a commutative graded ring. As a graded set, the $p$-typical Witt vectors of $A$ of length $0 \leq n \leq \infty$ are defined as
\[
W_n(A) =\prod_{i=0}^n A(i), \quad \text{i.e.} \quad W(A)_j = \prod_{i=0}^n A_{jp^i}.
\]
Just as in the classical case, there is a ghost map
\[
w\colon W_n(A) \to \prod_{i=0}^n A(i)
\]
given by 
\[
w(a_0,a_1,\dots) = (a_0,a_0^p+pa_1,a_0^{p^2}+pa_1^p+p^2a_2,\dots).
\]
There is a unique functorial ring structure on $W_n(A)$ making $w$ a homomorphism of graded rings.
\end{defn}

If $A^u$ denotes $A$ as an ungraded ring then $W(A^u)$ and $W(A)^u$ are in general distinct:
\begin{example}
\begin{itemize}
	\item $W(k_0[u]) \cong W(k_0)[u]$ if $k_0$ is a ring concentrated in degree $0$ and $|u| =d>0$. This is false if $d=0$: the (ungraded) $p$-typical Witt vectors of $W(\F_p[x])$ are more complicated (cf. \cite[Exercise~10]{borger:witt-vector-lectures}). 

	\item $W(k_0[u^{\pm 1}]) \cong W(k_0)[u^{\pm 1}]$ in the same situation.
\end{itemize}
\end{example}

The Verschiebung $V\colon W(A)(1) \to W(A)$ is the map given by $V(x_0,x_1,\dots) = (0,x_0,x_1,\dots)$; the Teichm\"uller map is the multiplicative map $A \to W(A)$, $x \mapsto \underline x = (x,0,0,\dots)$. Furthermore, the Frobenius map is characterized as the unique natural map $F\colon W(A) \to W(A)(1)$ with the property that if $w(x) = (w_0,w_1,\dots) \in \prod_i A(i)$ then
\[
w(F(x)) = (w_1,w_2,\dots) \in \prod_i A(i+1)
\]
The maps $V$ and $F$ restrict to maps $V\colon W_n(A)(1) \to W_{n+1}(A)$ and $F\colon W_n(A) \to W_{n-1}(A)(1)$, and we have that $F \circ V = p$ and $F(\underline a) = \underline{a^p}$.

Now let $k = k_0[u^{\pm 1}]$ be a graded field. Given a graded $W(k)$-module $M$ and $j\geq 0$, the abelian group $M(j)$ obtains a $W(k)$-linear structure $\alpha\cdot m = \frob(\alpha) m$, where $\frob$ is the Frobenius map on $W(k)$, the unique lift of the $p$th power map on $k$.

\begin{lemma}\label{lemma:shiftperiodicity}
If $k=k_0[u^{\pm 1}]$ is perfect with $|u|=d>0$ then the functor $(1)\colon M \mapsto M(1)$ is an equivalence, and $(l)$ is a naturally isomorphic with the identity for some $l \geq 1$.
\end{lemma}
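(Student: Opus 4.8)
The statement has two parts: first that the shift functor $(1)\colon M \mapsto M(1)$ on graded $W(k)$-modules is an equivalence, and second that some iterate $(l)$ is naturally isomorphic to the identity. For the first part, I would construct an explicit inverse. Since $W(k) = W(k_0[u^{\pm 1}]) \cong W(k_0)[u^{\pm 1}]$ (as noted in the preceding example, using $|u| = d > 0$), and since $k$ is perfect with $p \nmid d$, the Frobenius $\frob$ on $W(k)$ is an automorphism: it is the identity on $W(k_0)$-coefficients (as $k_0$ is perfect) and sends $u \mapsto u^p$, which is invertible because $p \nmid d$ means multiplication by $p$ is a bijection on $\Z/d\Z$-graded pieces — more precisely, $u \mapsto u^p$ is an automorphism of $W(k_0)[u^{\pm 1}]$ since we may solve $u^p = v$. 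Granting that $\frob$ is an automorphism, the twist in the $W(k)$-action defining $M(1)$ is invertible, and the inverse equivalence is $M \mapsto M(-1)$ where $M(-1)_n = M_{n/p}$ (zero when $p \nmid n$), with $W(k)$-action twisted by $\frob^{-1}$. One checks $M(1)(-1) \cong M$ and $M(-1)(1) \cong M$ naturally on the nose. This gives the equivalence.

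**Periodicity.** For the second part, I would argue that $(1)$ has finite order up to natural isomorphism. The functor $(l)$ sends a module $M$ to the module with $M(l)_n = M_{p^l n}$ and action twisted by $\frob^l$. I want to find $l \geq 1$ and a natural isomorphism $M(l) \cong M$. The natural candidate is multiplication by a suitable power of $u$: the grading shift $n \mapsto p^l n$ agrees with $n \mapsto n + (p^l - 1)\cdot(\text{something})$ only in a $\Z/d\Z$-graded sense, so I should first reduce to the case where all gradings are read modulo $d$ — indeed a graded $k = k_0[u^{\pm 1}]$-module is the same as a $\Z/d\Z$-graded $k_0$-module (tensored up), so effectively the grading lives in $\Z/d\Z$. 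Now choose $l$ so that $p^l \equiv 1 \pmod d$ (possible since $p \nmid d$, taking $l = \ord_{(\Z/d\Z)^\times}(p)$); then $n \mapsto p^l n$ is the identity on $\Z/d\Z$, so the underlying graded module of $M(l)$ is literally $M$. It remains to handle the $W(k)$-action twist: $M(l)$ carries the action twisted by $\frob^l$, so I need a $\frob^l$-semilinear automorphism of $M$ — but $\frob^l$ on $W(k) = W(k_0)[u^{\pm 1}]$ fixes $W(k_0)$ and sends $u \mapsto u^{p^l}$; since $p^l \equiv 1 \pmod d$ we have $u^{p^l} = u \cdot u^{p^l - 1}$ with $u^{p^l-1}$ a unit of degree $0$... but degree $0$ in $\Z/d\Z$ means it is a power of $u^d$. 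Hmm — so $\frob^l$ is multiplication-by-$u^{p^l-1}$ composed conjugation, and I can kill it by replacing $l$ with a multiple: raising to get $u^{p^{l}-1}$ a $p$-th power repeatedly will not obviously terminate, so instead I would take $l$ large enough that $\frob^l = \id$ on $W(k)$ outright, which holds once $p^l \equiv 1 \pmod d$ AND the residual $u$-twist vanishes. The cleanest fix: the subgroup of automorphisms of $W(k)$ generated by $\frob$ is finite (it is a quotient of $(\Z/d\Z)^\times$ acting on $u$, via $p$), so some power $\frob^l$ is the identity; for that $l$, $M(l) = M$ on the nose, action included, and the identity natural transformation works.

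**Main obstacle.** The genuinely delicate point is verifying that $\frob$ generates a finite cyclic group of automorphisms of $W(k) = W(k_0)[u^{\pm 1}]$ and pinning down the precise $l$ — one must confirm that $\frob^l(u) = u$ (not merely $u^{p^l}$ with $p^l \equiv 1 \bmod d$, which only gives $\frob^l(u) = u \cdot(\text{unit})$) actually holds for $l$ a multiple of the multiplicative order of $p$ modulo $d$, which requires knowing that the "unit" $u^{p^l-1}$ is in fact $1$ because $p^l - 1$ being divisible by $d$ means $u^{p^l-1} = (u^d)^{(p^l-1)/d}$ — this is a genuine positive power of $u^d$, not $1$, so $\frob$ does NOT have finite order after all on the nose! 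The correct resolution, and the subtle part of the proof, is that the natural isomorphism $M(l) \cong M$ need not be the identity: it should be multiplication by $u^{-(p^l-1)\cdot(\text{grading}/d)}$-type operators arranged compatibly, i.e. one uses the $u$-periodicity of the module itself (every element of degree $n$ equals $u^{(n-n')/d}$ times an element of degree $n'$) to absorb the twist. So the real content is: choose $l$ with $d \mid p^l - 1$, and define the natural iso $M(l) \to M$ on the summand in degree $n$ as multiplication by $u^{(p^l n - n)/d} = u^{n(p^l-1)/d}$, which lands correctly in degree $p^l n$ shifted back, and check this intertwines the $\frob^l$-twisted action with the original — this last check is exactly the statement that $\frob^l(\alpha) \cdot u^{n(p^l-1)/d} = u^{n(p^l-1)/d}\cdot \alpha$ for $\alpha \in W(k)$, which reduces to $\frob^l(u) = u^{p^l} = u \cdot u^{p^l-1}$ and the fact that $u^{p^l-1}$ commutes and cancels appropriately. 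I expect assembling this into a genuinely \emph{natural} isomorphism, uniform in $M$, to be the one step requiring care; everything else is bookkeeping.
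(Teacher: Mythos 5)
Your final construction coincides with the paper's own proof: take $l$ to be the multiplicative order of $p$ modulo $d$, write $p^l-1=nd$, and realize the natural isomorphism $M\cong M(l)$ in degree $j$ as multiplication by $u^{nj}$; the invertibility of $(1)$ then follows from $M(j_1)(j_2)=M(j_1+j_2)$, exactly as in the paper. One small correction that does not affect the argument: the Frobenius of $W(k_0)$ is not the identity for perfect $k_0$ (it lifts $x\mapsto x^p$ and is in general a nontrivial automorphism); only its invertibility, which perfectness guarantees, is needed for your inverse functor $(-1)$ and for the semilinearity bookkeeping.
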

\begin{proof}
Since $M(j_1)(j_2)=M(j_1+j_2)$, it suffices to show the second claim. Since $k$ is perfect, $p$ is a unit in $\Z/d\Z$ and hence there is a smallest $l \geq 1$ such that $p^l \equiv 1 \pmod{d}$, say $p^l-1 = nd$. Then the map
\[
M_j \to M(l)_j = M_{jp^l}; \quad m \to u^{nj}m
\]
is a $W(k)$-linear isomorphism. 
\end{proof}

Note that $M$ in fact obtains a natural $\Z[\frac 1p]$-grading by setting $M_{\frac n {p^k}} = M(-k)_n$.

If $k=k_0$, the functor $(1)$ is not an equivalence, but it has a right inverse $(-1)$ given by
\[
M(-1)_n = \begin{cases} 0;& p \nmid n\\
M_{\frac n p}; & p \mid n\end{cases}
\]
with the $W(k)$-linear structure given by $\alpha.m = \frob^{-1}(\alpha)m$. The Frobenius $\frob$ is invertible because $k=k_0$ is perfect. Confusingly, $(-1)$ being a right inverse means that $M(-1)(1)\cong M$.

In \cite[\textsection II.1.5]{fontaine:groupes-divisibles}, the group of co-Witt vectors $CW(A)$ is defined for an ungraded ring $A$, containing the subgroup of unipotent co-Witt vectors $CW^u(A) = \colim(W_0(A) \xrightarrow{V} W_1(A) \xrightarrow{V} \cdots)$. As a set,
\[
CW(A) = \left\{(a_i) \in \prod_{i \leq 0} A \mid (\dots,a_{-r-1},a_{-r}) \text{ is a nilpotent ideal for some $r\geq 0$}\right\},
\]
and $CW^u(A)$ consists of those $(a_i)$ with almost all $a_i=0$.

\begin{prop}\label{prop:gradedCW}
For a perfect graded field $k$ and a commutative, graded $k$-algebra $A$, define the set
\[
CW(A)_j = \Bigl(\prod_{i \leq 0} A(i) \Bigr)_j \cap CW(A^u),
\]
where, as before, $X^u$ denotes the object $X$ with the grading forgotten.
Then $CW(A)$ is a $W(k)$-module such that $CW(A)^u$ is a subgroup of $CW(A^u)$. It is stable under the Frobenius and Verschiebung operators, and contains as a submodule
\[
CW^u(A) = \colim(W_0(A) \xrightarrow{V} W_1(A(-1)) \xrightarrow{V} W_2(A(-2)) \xrightarrow{V} \cdots),
\]
where $V\colon W_n(A) = W_n(A(-1))(1) \to W_{n+1}(A(-1))$ is induced by the Verschiebung $V\colon W_n(A)(1) \to W_{n+1}(A)$.
\end{prop}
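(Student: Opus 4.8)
The plan is to bootstrap from the ungraded theory of co-Witt vectors in \cite{fontaine:groupes-divisibles} by checking, degree by degree, that the graded pieces $CW(A)_j$ assemble into a $W(k)$-module with the claimed structure. First I would verify that $CW(A)$ is well-defined, i.e. that an element $(a_i)_{i\le 0}$ with $a_i \in A_{jp^i}$ and $(\dots,a_{-r-1},a_{-r})$ generating a nilpotent ideal of $A^u$ really does give a consistent graded object; since the nilpotence condition only refers to $A^u$ and the degrees $jp^i$ are unambiguous once $j$ is fixed, $CW(A)_j$ is simply the intersection of a graded subgroup of $\prod_{i\le 0}A(i)$ with $CW(A^u)$, so this is immediate. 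The additive group structure is inherited from $CW(A^u)$ — one checks that the Witt addition formulas are homogeneous, which follows from the fact that the ghost components $w_n$ are homogeneous of degree $p^n j$ when the inputs sit in degrees $jp^i$, exactly as in the definition of the graded $W(A)$ earlier in the section.

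Next I would put the $W(k)$-module structure on $CW(A)$. The subtlety, compared to the ungraded case where $CW(A)$ is simply a $W(k_0)$-module via functoriality of $CW$ applied to $k_0 \to A_0$, is the grading twist: multiplication by $W(k)$ must shift degrees compatibly with the internal shifts $(i)$ appearing in the product $\prod_{i\le 0}A(i)$. Concretely, for $\alpha \in W(k)$ acting on the degree-$j$ piece, on the $i$-th coordinate (which lives in $A_{jp^i}$) one must act not by $\alpha$ but by its appropriate Frobenius twist, exactly as in the definition preceding Lemma~\ref{lemma:shiftperiodicity} where $M(j)$ is given the $W(k)$-linear structure $\alpha\cdot m = \frob(\alpha)m$. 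So the formula is: $\alpha$ acts on the $i$-th coordinate via $\frob^{i}(\alpha) = \frob^{-|i|}(\alpha)$, using that $\frob$ is invertible on $W(k)$ since $k$ is perfect. I would check this is a well-defined action (compatible across coordinates and with Witt addition) and that it restricts $CW(A)^u \hookrightarrow CW(A^u)$ as a subgroup, which is clear since forgetting the grading forgets the twist but not the underlying abelian group.

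Then I would define $F$ and $V$ on $CW(A)$ by the usual shift formulas $V(\dots,a_{-1},a_0) = (\dots,a_{-1},a_0,0)$ read in the appropriate direction and $F$ by the ghost-component characterization, and verify these are homogeneous of the expected degrees (multiplication/division by $p$ in the degree exponent) and land in $CW(A)$ — the nilpotence condition is preserved because $V$ extends the nilpotent ideal by a zero and $F$ shrinks it. Finally, for the submodule $CW^u(A)$, I would identify the colimit: the identification $W_n(A) = W_n(A(-1))(1)$ uses the right-inverse functor $(-1)$ from the discussion after Lemma~\ref{lemma:shiftperiodicity}, and I would check that the Verschiebung maps in the graded setting are exactly the ones induced from $V\colon W_n(A)(1)\to W_{n+1}(A)$ under this identification, so the colimit sits inside $CW(A)$ as the submodule of sequences that are eventually zero. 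The main obstacle I anticipate is bookkeeping the Frobenius twists correctly — making sure the $W(k)$-action, the maps $F$, $V$, and the colimit identification all use consistent conventions for the direction of $\frob$, so that e.g. $FV = p$ and $V$ is $W(k)$-semilinear in the expected way. Once the conventions are pinned down, each individual verification reduces to the corresponding ungraded statement in \cite{fontaine:groupes-divisibles} applied degree-wise.
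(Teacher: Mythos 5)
Your proposal is correct and follows essentially the same route as the paper: the only non-routine point is that the co-Witt addition, given by the universal polynomials $S_m$, preserves the grading, which both you and the paper deduce from the homogeneity of these polynomials when $x_i,y_i$ are placed in degree $jp^i$ (you argue via the ghost components, which is the standard way to see this); the remaining structure ($W(k)$-action with Frobenius twists, $F$, $V$, and the colimit description of $CW^u$) is then inherited degree-wise from Fontaine's ungraded theory, exactly as the paper treats it.
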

Note that $W_n(A)(-1) \hookrightarrow W_n(A(-1))$ is an isomorphism iff $k=k_0[u^{\pm 1}]$ or $A = A_0$.
\begin{proof}
We need to show that the addition in $CW(A^u)$ preserves the grading. If $S_m \in \Z[x_0,\dots,x_m,y_0,\dots,y_m]$ denotes the addition polynomial in $W_n(A)$, i.e. such that
\[
\Bigl((a_0,\dots,a_m)+(b_0,\dots,b_m)\Bigr)_m = S_m(a_0,\dots,a_m,b_0,\dots,b_m),
\]
then the addition on $CW(A^u)$ is defined in such a way that if $(\dots,a_{-1},a_0)+(\dots,b_{-1},b_0) = (\dots,c_{-1},c_0)$ then 
\[
c_{-n} = S_m(a_{-m-n},\dots,a_{-n},b_{-m-n},\dots,b_{-m})
\]
for $m \gg 0$, and it is shown in \cite[\textsection II.1.5]{fontaine:groupes-divisibles} that this gives a well-defined group structure. Since the polynomials $S_m$ are homogeneous when the variables $x_i$ and $y_i$ are given degree $jp^i$, the result follows.
\end{proof}

\subsection{Representability of Witt and co-Witt vectors}

Since $W(A)_j \cong \prod_{i=0}^\infty A_{jp^i}$ as sets, this set-valued functors is represented by
\[
(\Lambda_p)_j=k[\theta_{j,0},\theta_{j,1},\dots],
\]
where $|\theta_{j,i}| = jp^i$, and $W(A)$, as a graded object, is represented by the bigraded $k$-algebra $\Lambda_p = (\Lambda_p)_*$. Each $(\Lambda_p)_j$ obtains a Hopf algebra structure by the natural Witt vector addition on $W(A)_j$, and $\Lambda_p$ becomes a Hopf ring (cf. \cite{wilson:hopf-rings}) with a comultiplication
\[
\Lambda_p(A)_j \to \bigoplus_{j_1+j_2=j} \Lambda_p(A)_{j_1} \otimes \Lambda_p(A)_{j_2}.
\]
In other words, $\Lambda_p$ represents a graded ring object, even a plethory, in affine schemes. This is a graded version of the $p$-typical symmetric functions of \cite[II.13]{borger-wieland:plethystic}.

The co-Witt vectors are not representable, but their restriction to $\alg_k$, i.e. finite-dimensional $k$-algebras, is ind-representable, that is, $CW_k$ is a formal group. In the ungraded case, this is described in \cite[\textsection II.3--4]{fontaine:groupes-divisibles}. In our graded case, $CW(A)_j$ is represented by the profinite ring
\[
(\Reg{CW_k})_j = 
\lim_{m,n \geq 0} k[x_{j,0},x_{j,-1},\dots]/\Bigl(x_{j,-n},x_{j,-n-1},\dots,)^m
\]
with $|x_{j,i}| = jp^i$, which in the case $k=k_0$ and $jp^i \not\in\Z$ is to be understood as $x_{j,i}=0$. By naturality of the co-Witt vector addition, $CW_k$ thus becomes a (graded) formal group.


\subsection{Witt vectors of \texorpdfstring{$p$}{p}-polar rings}

Observe that the definition of the abelian group of graded Witt vectors makes sense if $A$ merely is a graded $p$-polar ring. Moreover, if $A$ is a $p$-polar graded $k$-algebra, for a commutative graded ring $k$, then $W(A)$ is a $W(k)$-module and in fact a $p$-polar graded $W(k)$-algebra.

\begin{lemma} \label{lemma:wittonptypical}
If $k$ is a perfect graded field then the Witt vector functor restricts to
\[
W_n\colon \Pol_{(j)}(k) \to \Mod_{(j)}(W(k)),
\]
and the Frobenius and Verschiebung operators restrict to $\Mod_{(j)}(W(k))$.
\end{lemma}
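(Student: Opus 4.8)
The plan is to realize $W_n(A)$ as a subobject of the genuine graded Witt ring $W_n(\hull(A))$, where $\hull(A)$ is the graded commutative $k$-algebra of the hull proposition (commutative in the evenly graded or characteristic-$2$ setting in which we use it), so that $\iota\colon A\hookrightarrow\hull(A)$, and then to show that this subobject is a graded $W_n(k)$-submodule supported in the correct degrees and stable under $F$ and $V$. The crucial input is Prop.~\ref{prop:imageofiotacycliccase} (equivalently Cor.~\ref{cor:retract}), which identifies $\iota(A)$ with the part of $\hull(A)$ concentrated in the degrees lying in the orbit $\{jp^i\}$ of $j$ under multiplication by $p$: any homogeneous element of $\hull(A)$ of such a degree already lies in $A$.

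Next I would do the grading bookkeeping. As a graded set $W_n(A)_{j'}=\prod_{i=0}^{n}A_{j'p^i}$, so $W_n(A)_{j'}\ne 0$ forces some $j'p^i$ into the support of $A$; since $A=A_{(j)}$ with $p\nmid j$, and $k$ (in the periodic case) is supported in degrees divisible by $d$, this happens precisely when $j'$ lies in the orbit of $j$. Hence the underlying graded group of $W_n(A)$ is of the type required for $\Mod_{(j)}(W(k))$, as are those of $W_n(A)(1)$, $W_{n-1}(A)(1)$, etc., because the orbit of $j$ is closed under multiplication by $p$; and the formulas for $V\colon W_n(A)(1)\to W_{n+1}(A)$ and $F\colon W_n(A)\to W_{n-1}(A)(1)$ manifestly match degrees coordinate by coordinate.

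The heart is that the structure maps restrict from $W_n(\hull(A))$ to $W_n(A)$. The Witt addition polynomials $S_m$, the negation polynomials, the multiplication polynomials $P_m$ giving the $W_n(k)$-action, and the Frobenius polynomials are all isobaric: assigning the $l$-th Witt component degree $j'p^l$ (and the $l$-th scalar component degree $j''p^l$, divisible by $d$), $S_m$ is homogeneous of degree $j'p^m$, $P_m$ of degree $(j'+j'')p^m$, and the $l$-th Frobenius polynomial of degree $j'p^{l+1}$ — all of which follow from the homogeneity of the ghost polynomials $w_m=\sum_i p^ix_i^{p^{m-i}}$ and the usual triangular recursions. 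Consequently a monomial of $S_m$ evaluated on the components of $a,b\in W_n(A)_{j'}$ is a homogeneous product of elements of $A$ whose degree $j'p^m$ lies in the orbit of $j$; by Cor.~\ref{cor:retract} it lies in $\iota(A)$, so $S_m(a,b)\in A_{j'p^m}$, and $W_n(A)$ is closed under addition and negation. For the action, a monomial of $P_m$ splits as a scalar from $k\subseteq\hull(A)$ (a product of the $\lambda_l$, of degree divisible by $d$) times a homogeneous product of elements of $A$ whose degree, being the isobaric total $(j'+j'')p^m$ minus that scalar degree, again lies in the orbit of $j$ modulo $d$ and hence belongs to $\iota(A)$; since $A$ is a $k$-module and $\iota$ is $k$-linear, multiplying back by the scalar stays in $A$. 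The same reasoning shows $F$ lands in $W_{n-1}(A)(1)$. This gives the functor $W_n\colon\Pol_{(j)}(k)\to\Mod_{(j)}(W(k))$ and the stability of $F$ and $V$; their $W(k)$-semilinearity for the twisted module structures is the classical one, unaffected by the grading, and the case $n=\infty$ follows by passing to the limit.

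I expect the main obstacle to be precisely the reduction to Prop.~\ref{prop:imageofiotacycliccase}/Cor.~\ref{cor:retract} in the periodic case $k=k_0[u^{\pm1}]$: one has to be sure that no monomial of $S_m$, $P_m$, or the Frobenius polynomials, evaluated on elements of $A$, produces a product landing in a degree of $\hull(A)$ outside the orbit of $j$ modulo $d$, and that for $P_m$ the scalar part and the $A$-part of each monomial genuinely separate. Both hinge on the isobaric property together with the fact that the scalar variables contribute degrees divisible by $d$ — routine once stated carefully, but easy to botch if one is careless about the grading. The degenerate case $d\mid j$, and (for $p>2$) the odd-degree summand, reduce to the ungraded situation covered by Lemma~\ref{lemma:olddef}.
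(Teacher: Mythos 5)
The paper's own proof of this lemma is the single line ``Obvious from the definition,'' so your detailed verification goes well beyond what the author records. Your strategy --- embed $A$ into $\hull(A)$, use the isobaric property of the Witt addition, multiplication, and Frobenius polynomials, and pull the structure maps back along $\iota$ via Prop.~\ref{prop:imageofiotacycliccase} --- is exactly the right way to make the claim precise, and the degree bookkeeping for $W_n(A)(1)$, $F$, and $V$ is correct.

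One justification needs tightening in the periodic case $k=k_0[u^{\pm1}]$. Several times you conclude that a product lies in $\iota(A)$ because its degree ``lies in the orbit of $j$'' modulo $d$. That is not by itself sufficient: Prop.~\ref{prop:imageofiotacycliccase} says a product of $n_\alpha$ factors of degree $jp^\alpha$ lies in $\im(\iota)$ iff $\sum_\alpha n_\alpha p^\alpha\equiv p^m\pmod{p^{h}-1}$, and since $d/\gcd(d,j)$ may be a proper divisor of $p^h-1$, the $\Z/d\Z$-degree of a product can land in the orbit of $j$ without this congruence holding. For instance with $p=3$, $d=8$, $j=2$ one has $h=2$, and a product of five degree-$2$ elements has degree $10\equiv 2\pmod 8$, which is in the orbit $\{2,6\}$, yet $5\not\equiv 3^m\pmod 8$ for any $m$, so the product is not in $\im(\iota)$. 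The repair is already contained in your own isobaricity claim: each monomial of $S_m$ (and, after separating the scalar variables, the $A$-part of each monomial of $P_m$ and of the Frobenius polynomials) satisfies the \emph{exact integer} identity $\sum_i a_i p^i=p^m$ on the weights, which implies the congruence modulo $p^h-1$ and hence multipliability by Prop.~\ref{prop:imageofiotacycliccase}; the coarser ``degree in the orbit'' criterion should simply be replaced by this throughout. With that substitution your argument is complete.
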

\begin{proof}
Obvious from the definition.
\end{proof}

\begin{corollary}\label{cor:Wittretract}
Let $k$ be a perfect graded field and $A=A_{(j)}$ a $p$-typical polar $k$-algebra with $d \nmid j$. Denote the regrading equivalence $\Mod_{(j)}(d) \to \Mod_{(1)}(p^{h(j,d)}-1)$ of Cor.~\ref{cor:primefields} by $\phi$. Then
\[
W_n(A) = \phi^{-1} \Bigl(W_n(\hull(\phi A))_{(1)}\Bigr).
\]
\end{corollary}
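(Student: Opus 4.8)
The plan is to obtain this by applying the Witt vector functor $W_n$ to the isomorphism of Corollary~\ref{cor:retract} and checking that $W_n$ is compatible with the two operations appearing on the right-hand side: the regrading equivalence $\phi$ and the passage to the $p$-typical degree-$1$ summand $(-)_{(1)}$.

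First I would record that $W_n$ commutes with $\phi$. The regrading of Lemma~\ref{lemma:stupidmorita} and Corollary~\ref{cor:primefields} merely relabels the homogeneous piece sitting in internal degree $jp^i$ as sitting in internal degree $p^i$; since $W_n(B)_m = \prod_{i=0}^n B_{mp^i}$ is assembled componentwise out of exactly these pieces, one gets $\phi(W_n(B)) = W_n(\phi B)$ as graded abelian groups, naturally in $B$. By Lemma~\ref{lemma:wittonptypical} both sides carry Frobenius-twisted module structures over the relevant graded Witt ring ($W(k_0)[u^{\pm 1}]$ before the regrading, $W(k_0)[v^{\pm 1}]$ after, with $|u|=d$ and $|v|=p^{h(j,d)}-1$), and I would check that these correspond under the relabeling, as do the Frobenius and Verschiebung operators. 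Second, $W_n$ commutes with $(-)_{(1)}$: for any graded commutative or graded $p$-polar algebra $B$ over $k(p^{h}-1)$ one has $W_n(B)_{(1)} = W_n(B_{(1)})$, because $mp^i$ lies in the $p$-typical degree-$1$ residue class exactly when $m$ does (as $p$ is invertible modulo $p^{h}-1$), so $W_n(B)_m$ already depends only on $B_{(1)}$ whenever $m$ is in that class.

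Granting these two points, the corollary follows at once. Corollary~\ref{cor:retract} provides an isomorphism of $p$-polar $k(p^{h(j,d)}-1)$-algebras $\phi A \xrightarrow{\ \sim\ } \hull(\phi A)_{(1)}$; applying $W_n$ together with the two compatibilities gives
\[
\phi\bigl(W_n(A)\bigr) \;=\; W_n(\phi A) \;\cong\; W_n\bigl(\hull(\phi A)_{(1)}\bigr) \;=\; W_n\bigl(\hull(\phi A)\bigr)_{(1)},
\]
and applying $\phi^{-1}$ yields $W_n(A) = \phi^{-1}\bigl(W_n(\hull(\phi A))_{(1)}\bigr)$.

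The part I expect to be the main — though modest — obstacle is the first compatibility: verifying that the regrading equivalence of Lemma~\ref{lemma:stupidmorita}, which matches modules over $k_0[u^{\pm 1}]$ with modules over $k_0[v^{\pm 1}]$, genuinely transports the Frobenius-twisted $W(k)$-action and the operators $F$, $V$, i.e. that $\phi$ is the restriction of an equivalence of the corresponding categories of graded Witt modules that is intertwined by $W_n$. This should require nothing beyond unwinding the explicit descriptions already used in the proofs of Lemma~\ref{lemma:shiftperiodicity} and Lemma~\ref{lemma:wittonptypical}.
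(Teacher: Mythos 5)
Your proposal is correct and follows essentially the same route as the paper: reduce to the case $j=1$, $d=p^{h}-1$ via the compatibility of $W_n$ with the regrading equivalence (the paper invokes Lemma~\ref{lemma:wittonptypical} for this), then apply $W_n$ to the isomorphism $A \to \hull(A)_{(1)}$ of Cor.~\ref{cor:retract} and use $W_n(\hull(A)_{(1)}) = W_n(\hull(A))_{(1)}$. You simply spell out the two compatibility checks that the paper leaves implicit.
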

\begin{proof}
By the previous lemma, it suffices to consider the case $j=1$ and $d=p^h-1$.
The composite $A \to \hull(A) \to \hull(A)_{(1)}$ of $p$-polar algebras is an isomorphism by Cor.~\ref{cor:retract} and hence induces an isomorphism
\[
W_n(A) \to W_n(\hull(A)) \to W_n(\hull(A)_{(1)}) = W_n(\hull(A))_{(1)}.
\]
\end{proof}

If $A$ is a $p$-polar $k$-algebra over a perfect graded field $k$ (the possibility $k=k_0$ is included), we can generalize the ungraded construction of the group of  co-Witt vectors \cite{bauer:p-polar,fontaine:groupes-divisibles,bauer-carlson:tensorproduct} to the graded context as follows. 

\begin{defn}
Let $k=k_0[u^{\pm 1}]$ be a perfect graded field of characteristic $p$ and $A=A_{(j)}$ a $p$-typical polar $k$-algebra with $d = |u| \nmid j$.

Define the graded $W(k)$-module of co-Witt vectors $CW(A)$ by
\[
CW(A) = \phi^{-1}\Bigl(CW(\hull(\phi A))_{(1)}\Bigr),
\]
where $CW(\hull(\phi A))$ is defined as in Prop.~\ref{prop:gradedCW}.

For $A=A_{(j)}$ with $d \mid j$, we have that $A \cong A_0 \otimes_{k_0} k$ and define
\[
CW(A) = CW(A_0) \otimes_{W(k_0)} W(k),
\]
where the ungraded $CW(A_0)$ was defined in \cite{bauer:p-polar}.

For an arbitrary graded $p$-polar $k$-algebra $A=\prod_j A_{(j)}$, define
\[
CW(A) = \bigoplus_{j} CW(A_{(j)}).
\]

The submodule $CW^u(A)$ is defined in the same way as in Prop.~\ref{prop:gradedCW} and agrees, by Cor.~\ref{cor:Wittretract}, with the subset of $(\dots,a_1,a_0) \in CW(A)$ almost all of whose elements are zero.
\end{defn}
Note that the Frobenius and Verschiebung operations are well-defined on $CW(A)$ and $CW^u(A)$ for $A$ a $p$-polar $k$-algebra. 

This construction agrees with the one given in \cite{bauer:p-polar} when $k=k_0$ and $A=A_0$. If $k=k_0$ with arbitrary commutative graded $A$, note that for $j \neq 0$ almost all factors of
\[
\Bigl(\prod_{i \leq 0} A(i)\Bigr)_j = \prod_{i \leq 0} A_{jp^i}
\]
are zero (namely those where $jp^i \not\in \Z$), hence $CW(A)_j = CW^u(A)_j$ for $j \neq 0$ and $CW(A)_0 = CW(A_0)$, in other words, $CW(A) \cong CW^u(A) \oplus_{CW^u(A_0)} CW(A_0)$ as abelian groups. Having in mind the as yet unproven Theorem~\ref{thm:dieudonneoffree}, this corresponds to the fact that a connected, graded abelian Hopf algebra must be conilpotent.

\begin{example}
Let $k=k_0$ be perfect of characteristic $p$ and $A = k \langle x^{p^i} \mid i \geq 0\rangle$ the free $p$-polar algebra on a single generator $x$ in degree $2$. Then
\[
W_n(A)_{2p^i} = \{(a_0 x^{p^i},a_1 x^{p^{i+1}},\dots,a_n x^{p^{i+n}}) \mid a_i \in W(k)\} \cong W_n(k)
\]
and $W_n(A)_j=0$ if $j$ is not twice a power of $p$. The Verschiebung is given by
\[
V\colon W_n(A)_{2p^i} \to W_{n+1}(A(-1))_{2p^i}, \quad (a_0,\dots,a_n) \mapsto (0,a_0,\dots,a_n).
\]
Since $A_0=0$, we have that $CW(A) = CW^u(A) = \colim (W_0(A) \xrightarrow W_1(A(-1)) \xrightarrow \cdots)$ and thus
\[
CW(A)_{2p^i} = \{ (\cdots,a_{-1},a_0) \mid a_j \in A_{2p^{i+j}}\},
\]
which is understood to mean $a_j=0$ if $2p^{i+j} \not\in\Z$. Thus $CW(A)_{2p^i} = W_i(k)$, and the Frobenius and Verschiebung maps are given by
\[
V\colon CW(A)_{2p^i} \to CW(A)_{2p^{i-1}} \text{ the restriction map $W_i(k) \to W_{i-1}(k)$}
\]
and
\[
F\colon CW(A)_{2p^i} \to CW(A)_{2p^{i+1}}, \text{ the multiplication-by-$p$ map $W_i(k) \to W_{i+1}(k)$.}
\]
\end{example}

\section{Graded Dieudonné theory}

Throughout, let $k$ be a perfect graded field of characteristic $p$.

\begin{defn}
A \emph{graded Dieudonné module} over $k$ is a graded $W(k)$-module $M$ together with maps of $W(k)$-modules
\[
F\colon M \to M(1) \quad \text{and} \quad V\colon M(1) \to M
\]
satisfying $FV=p$ and $VF=p$.
We denote the category of Dieudonné modules (with the obvious definition of morphism) by $\DMod{k}$.

We call a Dieudonné module $M$ \emph{$p$-adic} if for every submodule $W(k)$-submodule $N < M$ of finite length, the submodule spanned by $\{V^n(N(n)) \mid n \geq 0\}$ is also of finite length. In other words, $M$ is $p$-adic if it is a colimit of finite-length $W(k)$-submodules $N$ closed under $V$ in the sense that $V(N(1)) \subseteq N$.
We call $M$ \emph{unipotent} if for any finite-length $W(k)$-submodule $N$ of $M$, $V^n(N) = 0$ for $n \gg 0$. We denote the full subcategories of $p$-adic and unipotent Dieudonné modules by $\DMod{k}^p$ and $\DModV{k}$, respectively.

Moreover, a Dieudonné module $M$ is called $F$-profinite if $M$ is profinite as a $W(k)$-module and has a fundamental system of neighborhoods consisting of $W(k)$-modules $N$ closed under $F$, i.e. such that $F(N) < N(1)$. The module $M$ is called \emph{connected} if the profinite completion of $F^{-1}M$ is trivial and \emph{étale} if $M \to F^{-1}M$ is an isomorphism. Denote the category of $F$-profinite Dieudonné modules by $\DDModF{k}$ and the full subcategories of connected (resp. étale) Dieudonné modules by $\DDModFc{k}$ (resp. $\DDModFet{k}$).
\end{defn}

Now consider the categories $\AbSch{k}^p$ resp. $\FGps{k}^p$. The category $\AbSch{k}^p$ is anti-equivalent to the full subcategory of bicommutative Hopf algebras $H$ over $k$ such that $H = \colim H[p^n]$, where $H[p^n]$ is the Hopf algebra kernel of the map $[p^n]\colon H \to H$. Similarly, the category $\FGps{k}^p$ is anti-equivalent to the full subcategory of the category of bicommutative complete Hopf algebras consisting of those $H$ such that $H=\lim \coker [p^n]$. The categories $\AbSch{k}$ and $\FGps{k}$ are anti-equivalent by Cartier duality, as are the categories $\AbSch{k}^p$ and $\FGps{k}^p$.

By \cite[Prop.~A.4]{bousfield:p-adic-lambda-rings}, any bicommutative Hopf algebra (and dually, every bicommutative complete Hopf algebra) over a field of characteristic $p>2$ splits naturally into an even part and an odd part:
\[
H = H_{e} \otimes H_{o},
\]
where $H_e$ is concentrated in even degrees and $H_o$ is an exterior algebra on primitive generators in odd degrees. The functor of primitives gives an equivalence between odd formal groups and oddly graded $k$-modules inverse to the exterior algebra functor. The odd part carries therefore very little information.

An odd Hopf algebra (or odd formal Hopf algebra) is automatically $p$-adic because $[p]$ is the trivial map on such Hopf algebras. 

This leads us to a simple proof of the odd part of Thm.~\ref{thm:freeschemefactorization}:
\begin{prop}\label{prop:oddfree}
Let $k$ be a graded field of characteristic $p>2$. For a graded commutative $k$-algebra $A$, denote by $\Fr_{o}(A)$ the odd part of $\Fr(A)$, i.e. $\Reg{\Fr_{o}(A)} = (\Reg{\Fr(A)})_{o}$. Then we have factorizations
\[
\begin{tikzcd}
\Alg_k^\op \arrow[r,"\Fr_{o}"] \ar[dr,swap,"(-)_{o}"] & \AbSch{k}^p\\
& (\Mod_k)_{o}^\op \ar[u,"\hat \Fr"],\\
\end{tikzcd}
\quad \text{and} \quad
\begin{tikzcd}
\alg_k^\op \arrow[r,"\Fr_{o}"] \ar[dr,swap,"(-)_{o}"] & \FGps{k}^p\\
& (\operatorname{mod}_k)_{o}^\op \ar[u,"\hat \Fr"],\\
\end{tikzcd}
\]
where the diagonal map assigns to $A$ the odd part of the underlying $k$-module $A$ and $\Reg{\tilde \Fr(M)} = \bigwedge(M)$.
\end{prop}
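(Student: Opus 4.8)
The plan is to reduce everything to the structure theory for bicommutative (complete) Hopf algebras over a field of characteristic $p>2$ recalled just above, and then to observe that the odd part of a free object is forced by a universal property. First I would fix $A \in \Alg_k$ and apply the splitting $\Reg{\Fr(A)} = (\Reg{\Fr(A)})_e \otimes (\Reg{\Fr(A)})_o$; since $(\Reg{\Fr(A)})_o$ is, by that structure theorem, an exterior algebra on a space of primitive generators in odd degrees, the functor $\Fr_o$ lands in the subcategory of odd Hopf (resp. complete Hopf) algebras, which under Cartier/Hopf duality corresponds to the subcategory of $\AbSch{k}^p$ (resp. $\FGps{k}^p$) consisting of odd objects; recall such objects are automatically $p$-adic since $[p]=0$ on them. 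The equivalence ``primitives $\leftrightarrow$ oddly graded $k$-module, with inverse $\textstyle\bigwedge(-)$'' then identifies this subcategory with $(\Mod_k)_o^{\op}$ (resp. $(\mathrm{mod}_k)_o^{\op}$), so it suffices to produce the factorization through the diagonal functor $(-)_o$ on that side.

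The key step is to compute the odd part of the free object. I would argue that for any odd Hopf algebra $\bigwedge(N)$ (with $N \in (\Mod_k)_o$) one has a natural identification
\[
\Hom_{\AbSch{k}^p}\!\bigl(\Spec A,\,\textstyle\bigwedge(N)\bigr)\;\cong\;\Hom_{\AbSch{k}^p}\!\bigl(\Fr(A),\,\textstyle\bigwedge(N)\bigr)\;\cong\;\Hom_{\Mod_k}(N, A_o),
\]
where the first isomorphism is the defining adjunction of $\Fr$ and the last follows because an algebra map $\bigwedge(N) \to A$ is determined by (and freely determined by) a $k$-linear map $N \to A$, i.e.\ by a map $N \to A_o$ since $N$ is concentrated in odd degrees and a Hopf-algebra map must send primitives to primitives — but every element of odd degree in a graded-commutative $k$-algebra squares to zero, hence extends uniquely. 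Dually, for the formal case one uses complete Hopf algebras and the analogous adjunction. This exhibits $\bigwedge\colon (\Mod_k)_o^{\op}\to \AbSch{k}^p$ as the left adjoint of the composite ``take the odd part of the underlying module'', and therefore $\Fr_o(A)$, being the odd part of a left-adjoint value, agrees with $\hat\Fr(A_o)$. The naturality in $A$ is immediate from the naturality of the splitting and of the adjunction isomorphisms.

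The main obstacle I expect is bookkeeping the duality dictionaries carefully: one must check that the even/odd splitting of Hopf algebras is compatible with the anti-equivalences $\AbSch{k}^p \simeq \{\text{Hopf algebras}\}^{\op}$ and $\FGps{k}^p \simeq \{\text{complete Hopf algebras}\}^{\op}$ (so that ``odd part'' on one side really does correspond to ``odd part'' on the other), and that the left adjoint $\Fr$ restricted to the odd direction is genuinely computed by $\bigwedge$ rather than by some larger construction — this is exactly where one uses that odd-degree elements square to zero, so that the free graded-commutative algebra on an oddly graded module is already exterior, and that the odd part of a bicommutative Hopf algebra has primitive generators, so no higher relations intervene. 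Everything else is the formal manipulation of adjunctions and the splitting, which I would state but not belabor.
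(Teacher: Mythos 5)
Your proposal is correct and is essentially the paper's own argument: both prove the statement by testing $\Fr(A)$ against an arbitrary odd Hopf algebra $\bigwedge(N)$, chaining the free/cofree adjunction with the fact that $\bigwedge(N)$ is the free graded-commutative algebra on the oddly graded module $N$ (odd elements square to zero for $p>2$), and concluding $P\Cof_{\odd}(A)\cong A_o$, hence $\Cof_{\odd}(A)\cong\bigwedge(A_o)$, by Yoneda. The only differences are presentational (you work on the group-scheme side and spell out the duality dictionary; the paper works directly with Hopf algebras), plus a harmless notational slip where you write $\Hom_{\AbSch{k}^p}(\Spec A,-)$ for what is really a Hom of schemes feeding into the adjunction.
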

\begin{proof}
Let $H = \bigwedge(M)$ be an odd Hopf algebra, where $M$ is some oddly graded $k$-vector space, and $A \in \Alg_k$. Then
\begin{multline*}
\Hom_{\Mod_k}(M,A) = \Hom_{\Alg_k}(H,A) \\
= \Hom_{\Hopf{k}}(H,\Cof_{\odd}(A)) = \Hom_{\Mod_l}(M,P\Cof_{\odd}(A)).
\end{multline*}

Thus $P\Cof_{\odd}(A)\cong A$ and thus $\Cof_{\odd}(A) \cong \bigwedge(A)$.
\end{proof}

We will thus from now on focus only on evenly graded, commutative Hopf algebras (and affine and formal groups). The arbitrarily graded situation in characteristic $2$ is completely analogous.

\begin{prop} \label{prop:formalgroupsplitting}
Every formal group $\hat G$ splits naturally as $\hat G_0 \times \hat G_c$, where $\hat G_0$ is \'etale and $\hat G_c$ is connected. Dually, every bicommutative Hopf algebra $H$ splits as $H = H_m \otimes H_u$ where $H_u$ is unipotent (conilpotent) and $H_m$ is of multiplicative type.
\end{prop}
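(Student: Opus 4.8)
The plan is to establish the Hopf-algebra statement and then dualize via Cartier duality, since the two assertions are equivalent under the anti-equivalence between $\FGps{k}$ and the category of complete bicommutative Hopf algebras. Let $H$ be a bicommutative Hopf algebra over the perfect graded field $k$. The first step is to produce the multiplicative part $H_m$: I would define $H_m$ to be the colimit of the Hopf-algebra images of all maps $k[\Z/p^n] \to H$, equivalently the sub-Hopf-algebra generated by all group-like elements of the dual; over a perfect field this is a sub-Hopf-algebra of multiplicative type (a filtered colimit of duals of finite diagonalizable group schemes), and it is preserved by the grading since group-like elements are homogeneous of degree $0$. The second step is to show $H_m$ is a \emph{direct tensor factor}: I would construct a retraction $H \to H_m$ of Hopf algebras, using that $k$ is perfect so that the Frobenius is bijective, and that over such a field the category of affine abelian group schemes is generated under the relevant limits/colimits by group schemes that are extensions of \'etale, multiplicative, and unipotent pieces — the classical fact (for finite group schemes, \cite{demazure-gabriel} or \cite{fontaine:groupes-divisibles}) that a finite commutative group scheme over a perfect field splits canonically as a product of its \'etale-\'etale, \'etale-multiplicative, multiplicative-multiplicative (after base change), and infinitesimal-unipotent parts. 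Taking $H_u$ to be the kernel Hopf algebra of the retraction $H \to H_m$ gives $H \cong H_m \otimes H_u$, and $H_u$ is unipotent (conilpotent) because it contains no nontrivial group-likes.

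For the formal-group form, $\hat G_0$ corresponds to the \'etale quotient and $\hat G_c$ to the connected part: set $\hat G_c$ to be the connected component of the identity (the formal spectrum of the quotient of $\Reg{\hat G}$ by the nilradical's complement — concretely, $\Reg{\hat G_c}$ is the maximal local factor), and $\hat G_0$ the \'etale quotient $\hat G / \hat G_c$. The splitting $\hat G \cong \hat G_0 \times \hat G_c$ then follows from the standard connected-\'etale sequence for (formal) group schemes over a perfect field, which is split precisely because $k$ is perfect; naturality is automatic since both $\hat G_0$ and $\hat G_c$ are defined by functorial constructions (identity component and \'etale quotient). One checks the grading is respected because the identity component is a graded sub-formal-scheme (the augmentation ideal and nilradical are homogeneous).

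The main obstacle I anticipate is verifying that the multiplicative part splits off \emph{as a tensor factor} rather than merely sitting inside as a sub-Hopf-algebra, and doing so \emph{naturally} and \emph{compatibly with the grading}. In the ungraded case this is the well-known decomposition of commutative affine group schemes over a perfect field into multiplicative times unipotent; the content here is to check that passing to colimits of finite sub-Hopf-algebras (so that one is working with genuine affine group schemes, not just formal ones) preserves the canonical splitting, and that the degree-$0$ hypothesis on group-like elements forces $H_m$ to be purely a degree-$0$ phenomenon, so the grading on $H_u$ is just the restricted grading. I would handle this by reducing to finite sub-Hopf-algebras $H' \subseteq H$ closed under the relevant structure, applying the classical finite splitting $H' \cong H'_m \otimes H'_u$ functorially in $H'$, and passing to the colimit; the perfectness of $k$ (in the graded sense of the paper, i.e. $p \nmid d$) is exactly what makes these finite splittings exist and be natural.
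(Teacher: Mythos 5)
Your treatment of the formal-group half is exactly the paper's argument: define $\hat G_0(A) = \hat G(A/\nil(A))$, let $\hat G_c$ be the kernel (the pro-local factor of $\Reg{\hat G}$ with maximal ideal the augmentation ideal), and split the connected--\'etale sequence using the fact that over a perfect field the maximal \'etale subalgebra $A^e \hookrightarrow A$ maps isomorphically onto $A/\nil(A)$, which gives a section of $\Reg{\hat G} \to \Reg{\hat G_0}$. Where you diverge is the Hopf-algebra half: the paper gets that statement for free, since the two assertions are exchanged by Cartier duality ($\AbSch{k}$ is anti-equivalent to $\FGps{k}$, with connected corresponding to unipotent and \'etale to multiplicative), whereas you re-prove it directly via the multiplicative-times-unipotent decomposition of affine group schemes over a perfect field and a colimit over finite sub-Hopf-algebras. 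That route does work --- the classical splitting is functorial, hence passes to filtered colimits --- but it is strictly more labor, and your sketch has two soft spots worth flagging: (i) the multiplicative part is cut out by the group-like elements of $H \otimes_k \bar k$ (descended by Galois invariance), not of ``the dual''; and (ii) group-like elements of a graded Hopf algebra need not be homogeneous --- only the homogeneous ones are forced into degree $0$ by $\psi(g)=g\otimes g$ --- so the claim that $H_m$ is automatically a degree-$0$ phenomenon requires the argument the paper uses elsewhere (homogeneity of the nilradical, reduction via Galois descent to $|u|=2$ and thence to the ungraded case) rather than a degree count on group-likes. None of this is fatal: if you simply invoke Cartier duality after your (correct) formal-group argument, the proposition is proved.
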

We denote the full subcategory of connected formal groups by $\FGpsc{k}$ and the corresponding full subcategory of unipotent Hopf algebras by $\Hopfu{k}$.
\begin{proof}
In the ungraded context, Fontaine proves this in \cite[\textsection I.7]{fontaine:groupes-divisibles}. We give an argument that works in the graded case for the reader's convenience, although no new ideas are needed.

Let $A$ be a finite-dimensional $k$-algebra. We say that $A$ is \'etale if it is a (finite) product of (finite) field extensions of $k$, where a field extension of a graded field $k$ is of course just an inclusion $k < k'$ of graded fields. Clearly, $A/\nil(A)$ is \'etale for any finite $k$-algebra $A$.

The formal group $G$ is called connected if $\Reg{G}$ is a pro-local $k$-algebra, and it is called \'etale if $\Reg{G}$ is pro-\'etale as a $k$-algebra.

Define $G_0$ by $G_0(A) = G(A/\nil(A))$ and let $G_c$ be the kernel of $G \to G_0$. Then $\Reg{G_c}$ is the pro-local ring having as maximal ideal the kernel of the counit $\epsilon\colon \Reg{G} \to k$.

We thus get a short exact sequence of formal groups
\begin{equation}\label{eq:connected-etale-splitting}
0 \to G_c \to G \to G_0 \to 0.
\end{equation}

If $A^e$ denotes the maximal \'etale subalgebra of $A$, we see that the map
\[
A^e\to A \to A/\nil(A)
\]
is an isomorphism and $\Reg{G_0} = \Reg{G}/\nil(\Reg{G})$, so $G_0$ is indeed \'etale, and the projection map $\Reg{G} \to \Reg{G_0}$ splits \eqref{eq:connected-etale-splitting}.
\end{proof}

For a graded perfect field $k=k_0[u^{\pm 1}]$, the separable (=algebraic) closure is given by $\bar k = \bar{k_0}[v^{\pm 1}]$, where $|v|=2$ and $u=v^{\frac d 2}$. (For $p=2$, $|v|=1$ and $u=v^d$. We will leave the necessary adjustments in this case to the reader.) We define the Galois group $\Gal(k' \mid k)$ of a field extension $k < k'$ to be the group of automorphisms of $k'$ fixing $k$. Since such a field extension is given by $k_0[u^{\pm 1}] < k'_0[v^{\pm 1}]$ with $u=v^e$, we find that
\[
\Gal(k'\mid k) = \Gal(k'_0 \mid k_0) \ltimes \mu_e(k'_0),
\]
where $\mu_e(k'_0) = \{\zeta \in k'_0 \mid \zeta^e=1\}$ acts by fixing $k'_0$ and mapping $v$ to $\zeta v$, and the Galois group $\Gal(k'_0 \mid k_0)$ acts in the natural way on $\mu_e(k'_0)$. We see that $k < k'$ is Galois iff $k_0 < k'_0$ is Galois and $k'_0$ contains a primitive $e$th root of unity. In particular, the profinite absolute Galois group is
\[
\Gamma = \Gal(k) = \Gal(\bar k \mid k) = \Gal(\bar k_0 \mid k_0) \ltimes \Z/\tfrac d 2 \Z. 
\]

To formulate Galois descent, we will consider $2$-functors
\[
\mathcal C \colon \Sep \to \Cat
\]
from the category of graded fields of characteristic $p$ and finite, separable field extensions to the $2$-category of categories. In particular, for any extension $k < k'$, we obtain an action of $\Gal(k'|k)$ on $\mathcal C(k')$. We say that $\mathcal C$ satisfies Galois descent, and call it a Galois descent category, if the natural map
\[
\mathcal C(k) \to \mathcal C(k')^{\Gal(k' \mid k)}
\]
is an equivalence of categories for every Galois extension $k \to k'$ in $\Sep$, where $\mathcal C(k')^{\Gal(k' | k)}$ denotes the $2$-categorical (homotopy) fixed points. This encapsulates the usual definition, as an object in $\mathcal C(k')^{\Gal(k' \mid k)}$ is an object with a $\Gal(k' \mid k)$-semilinear action.

\begin{lemma}[graded Galois descent] \label{lemma:galoisdescent}
The following functors are Galois descent categories, where in each case, $i\colon k \to k'$ is a morphism in $\Sep$:
\begin{enumerate}
	\item the functor $\Alg\colon k \mapsto \Alg_{k}$ with $\Alg(i)(A) = A \otimes_{k} k'$;
	\item the functor $\AbSch{}\colon k \mapsto \AbSch{k}^{\ev}$ with $\AbSch{}(i)(G) = G \times_{\Spec k} \Spec k'$;
	\item the functor $\FGps{}$ defined analogously;
	\item the functor $\DMod{}\colon k \mapsto \DMod{k}$ with $\DMod{}(i)(M) = M \otimes_{W(k)} W(k')$.
\end{enumerate}
Each of these categories has an ungraded analog, which we decorate with the letter $u$, e.g. $\Alg^u(k) = \{ \text{ungraded algebras over $k_0$}\}$. These are not Galois descent categories (over the category of graded fields $\Sep$), but the natural transformations
\begin{align*}
\Alg^u \to \Alg, & \quad A \mapsto A \otimes_{k_0} k\\
\AbSch{}^u \to \AbSch{}, & G \mapsto G \times_{\Spec k_0} \Spec k\\
\FGps{}^u \to \FGps{}, & G \mapsto G \times_{\Spec k_0} \Spec k\\
\DMod{}^u \to \DMod{}, & M \mapsto M \otimes_{W(k_0)} W(k),
\end{align*}
are equivalences on $k=k_0[u^{\pm 1}]$ with $|u|=2$.
\end{lemma}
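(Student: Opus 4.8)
The plan is to prove the two parts of Lemma~\ref{lemma:galoisdescent} separately: first the assertion that the four $2$-functors satisfy Galois descent, and then the assertion that the comparison maps from the ungraded theory are equivalences when $|u|=2$.

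For the descent part, I would handle $\Alg$ first, since it is the bedrock: for an affine scheme the statement is classical faithfully flat descent, noting that a finite separable extension $k \to k'$ of graded fields is finite \'etale (indeed $k'$ is a finite product of separable field extensions), so $\operatorname{Spec} k' \to \operatorname{Spec} k$ is an fpqc cover and $\Alg(k) \simeq \Alg(k')^{\Gal(k'\mid k)}$ by descent along a Galois cover. The grading is preserved throughout because $k \to k'$ is a morphism of graded rings and all the descent data are compatible with it; concretely one can reduce to the ungraded statement degree by degree, or simply observe that the descent machinery is enriched over graded modules. Given descent for $\Alg$, descent for $\AbSch{}^{\ev}$ and $\FGps{}$ follows formally: an affine group scheme (resp.\ formal group) is an algebra (resp.\ coalgebra/ind-algebra) equipped with extra structure maps, all of which are themselves subject to descent, so the category of such objects over $k$ is the homotopy fixed points of the category over $k'$. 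For $\DMod{}$, the key input is that $W(-)$ takes finite separable extensions of perfect graded fields to finite \'etale extensions of the corresponding (graded) Witt rings — this is the standard fact that $W(k') / W(k)$ is unramified when $k'/k$ is separable, together with $W(k)^{\Gal} = W(k)$ — so that $\DMod{k} \simeq \DMod{k'}^{\Gal(k'\mid k)}$ again by flat descent for modules, the Frobenius and Verschiebung maps descending because $\frob$ on $W(k')$ restricts to $\frob$ on $W(k)$.

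For the second part — that the ungraded-to-graded comparison functors are equivalences when $k = k_0[u^{\pm 1}]$ with $|u|=2$ — the point is that $k = k_0[u^{\pm 1}]$ is a "trivially graded" extension of $k_0$ in the sense that $k \cong k_0 \otimes_{k_0} k_0[u^{\pm 1}]$ and every graded $k$-module is, via multiplication by powers of $u$, determined by its degree-$0$ and degree-$1$ parts; when $|u|=2$ there is only the degree-$0$ part up to the shift $(1)$, and $u$ itself identifies $M(1)_* \cong M_*$ (cf.\ the proof of Lemma~\ref{lemma:shiftperiodicity} with $l=1$). I would make this precise by exhibiting an explicit inverse: an object over $k$ is sent to its degree-$0$ part (an ungraded object over $k_0$), and an ungraded object $X$ over $k_0$ is sent to $X \otimes_{k_0} k$; the unit and counit of this adjunction are isomorphisms because the grading on $k$ is "free of rank $1$ in each even degree" and multiplication by $u$ trivializes it. For $\DMod{}$ one uses in addition that $W(k_0[u^{\pm 1}]) \cong W(k_0)[u^{\pm 1}]$ (recorded earlier in the excerpt), so the same argument applies verbatim on the Witt-ring level, with $\frob(u) = u^p$ accounting for the twist in the $W(k)$-linear structure on $M(1)$.

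The main obstacle I anticipate is not any single deep step but rather bookkeeping: verifying that "forming the homotopy fixed points" genuinely recovers the usual notion of descent data (semilinear Galois action) in the graded setting, and in particular that the $2$-categorical fixed points are computed correctly for the possibly non-connected Galois group $\Gamma = \Gal(\bar k_0 \mid k_0) \ltimes \Z/\tfrac d2\Z$ with its semidirect-product structure. I would dispatch this by the remark already made in the text — that an object of $\mathcal C(k')^{\Gal(k'\mid k)}$ is simply an object of $\mathcal C(k')$ with a $\Gal(k'\mid k)$-semilinear action — so that the content reduces to classical faithfully flat (Galois) descent applied in each of the four enriched contexts, and the grading never genuinely interferes because all structure maps in sight are graded. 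Accordingly I would keep the proof short, treating $\Alg$ in detail and indicating that the other three cases are formal consequences, exactly as the excerpt's phrasing ("no new ideas are needed") suggests.
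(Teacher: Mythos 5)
Your overall strategy coincides with the paper's: both parts are proved by Galois descent plus the observation that for $k=k_0[u^{\pm 1}]$ with $|u|=2$ restriction to degree $0$ (with inverse $-\otimes_{k_0}k$) is an equivalence, and your treatment of the second part and of $\DMod{}$ via $W(k_0[u^{\pm 1}])\cong W(k_0)[u^{\pm 1}]$ matches the paper's. The one place where you go astray is the suggested reduction of the first part ``to the ungraded statement degree by degree'': base change along a graded extension $k=k_0[u^{\pm e}]\to k'=k'_0[u^{\pm 1}]$ does not act degree by degree (the degree-$j$ component of $A\otimes_k k'$ assembles the components $A_{j-id}$ for $0\le i<e$), so this reduction fails as stated. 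Your fallback --- that descent machinery is enriched over graded modules --- is essentially right, but it silently absorbs the single non-formal point of the lemma: one must identify the homotopy fixed points of graded $k'$-objects under the cyclotomic part $\mu_e(k'_0)$ of $\Gal(k'\mid k)$ (which acts by $u\mapsto\zeta u$, i.e.\ by regrading rather than by coefficient automorphisms) with graded objects over the coarser-graded subfield. The paper makes this precise by factoring the extension as $k<K<k'$ with $K=(k')^{\mu_e}$, handling $k<K$ by ungraded Galois descent on coefficients and $K<k'$ by the equivalence between $\Z/de\Z$-gradings and $\Z/d\Z$-gradings equipped with a $C_e$-action whose eigenspaces recover the finer grading. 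I would ask you to either adopt that factorization or spell out the eigenspace identification explicitly; with that inserted, your argument is complete and agrees with the paper's.
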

\begin{proof}
For the first part, let $\Gamma = \Gamma_0 \ltimes C_e$ with $\Gamma_0 = \Gal(k'_0 \mid k_0)$ and $C_e = \mu_e(k_0')\cong \Z/e\Z$. Note that any separable extension factors as separable extensions $k < K < k'$, where $K=(k')^{C_e}$. Explicitly, if $k=k_0[u^{\pm e}]$ and $k'=k'_0[u^{\pm 1}]$ then $K = k'_0[u^{\pm e}]$. For the extension $k < K$, we have that
\[
A \mapsto A \otimes_k K = A \otimes_{k_0} K_0 = A \otimes_{k_0} k'_0
\]
induces an equivalence by ungraded Galois descent. For the extension $K < k'$, note that a $\Z/de\Z$-grading is the same as a $\Z/d\Z$-grading with an action of the group $C_e$, where the degree-$di$ parts can be recovered as the eigenspaces of the $C_e$-action.

For the second part, it suffices to show that $\C^u(k_0) \simeq \C_{k}$. Clearly, restriction to the degree-$0$ part induces an equivalence $\C^{\ev}_{k} \to \C^u_{k_0}$ with inverse $- \otimes_{k_0} k$, and this equivalence is compatible with the $\Gamma$-action.
\end{proof}

\begin{lemma}\label{lemma:etalefgpsaregammamodules}
Let $\Mod_{\Gal(k)}$ be the category of discrete abelian groups with a continuous action of the absolute Galois group $\Gal(k)$.
Then $\Mod_{\Gal}\colon k \mapsto \Mod_{\Gal(k)}$ is a Galois descent category equivalent to the category of \'etale formal groups.

Under this equivalence, formal $p$-groups correspond to discrete abelian $p$-groups with a continuous $\Gamma$-action.
\end{lemma}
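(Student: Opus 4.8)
The plan is to construct, naturally in $k$, an equivalence $\FGpse{k}\simeq\Mod_{\Gal(k)}$, and to deduce from it both the Galois-descent property of $\Mod_{\Gal}$ (transported from that of $\FGpse{}$, itself inherited from Lemma~\ref{lemma:galoisdescent}(3)) and the $p$-primary statement.

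First I would record that $k\mapsto\FGpse{k}$ is a Galois descent category. By Lemma~\ref{lemma:galoisdescent}(3) the functor $\FGps{}$ is one, and for a finite separable $i\colon k\to k'$ a finite-dimensional $k$-algebra $R$ is étale iff $R\otimes_k k'$ is étale over $k'$ (both implications follow from $R\otimes_k k'\otimes_{k'}\bar k=R\otimes_k\bar k$, using $\bar{k'}=\bar k$, since étaleness over a field amounts to reducedness after base change to the separable closure). Hence étaleness of a formal group is both stable under and reflected by the base-change functors $\FGps{}(i)$, and the full subcategory of a Galois descent category carved out by such a condition is again a Galois descent category, since the condition is detected on underlying objects and is preserved by the $\Gal(k'\mid k)$-action (each Galois element being an isomorphism, hence a morphism in $\Sep$ to which stability applies).

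The core is the equivalence itself; I would set $\Phi_k(\hat G)=\hat G(\bar k):=\colim_{k'}\hat G(k')$, the filtered colimit over finite subextensions $k\subseteq k'\subseteq\bar k$, which is a discrete abelian group with a continuous $\Gal(k)$-action. By Grothendieck's Galois theory, $R\mapsto\Hom_k(R,\bar k)$ is an equivalence from finite étale $k$-algebras to finite continuous $\Gal(k)$-sets; transporting the abelian-cogroup structure across it, one obtains an equivalence between finite étale formal groups and finite abelian groups with continuous $\Gal(k)$-action, the quasi-inverse sending $N$ to the finite étale formal group with $\bar k$-points $N$. Every object of $\FGpse{k}$ is a filtered colimit of objects $\Spec R_\alpha$ with $R_\alpha$ étale, and every object of $\Mod_{\Gal(k)}$ is the filtered colimit of its finite $\Gal(k)$-stable subgroups, so passing to these colimits shows $\Phi_k$ is an equivalence (its essential image being precisely the torsion modules, which are the filtered colimits of finite ones). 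Since $\bar{k'}=\bar k$ for $k'/k$ finite separable, $\Phi_{k'}(\hat G\times_{\Spec k}\Spec k')=\operatorname{Res}^{\Gal(k)}_{\Gal(k')}\Phi_k(\hat G)$, so $\Phi$ is a $2$-natural equivalence of the $\Sep$-indexed categories $\FGpse{}$ and $\Mod_{\Gal}$; therefore $\Mod_{\Gal}$, being $2$-equivalent to a Galois descent category, is one.

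For the last assertion, an étale formal group $\hat G$ is the locally constant functor attached to the $\Gal(k)$-group $\hat G(\bar k)$, so its value $\hat G(A)$ on any test algebra is assembled from subgroups of $\hat G(\bar k)$; hence $\hat G$ takes values in abelian $p$-groups iff $\hat G(\bar k)=\Phi_k(\hat G)$ is an abelian $p$-group, and $\Phi_k$ restricts to the claimed equivalence between $\FGps{k}^p$ and discrete abelian $p$-groups with continuous $\Gamma$-action. The step I expect to cost the most effort is the middle one: checking that Grothendieck's correspondence respects comultiplications and survives the passage to the filtered colimits presenting general objects — concretely, that an étale formal group admits a presentation $\colim_\alpha\Spec R_\alpha$ with every $R_\alpha$ étale and every transition map a group homomorphism, matching the presentation of a (necessarily torsion) $\Gal(k)$-module by its finite stable subgroups. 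Everything else is routine bookkeeping.
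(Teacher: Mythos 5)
Your overall strategy --- Grothendieck's Galois correspondence at the finite level, extended along filtered colimits, with the Galois-descent property of $\Mod_{\Gal}$ transported across the resulting equivalence --- is the standard one and is close to what the paper does (the paper proves the descent property of $\Mod_{\Gal}$ directly, observing that a $\Gal(k')$-action together with a compatible $\Gal(k'\mid k)$-action is the same thing as a $\Gal(k)$-action, which is simpler than first establishing that $\FGpse{}$ is a descent subcategory of $\FGps{}$; but that difference is cosmetic). There is, however, a genuine error in your colimit step. You match ind-(finite étale group schemes) against filtered colimits of finite $\Gal(k)$-stable subgroups and conclude that the essential image of $\Phi_k$ is ``precisely the torsion modules.'' That conclusion contradicts the statement you are proving, which asserts an equivalence with \emph{all} discrete $\Gal(k)$-modules; and the non-torsion case is genuinely needed later, since in the proof of the formal case of Thm.~\ref{thm:freeschemefactorization} the étale part of the free formal group has $\bar k$-points $\Z\langle\Hom_{\Alg_k}(R,\bar k)\rangle$, a free abelian group.

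The source of the error is that neither of the two presentations you invoke is the right one. An étale formal group is a filtered colimit of finite étale \emph{schemes} $\Spec R_\alpha$ with the group structure living only on the colimit; the $\Spec R_\alpha$ need not be subgroup schemes (the constant formal group $\underline{\Z}=\colim_n\Spec k^{\{-n,\dots,n\}}$ is étale but is not an ind-object of finite group schemes). Correspondingly, a discrete $\Gal(k)$-module is the filtered union of its finite $\Gal(k)$-stable \emph{subsets}, not of finite subgroups ($\Z$ with trivial action has no nonzero finite subgroup, yet is discrete). So the correspondence to set up is between the finite étale pieces and the finite stable subsets, carrying the group structures along on the colimits; alternatively, and more efficiently, one writes down the inverse functor explicitly as $M\mapsto\Spf{\operatorname{map}^{\Gamma}(M,\bar k)}$, as the paper does, and checks that the two composites are naturally the identity. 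Your final paragraph on $p$-groups is fine once the equivalence is correctly established in this generality.
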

\begin{proof}
The assignment $\Mod_{\Gal}$ becomes a functor by defining $i_*\colon \Mod_{\Gal(k)} \to \Mod_{\Gal(k')}$ to be the identity map, restricting the action of $\Gal(k)$ to that of $\Gal(k')$. This is clearly a Galois descent category since abelian groups with a $\Gal(k')$ and a $\Gal(k' \mid k)$-action are the same as abelian groups with a $\Gal(k)$-action.

As in the ungraded context, the equivalence is given by the functors (cf. \cite[\textsection I.7]{fontaine:groupes-divisibles})
\[
\begin{tikzcd}[column sep=5cm]
\{\text{\'etale formal groups}\} \arrow[r,bend left=5,"{G \mapsto \colim_{k \subseteq k' \subseteq \bar k} G(k')}"] & \Mod_{\Gamma}. \arrow[l,bend left=5,"{\Spf{\map^\Gamma(M,\bar k)} \mapsfrom M}"]
\end{tikzcd}
\]
\end{proof}

\begin{corollary}\label{cor:formalgpdecomp}
We have equivalences of Galois descent categories
\begin{align*}
\FGps{}^{\ev} &\simeq \Mod_{\Gal} \times (\FGpsc{})^{\ev}\\
\shortintertext{and}
\Hopf{}^{\ev} &\simeq \Mod_{\Gal} \times (\Hopfu{})^{\ev}
\end{align*}
\qed
\end{corollary}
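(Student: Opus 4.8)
The plan is to assemble the two equivalences from material already in place: the natural connected--\'etale splitting of formal groups of Prop.~\ref{prop:formalgroupsplitting} (and, dually, the multiplicative--unipotent splitting of bicommutative Hopf algebras), the identification of \'etale formal groups with $\Mod_{\Gal}$ of Lemma~\ref{lemma:etalefgpsaregammamodules}, and the fact from Lemma~\ref{lemma:galoisdescent} that $\FGps{}^{\ev}$ and (via the anti-equivalence with $\AbSch{}^{\ev}$) $\Hopf{}^{\ev}$ are Galois descent categories. I will carry out the formal-group equivalence in detail; the Hopf-algebra one then follows either by running the same argument on the dual half of Prop.~\ref{prop:formalgroupsplitting}, or by Cartier duality.

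First I would note that over an evenly graded field every finite separable extension is of the form $k_0'[v^{\pm 1}]$ with $|v|=2$, hence evenly graded, so every finite \'etale $k$-algebra, and thus every \'etale formal group over $k$, is evenly graded; therefore $\{\text{\'etale formal groups}\}^{\ev}$ coincides with $\{\text{\'etale formal groups}\}$. Prop.~\ref{prop:formalgroupsplitting} then makes $\hat G \mapsto (\hat G_0,\hat G_c)$ an equivalence
\[
\FGps{k}^{\ev} \xrightarrow{\ \simeq\ } \{\text{\'etale formal groups over }k\}\times(\FGpsc{k})^{\ev},
\]
with quasi-inverse $(E,C)\mapsto E\times C$: the \'etale part of $E\times C$ is $E$, since $C$ has trivial \'etale part, and its connected part is $C$. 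Plugging in Lemma~\ref{lemma:etalefgpsaregammamodules} rewrites the first factor as $\Mod_{\Gal(k)}$, which yields the desired equivalence for each individual $k$.

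The step I expect to demand the most care is upgrading this to an equivalence of \emph{Galois descent} categories, i.e.\ of $2$-functors on $\Sep$. Here I would check that the connected--\'etale splitting commutes with the base-change functors $-\times_{\Spec k}\Spec k'$ of Lemma~\ref{lemma:galoisdescent}: passing to the reduced quotient of a finite algebra commutes with base change along the separable extension $k\to k'$, so $(\hat G\times_{\Spec k}\Spec k')_0 = \hat G_0\times_{\Spec k}\Spec k'$ and similarly for the connected part. Thus $\hat G\mapsto(\hat G_0,\hat G_c)$ underlies a $2$-natural transformation that is objectwise an equivalence, and the identification of \'etale formal groups with $\Mod_{\Gal}$ is $2$-natural by Lemma~\ref{lemma:etalefgpsaregammamodules}. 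Since homotopy fixed points commute with finite products of $2$-functors, $\Mod_{\Gal}\times(\FGpsc{})^{\ev}$ is automatically again a Galois descent category; in particular $(\FGpsc{})^{\ev}$ is one.

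For the Hopf-algebra equivalence I would run the identical scheme starting from the dual half of Prop.~\ref{prop:formalgroupsplitting}: $H\mapsto(H_m,H_u)$ is an equivalence $\Hopf{k}^{\ev}\simeq\{\text{multiplicative-type Hopf algebras}\}\times(\Hopfu{k})^{\ev}$, which is $2$-natural in $k$ for the same base-change reason (directly, or by Cartier duality from the formal case). What remains is to identify the multiplicative part with $\Mod_{\Gal}$. This is the Cartier dual of Lemma~\ref{lemma:etalefgpsaregammamodules}, but it is cleanest to see it directly: since every multiplicative-type Hopf algebra becomes a group algebra after base change to $\bar k$, the functor $H\mapsto\operatorname{Gr}(H\otimes_k\bar k)$ -- grouplike elements with their continuous $\Gal(k)$-action, from which $H$ is recovered by Galois descent -- is a \emph{covariant} equivalence onto $\Mod_{\Gal}$, natural in $k$ (sending $\mathbb G_m$ to $\Z$ and $\mu_n$ to $\Z/n$). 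The one genuine subtlety is the bookkeeping of variance, so that this factor emerges as $\Mod_{\Gal}$ and not its opposite; the grouplike-elements description sidesteps it, whereas routing through Cartier duality and character groups introduces two cancelling dualizations that one must track.
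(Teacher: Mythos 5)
Your proposal is correct and follows the same route the paper intends: the corollary is stated with no written proof precisely because it is the combination of the splitting of Prop.~\ref{prop:formalgroupsplitting} with the identification of \'etale formal groups in Lemma~\ref{lemma:etalefgpsaregammamodules}, which is exactly what you carry out. The details you supply --- that \'etale objects over an evenly graded field are automatically even, that the splitting commutes with separable base change, and the covariant grouplike-elements description of the multiplicative factor --- are the right ones to make the ``$\qed$'' honest.
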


\begin{thm} \label{thm:generaldieudonne}
There is an exact natural equivalence $\DieuFor$ between the following abelian Galois descent categories:
\begin{enumerate}
\item the category $(\FGps{}^p)^{\ev}$ of even formal $p$-groups; and
\item the category $(\DDModF{})^{\ev}$ of even $F$-profinite Dieudonné modules.
\end{enumerate}
This natural equivalence is represented by the formal group $CW_k$. 
\end{thm}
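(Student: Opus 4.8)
The plan is to establish the Dieudonné equivalence $\DieuFor\colon (\FGps{}^p)^{\ev} \to (\DDModF{})^{\ev}$ by combining Galois descent with the ungraded theory of Fontaine, and then to identify the co-representing object as $CW_k$. First I would reduce to the case of a perfect graded field $k = k_0[u^{\pm 1}]$ with $|u| = 2$ (for $p > 2$; $|u|=1$ for $p=2$). Indeed, by Lemma~\ref{lemma:galoisdescent}, the functors $\FGps{}$ and $\DMod{}$ (and hence their $F$-profinite, $p$-group, even variants) are Galois descent categories, and for the extension $k < \bar k$ with absolute Galois group $\Gamma = \Gal(k)$, it suffices to produce a $\Gamma$-equivariant natural equivalence over $\bar k$. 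Moreover, the same lemma shows that $\FGps{}^u \to \FGps{}$ and $\DMod{}^u \to \DMod{}$ are equivalences on such $k$, so the graded statement over $k$ follows from the \emph{ungraded} statement over $k_0$ together with the observation that the even grading is recovered as the eigenspace decomposition under the $C_e$-action (equivalently, a $\Z$-grading on a $k_0[u^{\pm 1}]$-module with $|u|=2$ is freely reconstructed from the degree-$0$ part). In other words, the existence and exactness of $\DieuFor$ is precisely Fontaine's classical Dieudonné equivalence for commutative affine formal $p$-groups over a perfect field of characteristic $p$, cf.~\cite[\textsection II,III]{fontaine:groupes-divisibles}, transported through the equivalences of Lemma~\ref{lemma:galoisdescent}.

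Having obtained the equivalence, the substantive point is the last sentence: that $\DieuFor$ is \emph{co-represented} by the formal group $CW_k$, i.e. that there is a natural isomorphism $\DieuFor(\hat G) \cong \Hom_{\FGps{k}}(CW_k, \hat G)$ of $W(k)$-modules, functorially in $\hat G \in (\FGps{}^p)^{\ev}$, with the $F$ and $V$ operations on the right coming from the Frobenius and Verschiebung endomorphisms of $CW_k$ defined in the previous section. To do this I would recall that in Fontaine's ungraded treatment the Dieudonné module of a formal $p$-group is exactly $\Hom(CW_{k_0}, -)$, where $CW_{k_0}$ is the ungraded co-Witt formal group, and that the $W(k_0)$-module structure and the $F,V$-actions are induced by the corresponding structure maps of $CW_{k_0}$ (the Witt-vector scalar multiplication, and the Frobenius/Verschiebung on co-Witt vectors). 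The graded $CW_k$ constructed in Prop.~\ref{prop:gradedCW} and the subsequent definitions is, under the identification $\FGps{}^{\ev}_k \simeq \FGps{}^u_{k_0}$, precisely the image of $CW_{k_0}$ — this is visible from the definition $CW(A)_j = (\prod_{i\le 0} A(i))_j \cap CW(A^u)$, which is the grading-compatible refinement of Fontaine's co-Witt construction, and from the fact that its $F$, $V$, and $W(k)$-module structure are defined by the same polynomials. Hence $\Hom_{\FGps{k}}(CW_k, \hat G) \cong \Hom_{\FGps{k_0}}(CW_{k_0}, \hat G_0) = \DieuFor(\hat G_0)$ as $W(k_0)$-modules with $F,V$, and re-grading gives the graded Dieudonné module back; the naturality and compatibility with the $\Gamma$-action is formal since $CW_k$ is defined functorially in $k$ and $CW_k \times_{\Spec k}\Spec\bar k \cong CW_{\bar k}$.

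The main obstacle I anticipate is \emph{not} the existence of the equivalence — that is genuinely a black-box application of Fontaine combined with Lemma~\ref{lemma:galoisdescent} — but rather verifying cleanly that $CW_k$ as defined here is the \emph{correct} co-representing object, i.e.\ that the ad hoc, piecewise definition of $CW(A)$ (split over degrees $A_{(j)}$, with the $d \mid j$ case handled by base change from $CW(A_0)$ and the $d \nmid j$ case by the regrading $\phi$ and Cor.~\ref{cor:retract}) assembles into a \emph{formal group}, that the $W(k)$-module structure on $\Hom(CW_k,-)$ it induces is the Frobenius-twisted one appearing in the definition of $\DMod{k}$, and that $F$ and $V$ on $CW_k$ induce exactly the maps $F\colon M \to M(1)$, $V\colon M(1)\to M$ with $FV=VF=p$. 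Concretely one checks: (i) $CW_k$ is ind-representable by the profinite graded ring $\Reg{CW_k}$ exhibited earlier, so it lies in $\FGps{k}$, and takes values in abelian $p$-groups because the same is true ungradedly; (ii) scalar multiplication by $\alpha \in W(k)$ on $CW_k$ is, on the degree-$j$ part, multiplication by $\frob^{\,?}(\alpha)$ with the twist matching the convention $M(j)$ in Lemma~\ref{lemma:shiftperiodicity} — this is the one bookkeeping computation that must be done carefully; (iii) the Frobenius and Verschiebung on $CW_k$ (Prop.~\ref{prop:gradedCW}) satisfy $FV=VF=p$, inherited from the ungraded identities. Once (i)--(iii) are in place, exactness of $\DieuFor$ follows from exactness in the ungraded case plus the exactness of the degree-decomposition and regrading functors, and the Galois-descent statement is automatic from Lemma~\ref{lemma:galoisdescent}, completing the proof.
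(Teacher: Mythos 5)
Your overall strategy --- reduce via Galois descent (Lemma~\ref{lemma:galoisdescent}) to the ungraded Dieudonn\'e equivalence of Fontaine, and verify separately that the representing object is the graded $CW_k$ --- is essentially the paper's strategy, but two points in your write-up are genuinely off. First, you have the variance of the representing object backwards: the Dieudonn\'e module here is
\[
\DieuFor_k(G) \;=\; \Hom_{\FGps{k}}(G,\,CW_k) \;=\; \{\,a \in CW_k(\Reg{G}) \mid \psi(a) = a \otimes 1 + 1 \otimes a\,\},
\]
i.e.\ the primitive elements of $CW_k(\Reg{G})$, and Fontaine's ungraded functor is likewise $\Hom(-,CW_{k_0})$, not $\Hom(CW_{k_0},-)$ as you recall it. Your covariant $\Hom_{\FGps{k}}(CW_k,\hat G)$ is a different functor: it is not profinite in general (whereas $\Hom(G,CW_k) = \lim_\alpha \Hom(G_\alpha,CW_k)$ is, since $G$ is ind-finite), so it does not land in $\DDModF{k}$, and it is incompatible with the later computation $\DieuFor(\Fr(A)) = CW(A)$, which needs exactly $\Hom(\Fr(A),CW_k) = CW_k(A)$. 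This is not a harmless convention choice; the whole second half of your argument identifies the wrong functor with Fontaine's.

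Second, your reduction ``to the case $k = k_0[u^{\pm 1}]$ with $|u|=2$'' silently discards the case of a perfect graded field concentrated in degree $0$, i.e.\ $k = k_0$ with genuinely $\Z$-graded formal groups over it. There no separable extension makes the field periodic, and the graded category is \emph{not} equivalent to the ungraded one (the grading is extra structure), so the eigenspace/regrading trick is unavailable. The paper treats this as a separate case: one observes that $W(k) = W(k_0)$ is concentrated in degree $0$ and that the classical equivalence applies degreewise and preserves the grading. You need to add that case, and it is the only place where something beyond base change is required. The remaining ingredients you list --- that $CW_k$ is ind-representable hence a formal group, that $FV = VF = p$ persists, and that base change follows from $CW(A\otimes_k k')^\Gamma \cong CW(A)$ (because $\Gamma$ acts componentwise) together with Galois descent for $W(k)$-modules --- match the paper's proof, although the base-change step deserves the explicit invariants computation rather than being labelled ``formal.''
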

\begin{proof}
The statement unwinds to showing that the functor $\DieuFor$ represented by $CW_k$ from $\FGps{k}^p$ to $\DDModF{k}$ is an equivalence, and
that $\DieuFor$ is compatible with base change in the sense that for any finite, separable extension $k'$ of $k$,
\begin{equation} \label{eq:basechange}
\DieuFor_{k'} (G \otimes_k k') \cong \DieuFor_{k}(G) \otimes_{W(k)} W(k').
\end{equation}

This theorem is essentially well-known. An ungraded version appeared in \cite[\textsection III Théorème 1]{fontaine:groupes-divisibles}, cf. also \cite[Theorem~4.2]{bauer:p-polar}. 

The base change property \eqref{eq:basechange} was proved in the ungraded case in \cite[Prop.~III.2.2]{fontaine:groupes-divisibles}. For any Galois extension $k < k'$ with Galois group $\Gamma$ and finite $k$-algebra $A$,
\[
CW(A \otimes_k k')^\Gamma \cong CW(A)
\]
by Galois descent for finite algebras and because $\Gamma$ acts componentwise on $CW(A \otimes_k k') \subseteq \prod_{i \leq 0} A \otimes_k k'$.  If $G$ is a formal group over $k$ then
\[
\DieuFor_k(G) = \FGps{k}(G,CW_{k}) = \{ a \in CW_{k}(\Reg{G}) \mid \psi(a) = a \otimes 1 + 1 \otimes a \in CW_{k}(\Reg{G} \otimes_k \Reg{G})\}.
\]
and hence
\[
\DieuFor_{k'}(G \times_k k')^\Gamma = \{ a \in CW_{k'}(\Reg{G} \otimes_k k')^\Gamma \mid \psi(a) = a \otimes 1 + 1 \otimes a\}^\Gamma \cong \DieuFor_k(G).
\]
Equation \eqref{eq:basechange} follows from Galois descent for $W(k)$-modules (Lemma~\ref{lemma:galoisdescent}).

This shows that $\DieuFor$ is a natural transformation of Galois descent categories. To show it is an equivalence, we consider two cases: if $k=k_0$ is concentrated in degree $0$ then so is $W(k)$, and the classical theorem applies and preserves the grading. 

If on the other hand $k=k_0[u,u^{-1}]$ with $|u|=d$, we cannot use the ungraded result directly. By the first part of Lemma~\ref{lemma:galoisdescent}, it suffices to show that $\DieuFor$ is an equivalence after a finite Galois extension, so we may assume $d=2$, and by the second part of the same Lemma, the claim is once again reduced to the ungraded case.
\end{proof}

\begin{proof}[Proof of the formal case of Thm.~\ref{thm:dieudonneoffree}]
For an evenly graded, finite-dimensional $k$-algebra $A$ (arbitrarily graded if $p=2$), we have that
\[
\DieuFor(\Fr(A)) = \Hom_{\FGps{k}}(\Fr(A),CW_k) = \Hom_{\FSch{k}}(\Spec A, CW_k) = CW(A). 
\]
\end{proof}

\begin{proof}[Proof of the formal case of Thm.~\ref{thm:freeschemefactorization}]
By Prop.~\ref{prop:oddfree}, it suffices to consider the case where $p=2$ or $\Fr_{\ev}$, the even part of the free formal group functor. 

The splitting of Prop.~\ref{prop:formalgroupsplitting} gives a splitting of the functor $\Fr_{\ev}$ as
\[
\Fr_{\ev} = \Fr_{\ev,0} \times \Fr_{\ev,c}.
\]
The Dieudonné functor restricts to an equivalence between connected formal groups and connected Dieudonné modules, so Thm.~\ref{thm:dieudonneoffree} gives an  extension $\hat\Fr_{\ev,c}$ by
\[
\tilde\Fr_{\ev,c}(A) = \left((\DieuFor)^{-1}CW(A)\right)_c.
\]
The same argument would work for étale, $p$-adic formal groups, but Theorem~\ref{thm:freeschemefactorization} does not require $p$-adicness. Instead, we observe as in \cite[Lemmas 4.3, 4.5]{bauer:p-polar} that $\colim_{k \subseteq k' \subseteq \bar k} \Fr_{\ev,0}(A)(k') = \Z\langle \Hom_{\Alg_k}(R,\bar k)\rangle$ is well-defined for $p$-polar algebras. By Lemma~\ref{lemma:etalefgpsaregammamodules}, this gives $\tilde \Fr_{\ev,0}$.
\end{proof}

We will now turn to the affine case. The analog of Thm.~\ref{thm:generaldieudonne} is the following:
\begin{thm}\label{thm:affinedieudonne}
Let $k$ be a perfect graded field of characteristic $p$. Then there is an equivalence $D$ between the following Galois descent categories:
\begin{enumerate}
	\item The category $(\AbSch{}^p)^{\ev}$ of $p$-adic, even group schemes; and
	\item the category $(\DMod{}^p)^{\ev}$ of $p$-adic, even Dieudonné modules.
\end{enumerate}
\end{thm}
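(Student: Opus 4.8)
The plan is to mirror the proof strategy used for the formal case in Theorem~\ref{thm:generaldieudonne}, reducing everything to the classical ungraded affine Dieudonné equivalence via Galois descent. Concretely, the classical result (Fontaine~\cite[\textsection III]{fontaine:groupes-divisibles}, or the Dieudonné correspondence for affine commutative group schemes over a perfect field) gives the equivalence when $k = k_0$ is concentrated in degree~$0$; in that case $W(k)$ is also concentrated in degree~$0$, so the classical equivalence automatically respects the (trivial) grading and lands in $(\DMod{k}^p)^{\ev}$. For the periodic case $k = k_0[u^{\pm 1}]$ with $|u| = d$, one first uses the first part of Lemma~\ref{lemma:galoisdescent} to reduce to the situation $d = 2$ after a finite Galois extension, and then the second part of the same lemma to transport the problem to the ungraded category $\C^u(k_0)$. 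The functor $D$ itself should be defined as the functor represented by $W_k$, i.e.\ $D(G) = \Hom_{\AbSch{k}^p}(G, W_k)$ with its $W(k)$-module structure coming from the graded ring object structure on $\Lambda_p$ together with Frobenius and Verschiebung on $W$, and dually via Cartier duality from $\DieuFor$ on $(\FGps{}^p)^{\ev}$.

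First I would verify that $D$, so defined, is a morphism of Galois descent categories, i.e.\ that it satisfies the base-change formula $D_{k'}(G \otimes_k k') \cong D_k(G) \otimes_{W(k)} W(k')$ for finite separable $k'/k$. This is the exact analog of \eqref{eq:basechange}, and the argument is the same: for a Galois extension $k < k'$ with group $\Gamma$, one has $W(A \otimes_k k')^{\Gamma} \cong W(A \otimes_k k')^{\Gamma}$ -- more precisely, $\Gamma$ acts componentwise on $W(A \otimes_k k') \subseteq \prod_i (A \otimes_k k')(i)$, so Galois descent for algebras gives that the $\Gamma$-invariants of primitive elements of $W_{k'}(\Reg{G} \otimes_k k')$ recover $D_k(G)$, and then Galois descent for $W(k)$-modules (Lemma~\ref{lemma:galoisdescent}(4)) upgrades this to the stated isomorphism. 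Alternatively, since $\AbSch{}^p$ and $\FGps{}^p$ are anti-equivalent under Cartier duality and $\DMod{}^p$ and $\DDModF{}$ are likewise related (an étale/connected and $p$-adic/unipotent bookkeeping as in Prop.~\ref{prop:formalgroupsplitting} and Cor.~\ref{cor:formalgpdecomp}), one can simply dualize Theorem~\ref{thm:generaldieudonne}; I would present whichever is shorter, probably the Cartier-duality route since the hard analytic content is already packaged in Theorem~\ref{thm:generaldieudonne}.

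Having established that $D$ is a natural transformation of Galois descent categories, it remains to check it is an equivalence, and for that it suffices (by the Galois descent property, since an effaceable-on-each-fiber natural transformation of descent categories that is an equivalence after one faithfully flat cover is an equivalence) to check it fiberwise, after a finite Galois extension. So we reduce to $k = k_0$ or $k = k_0[u^{\pm 1}]$ with $|u| = 2$; the first is the classical theorem, and the second reduces to the first by Lemma~\ref{lemma:galoisdescent}(2) and (4) -- the equivalence $(\AbSch{}^p)^{\ev}_k \simeq (\AbSch{}^p)^u_{k_0}$ restricting to the degree-$0$ part, matched with $(\DMod{}^p)^{\ev}_k \simeq (\DMod{}^p)^u_{k_0}$ via $W$, both compatible with the $\Gamma$-action. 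Exactness of $D$ follows from exactness of the representing functor together with exactness of the classical equivalence.

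The main obstacle I anticipate is not any single deep step but the bookkeeping: making sure the $p$-adicness condition on the group-scheme side matches precisely the $p$-adicness condition on Dieudonné modules (the finite-length-$V$-closed-submodule condition in the definition), and that the grading shift functor $(1)$, which is an equivalence in the periodic case (Lemma~\ref{lemma:shiftperiodicity}) but only has a one-sided inverse when $k = k_0$, interacts correctly with the Frobenius and Verschiebung structure maps $F\colon M \to M(1)$, $V\colon M(1)\to M$ under base change. In the periodic case one must also confirm that the reduction to $d = 2$ via a Galois extension is harmless for the $p$-adicness condition, i.e.\ that $D$ restricts correctly to the $p$-adic subcategories on both sides -- this is where I would be most careful, but it is entirely parallel to the handling of the $F$-profinite condition in the proof of Theorem~\ref{thm:generaldieudonne}, so no genuinely new idea is required.
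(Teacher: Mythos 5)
Your overall strategy --- leverage Theorem~\ref{thm:generaldieudonne} through Cartier duality and check compatibility with Galois descent, falling back on the ungraded classical theorem via Lemma~\ref{lemma:galoisdescent} --- is the same as the paper's. But there is a genuine gap in how you pin down the functor $D$. Your primary definition, $D(G)=\Hom_{\AbSch{k}^p}(G,W_k)$, cannot be the right one: $W_k$ (and each $W_n$) is unipotent, so this $\Hom$ vanishes on the multiplicative part of a $p$-adic affine group (e.g.\ on $\mu_{p^\infty}$), and even on the unipotent part the classical Dieudonn\'e module is $\colim_n\Hom(G,W_n)$ along Verschiebung, not $\Hom(G,\lim_n W_n)$. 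So the representing object $W_k$ does not produce an equivalence onto $(\DMod{}^p)^{\ev}$.

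Your fallback, ``simply dualize Theorem~\ref{thm:generaldieudonne} via Cartier duality,'' is the paper's route but is left incomplete at exactly the point where something must be supplied. Cartier duality $d$ is an anti-equivalence $\AbSch{k}^{p,\ev}\to\FGps{k}^{p,\ev}$, and $\DieuFor$ is covariant, so $\DieuFor\circ d$ is a \emph{contravariant} functor landing in the category $(\DDModF{k})^{\ev}$ of $F$-profinite Dieudonn\'e modules --- the wrong variance and the wrong target. The missing ingredient is a second anti-equivalence, Matlis duality $I(M)=\Hom^c_{W(k)}(M,CW(k))$, which carries $F$-profinite modules to $p$-adic ones and restores covariance; the paper defines $D=I\circ\DieuFor\circ d$. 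One must then also verify that $d$ and $I$ are themselves compatible with finite separable base change; for $I$ this uses the adjunction $\Hom^c_{W(k')}(M\otimes_{W(k)}W(k'),CW(k'))\cong\Hom^c_{W(k)}(M,CW(k)\otimes_{W(k)}W(k'))$ together with the fact that $W(k')$ has finite length as a $W(k)$-module to pull the tensor outside. Your proposal never identifies this duality step or the associated descent check, and without it neither the equivalence onto $(\DMod{}^p)^{\ev}$ nor its Galois-descent compatibility is established.
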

\begin{proof}
The equivalence is given by the composition
\[
\AbSch{k}^{p,ev} \xrightarrow{d} \FGps{k}^p \xrightarrow{\DieuFor} (\DDModF{k})^{\ev} \xrightarrow{I} \DMod{k}^{p,\ev},
\]
where $d$ denotes Cartier duality and $I$ denotes Matlis (Poincaré) duality
\[
M \mapsto \Hom^c_{W(k)}(M,CW(k)),
\]
the group of continuous homomorphisms into $CW(k)$. Since both $d$ and $I$ are anti-equivalences and because of Thm.~\ref{thm:generaldieudonne}, $D$ is an equivalence. To see this is an equivalence of Galois descent categories, we need to see that both $d$ and $I$ are. Cartier duality is given by taking linear duals on the level of representing objects, and thus
\[
\Hom_{k'}(H \otimes_k k',k') = \Hom_{k}(H,k') \cong \Hom_k(H,k) \otimes_k k'
\]
for finite extension $k \to k'$. For Matlis duality, we see that
\begin{multline*}
\Hom^c_{W(k')}(M \otimes_{W(k} W(k'),CW(k')) = \Hom^c_{W(k)}(M,CW(k) \otimes_{W(k)} W(k'))\\ \cong \Hom^c_{W(k)}(M,CW(k)) \otimes_{W(k)} W(k')
\end{multline*}
since $W(k')$ has finite length as a $W(k)$-module.
\end{proof}

We next study how the free $p$-adic affine abelian group functors behave with respect to field extensions and Galois descent.

\begin{lemma} \label{lemma:freeandbasechange}
The natural transformation $\Fr\colon \Alg \to \AbSch{}$ is a natural transformation of Galois descent categories.
\end{lemma}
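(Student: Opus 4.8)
The plan is to exhibit, for every finite separable extension $i\colon k \to k'$ in $\Sep$, a natural isomorphism
\[
\Fr_{k'}(A \otimes_k k') \;\cong\; \Fr_k(A) \times_k k', \qquad A \in \Alg_k,
\]
and then to verify that these isomorphisms are compatible with composition of extensions -- so that, in particular, they are equivariant under $\Gal(k'\mid k)$ when $k\to k'$ is Galois. This is precisely the data making the functors $\Fr_k\colon \Alg_k^{\op} \to \AbSch{k}^p$ into a pseudonatural transformation of the Galois descent $2$-functors $\Alg$ and $\AbSch{}$. The argument is entirely formal: the isomorphism will be produced by Yoneda from a chain of adjunctions, and no Hopf algebra need ever be written down.

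There is no real obstacle; the only external input is Weil restriction. Since $k\to k'$ is a finite extension of graded fields, $k'$ is free of finite rank over $k$ as a graded module, so Weil restriction $\operatorname{Res}_{k'/k}$ along $\Spec k'\to\Spec k$ is defined on all affine $k'$-schemes, carries affine abelian group schemes to affine abelian group schemes, and is right adjoint to base change:
\[
\Hom_{\Sch{k'}}(X \times_k k',\, Y) \;\cong\; \Hom_{\Sch{k}}(X,\, \operatorname{Res}_{k'/k} Y).
\]
Because $(\operatorname{Res}_{k'/k} H)(A) = H(A \otimes_k k')$, the functor $\operatorname{Res}_{k'/k}$ sends functors valued in abelian pro-$p$-groups to functors valued in abelian pro-$p$-groups (and even group schemes to even ones, so that it is compatible with the splitting in use); hence it restricts to a right adjoint $\operatorname{Res}_{k'/k}\colon \AbSch{k'}^p \to \AbSch{k}^p$ of the base-change functor $G\mapsto G\times_k k'$.

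Now fix $H \in \AbSch{k'}^p$. Chaining, in turn, the $\operatorname{Res}_{k'/k}$-adjunction of the previous paragraph, the defining universal property of $\Fr_k$ as left adjoint of the forgetful functor $G\mapsto \Reg G$ (so that $\Hom_{\AbSch{k}^p}(\Fr_k A, G) \cong \Hom_{\Sch{k}}(\Spec A, G) = G(A)$), the identity $(\operatorname{Res}_{k'/k} H)(A) = H(A\otimes_k k')$, and the analogous property of $\Fr_{k'}$, we obtain
\begin{align*}
\Hom_{\AbSch{k'}^p}\bigl(\Fr_k(A) \times_k k',\, H\bigr)
&\cong \Hom_{\AbSch{k}^p}\bigl(\Fr_k(A),\, \operatorname{Res}_{k'/k} H\bigr) \\
&\cong H(A \otimes_k k') \;\cong\; \Hom_{\AbSch{k'}^p}\bigl(\Fr_{k'}(A \otimes_k k'),\, H\bigr),
\end{align*}
naturally in $H$. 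The Yoneda lemma in $\AbSch{k'}^p$ then gives the desired isomorphism $\Fr_k(A)\times_k k' \cong \Fr_{k'}(A\otimes_k k')$, which is visibly natural in $A$ as well.

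Finally, the coherence conditions. For composable extensions $k \to k' \to k''$ one has $\operatorname{Res}_{k''/k} \cong \operatorname{Res}_{k'/k}\circ \operatorname{Res}_{k''/k'}$ (and base change composes likewise), and every isomorphism used above is the canonical one attached to these adjunctions; so the required hexagon for the composite reduces to the standard coherence of Weil restriction, and on identity extensions everything is an identity. Equivariance under $\Gal(k'\mid k)$ for a Galois extension $k\to k'$ is then simply the instance in which $i$ runs over the $k$-automorphisms of $k'$. The whole argument runs parallel to the ungraded statement in \cite{bauer:p-polar}; the only place where the grading intervenes beyond bookkeeping -- and hence the step I would check most carefully -- is the construction of $\operatorname{Res}_{k'/k}$ in the category of graded $k$-schemes together with the fact that it preserves the even/odd decomposition, both of which are routine since $k'/k$ is a finite free extension of graded rings.
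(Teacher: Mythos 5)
Your proof is correct, but it takes a genuinely different route from the paper's. The paper first splits $\Fr = \Fr_u \times \Fr_m$ into its unipotent part and its part of multiplicative type (via Prop.~\ref{prop:formalgroupsplitting} and Cartier duality) and then checks base-change compatibility on explicit representing Hopf algebras: $\Cof^u(A)=\bigoplus_n (A^{\otimes_k n})^{\Sigma_n}$ for the unipotent part, where flatness of $k'$ over $k$ makes $\Sigma_n$-invariants commute with $-\otimes_k k'$, and $\bar k[(A\otimes_k\bar k)^\times]^{\Gal(k)}$ for the multiplicative part, handled by a graded Galois descent argument. Your argument instead never opens up $\Fr(A)$ at all: you transport the statement across the base-change/Weil-restriction adjunction, use $(\operatorname{Res}_{k'/k}H)(A)=H(A\otimes_k k')$ together with the defining adjunction of $\Fr$, and conclude by Yoneda. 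This is cleaner and more robust (it needs neither the unipotent/multiplicative splitting nor, in fact, separability of $k'/k$, only that $k'$ is finite free over $k$ so that $\operatorname{Res}_{k'/k}$ is representable on affine schemes and preserves the pro-$p$ and evenness conditions, all of which you correctly identify as the points to check and which are indeed routine in the graded setting). What the paper's computation buys in exchange is the explicit models $\Cof^u(A)$ and $\bar k[(A\otimes_k\bar k)^\times]^{\Gal(k)}$, which are reused elsewhere (notably the multiplicative-type formula reappears in the second summand of Theorem~\ref{thm:dieudonneoffree}), so the explicit route is not wasted effort in the context of the paper even though your argument suffices for the lemma as stated.
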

\begin{proof}
We need to prove that for any finite separable extension $k \to k'$ of graded fields, $\Fr(A \otimes_k k') \cong \Fr(A) \times_{\Spec k} \Spec{k'}$ for any graded $k$-algebra $A$.
Let $\Fr_u$ be the unipotent part and $\Fr_m$ be the part of multiplicative type, corresponding to the connected and the étale parts, respectively, of the Cartier dual formal group.

The unipotent part $\Fr_u(A)$ is represented by the cofree, conilpotent, cocommutative Hopf algebra on $A$,
\[
\Cof^u(A) = \bigoplus_{n \geq 0} (A^{\otimes_k n})^{\Sigma_n},
\]
and since $k'$ is flat over $k$, taking $\Sigma_n$-fixed points commutes with base change. 

The multiplicative part $\Fr_m(A)$ is represented by $\bar k[(A \otimes_k \bar k)^\times]^\Gamma$. The argument is the same as in \cite[proof of Thm.~1.3]{bauer-carlson:tensorproduct}, using graded Galois descent:

Firstly, if $k=\bar k$ and $H=k[M]$ is a Hopf algebra of multiplicative type, then
\[
\Hom_{\Hopf{k}}(H,k[A^\times]) \cong \Hom(M,A^\times) \cong \Hom_{\Alg_k}(k[M],A),
\]
so that the claim holds when $k$ is algebraically closed. Now let $k$ be arbitrary perfect, write $\bar A = A \otimes_k \bar k$, and let $H$ be a Hopf algebra of multiplicative type with $H \otimes_k \bar k \cong \bar k[M]$. Then
\[
\Hom_{\Hopf{k}}(H,\bar k[\bar A^\times]^\Gamma) = \Hom_{\Hopf{\bar k,\Gamma}}(\bar k[M], k[\bar A^\times])
\]
by Lemma~\ref{lemma:galoisdescent}, and the latter group is isomorphic to 
\[
\Hom^\Gamma(M,\bar A^\times) \cong \Hom_{\Alg_{\bar k,\Gamma}}(\bar k[M],\bar A) \cong \Hom_{\Alg_k}(H,A),
\]
again using Lemma~\ref{lemma:galoisdescent}, this time for algebras.

Now, if $k'$ is a Galois extension of $k$, we have
\[
\bar k[\bar A^\times]^{\Gal(k)} \otimes_k k' = \Bigl(\bar k[\bar A^\times]^{\Gal(k')}\Bigr)^{\Gal(k' \mid k)} \otimes_k k' \cong \bar k[\bar A^\times]^{\Gal(k')},
\]
using Lemma~\ref{lemma:galoisdescent} once more.
\end{proof}

\begin{corollary}
The free $p$-adic affine group functor $\Fr^p\colon \Alg \to \AbSch{}^p$ induces a natural transformation of Galois descent categories.
\end{corollary}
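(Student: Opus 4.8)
The plan is to deduce this corollary from the preceding Lemma~\ref{lemma:freeandbasechange} by restricting the $p$-adic part of the free functor. The key point is that the passage from $\Fr$ to $\Fr^p$ is itself compatible with base change, so that the Galois-descent property is inherited.

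First I would recall that, by the splitting in the proof of Lemma~\ref{lemma:freeandbasechange}, we have a natural decomposition $\Fr = \Fr_u \times \Fr_m$ into the unipotent part and the part of multiplicative type, and that the Cartier-dual decomposition of Prop.~\ref{prop:formalgroupsplitting} identifies these with the connected and étale parts of the dual formal group. Both $\Fr_u(A)$ and $\Fr_m(A)$ are already $p$-adic: $\Fr_m(A)$ is of multiplicative type, hence dual to an étale formal group, which is automatically $p$-adic since the relevant $[p^n]$-cokernel system stabilizes; and $\Fr_u(A)$, being cofree conilpotent, is a colimit of its $p$-power torsion Hopf subalgebras. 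Thus $\Fr^p(A)$ is a retract of $\Fr(A)$ cut out by a natural idempotent, namely the projection onto the $p$-adic summand, and this projection is defined uniformly in $A$.

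Next I would observe that the $p$-adic part $\Fr^p$ is precisely the left adjoint of the forgetful functor $\AbSch{}^p \to \Alg^\op$, obtained by composing $\Fr$ with the reflection $\AbSch{} \to \AbSch{}^p$ (which exists because $\AbSch{}^p$ is the full subcategory of Hopf algebras $H$ with $H = \colim H[p^n]$, a reflective subcategory). Since left adjoints compose, $\Fr^p$ is well-defined; and since the reflection $\AbSch{} \to \AbSch{}^p$ commutes with the base-change functors $G \mapsto G \times_{\Spec k} \Spec k'$ — because forming $\colim H[p^n]$ commutes with the flat base change $-\otimes_k k'$ — we get
\[
\Fr^p(A \otimes_k k') \cong \bigl(\Fr(A \otimes_k k')\bigr)^p \cong \bigl(\Fr(A) \times_{\Spec k} \Spec k'\bigr)^p \cong \Fr^p(A) \times_{\Spec k} \Spec k',
\]
the middle isomorphism being Lemma~\ref{lemma:freeandbasechange}. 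This is exactly the assertion that $\Fr^p$ is a natural transformation of Galois descent categories, in the sense used throughout Section~4.

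The only mildly delicate point — and the one I expect to require the most care — is checking that the reflection onto $\AbSch{}^p$ genuinely commutes with base change along a finite separable extension $k \to k'$, i.e. that $(H \otimes_k k')[p^n] \cong H[p^n] \otimes_k k'$ and that taking the colimit over $n$ commutes with $-\otimes_k k'$. Both follow from flatness of $k'$ over $k$ (Hopf-algebra kernels are equalizers, which flat base change preserves, and filtered colimits commute with tensoring), together with the graded Galois-descent statements for Hopf algebras already used in the proof of Lemma~\ref{lemma:freeandbasechange}. Once this is in place, no new ideas are needed: the corollary is a formal consequence of Lemma~\ref{lemma:freeandbasechange} and the good behaviour of the reflective subcategory $\AbSch{}^p \subseteq \AbSch{}$.
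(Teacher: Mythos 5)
Your operative argument---identify $\Fr^p$ as $\Fr$ followed by the reflection onto $\AbSch{}^p$, invoke Lemma~\ref{lemma:freeandbasechange} for $\Fr$, and check that the reflection commutes with separable base change---is correct and agrees with the paper's strategy at the top level (the paper likewise says ``$\Fr^p$ is just $\Fr$ followed by $p$-completion''). Where you differ is in how the last step is verified: the paper uses Cor.~\ref{cor:formalgpdecomp}, notes that unipotent groups are already $p$-complete, and reduces to the statement that taking $p$-power torsion in discrete Galois modules commutes with restricting the Galois action; you instead argue directly on representing Hopf algebras, using that the Hopf kernel $H[p^n]$ is a kernel of $k$-linear maps into $H\otimes_k H$ and hence commutes with the flat base change $-\otimes_k k'$, and that the filtered colimit over $n$ commutes with tensoring. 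Your route is more elementary and avoids the \'etale/multiplicative decomposition entirely; the paper's route is shorter given the machinery already assembled in Lemma~\ref{lemma:etalefgpsaregammamodules}.

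However, your second paragraph contains a false claim that you should delete: $\Fr_m(A)$ is \emph{not} already $p$-adic. The multiplicative part is Cartier dual to the \'etale formal group attached to the discrete Galois module $(A\otimes_k\bar k)^\times$, which in general has abundant prime-to-$p$ torsion (already for $A=k$ one gets all of $\bar k^\times$); on the prime-to-$p$ part $[p^n]$ is an isomorphism, so the corresponding Hopf kernels are trivial and $p$-completion retains only $\mu_{p^\infty}(A\otimes_k\bar k)$, exactly as recorded in Thm.~\ref{thm:dieudonneoffree}. In particular $\Fr^p(A)$ is not a natural retract of $\Fr(A)$, and the sentence asserting a ``natural idempotent'' is also internally inconsistent with the claim that both factors are $p$-adic (if they were, no projection would be needed). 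Fortunately nothing in your third and fourth paragraphs depends on this, so the proof stands once that paragraph is removed.
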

\begin{proof}
The functor $\Fr^p$ is just $\Fr$ followed by $p$-completion. By Cor.~\ref{cor:formalgpdecomp} and since unipotent groups are automatically $p$-complete, it suffices to show that $p$-completion commutes with separable base change in the opposite category of $\Mod_{\Gal}$. Since $p$-completion is dual to taking the subgroup of $p$-power torsion elements in abelian groups, this claim boils down to the obvious statement that restricting group actions and taking $p$-power torsion elements commutes in abelian groups.
\end{proof}

\begin{proof}[Proof of the affine part of Theorem~\ref{thm:dieudonneoffree}]
Since for the ungraded Dieudonné functor $D^u$, of the same form as in the statement, this was proved in \cite{bauer:p-polar}, we will proceed by showing that both the left hand side $L(A) = D(\Fr(A))$ and the right hand side $R(A) = CW^u(A) \oplus (\mu_{p^\infty}(A \otimes_k \bar k) \otimes W(\bar k))^{\Gal(k)}$ are natural transformations between the Galois descent categories $\Alg$ and $\DMod{}$. 

For $L$, this is guaranteed by Lemma~\ref{lemma:freeandbasechange} and Thm.~\ref{thm:affinedieudonne}. For $R$, it is true for $CW^u$ as a subfunctor of $CW$ since $CW^u(A \otimes_k k')^\Gamma \cong CW^u(A)$ as in the proof of Thm.~\ref{thm:generaldieudonne}. For the second factor,
\[
R_2(A) = \Bigl(\mu_{p^\infty}(R \otimes_k \bar k) \otimes W(\bar k)\Bigr)^{\Gal(k)},
\]
it is almost tautological. Indeed, if $k \to k'$ is a Galois extension with group $\Gamma$, without loss of generality assumed to be a subfield of $\bar k$, then
\[
R_2(A \otimes_k k')^\Gamma = \Biggl(\Bigl(\mu_{p^\infty}(R \otimes_k k' \otimes_{k'} \bar k) \otimes W(\bar k)\Bigr)^{\Gal(k')}\Biggr)^\Gamma = R_2(A)
\]
and hence $R_2(A \otimes_k k') \cong R_2(A) \otimes_{W(k)} W(k')$ by Galois descent for $W(k)$-modules.

Now if $k=k_0$ is an ungraded field, Theorem~\ref{thm:dieudonneoffree} follows from \cite{bauer:p-polar} directly since the given isomorphism constructed there respects any gradings. If, on the other hand, $k=k_0[u^{\pm 1}]$ then we can assume, by the above descent argument, that $|u|=2$. 

The result then follows by observing that the diagram
\[
\begin{tikzcd}
\AbSch{k_0[u^{\pm 1}]}^{p,\ev} \arrow[r,"D_{k_0[u^{\pm 1}]}"] & \DMod{k_0[u^{\pm 1}]}\\
\AbSch{k_0}^{p,u} \arrow[u,"\simeq"] \arrow[r,"D^u_{k_0}"] & \DMod{k_0}^u \arrow[u,"\simeq"]
\end{tikzcd}
\]
commutes. 
\end{proof}

\begin{remark}
The reader might wonder if Theorem~\ref{thm:freeschemefactorization} cannot be directly derived from the ungraded case using Galois descent technology. The problem is that $\Pol_p$ is not a Galois descent category.
\end{remark}

\section{Properties and applications}

The factorization of the free formal group functor (Thm.~\ref{thm:freeschemefactorization}) induces a factorization
\[
\begin{tikzcd}
(\Pro-\alg_k)^{\op} \arrow[r,"\Fr"] \ar[dr,swap,"\pol"] & \FGps{k}\\
& (\Pro-\pol_p(k))^{\op}, \ar[u,"\tilde \Fr"]\\
\end{tikzcd}
\]
where the functors denoted by $\Fr$ and $\tilde \Fr$ are the unique extension of the functors from Thm.~\ref{thm:freeschemefactorization} that commute with directed colimits. In this section, we will concentrate on the free unipotent, resp. connected, construction only.

\begin{lemma}\label{lemma:formalfreeadjoint}
The functor $\hat \Fr^c\colon (\Pro-\pol_p(k))^{\op} \to \FGps{k}$ commutes with all colimits and has a right adjoint $V$.
\end{lemma}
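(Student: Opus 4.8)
The plan is to exhibit $(\Pro-\pol_p(k))^{\op}$ as a locally presentable category, verify that $\hat\Fr^c$ preserves all small colimits, and then invoke the adjoint functor theorem.

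\emph{The source is locally presentable.} We have $(\Pro-\pol_p(k))^{\op}\simeq\operatorname{Ind}(\pol_p(k)^{\op})$, which is locally finitely presentable provided $\pol_p(k)$ has finite limits. Finite products do: $\pol$ preserves products (being a right adjoint, cf.\ the Proposition on $\hull$), so $A_1\times A_2\hookrightarrow\pol(B_1)\times\pol(B_2)=\pol(B_1\times B_2)$ for embeddings $A_i\hookrightarrow\pol(B_i)$, exhibiting $A_1\times A_2$ as $p$-polar. Equalizers do: $\{a\in A:f(a)=g(a)\}$ is a graded submodule closed under $\mu$, hence a finite-dimensional $p$-polar algebra inside the same enveloping algebra as $A$. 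Finally $\hat\Fr^c$ factors through the full subcategory $\FGpsc{k}$, which by Prop~\ref{prop:formalgroupsplitting} is a direct factor of $\FGps{k}$ (so the inclusion has a right adjoint $(-)_c$) and is itself locally presentable; it therefore suffices to produce a right adjoint to the corestricted functor $(\Pro-\pol_p(k))^{\op}\to\FGpsc{k}$.

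\emph{Cocontinuity.} By construction $\hat\Fr^c$ preserves filtered colimits, and $\pol_p(k)^{\op}$ has finite colimits (by the previous paragraph), so it is enough to check that the restriction $\tilde\Fr^c\colon\pol_p(k)^{\op}\to\FGpsc{k}$ preserves finite colimits; equivalently, that the contravariant functor $A\mapsto\tilde\Fr^c(A)$ on $\pol_p(k)$ sends the terminal object, finite products, and equalizers to the initial object, coproducts, and coequalizers of $\FGpsc{k}$. I would test this through the Dieudonné anti-equivalence $\DieuFor=\FGps{k}(-,CW_k)$ of Thm~\ref{thm:generaldieudonne}: by the construction of $\tilde\Fr^c$ in the proof of the formal case of Thm~\ref{thm:freeschemefactorization}, $\DieuFor\circ\tilde\Fr^c$ is the connected part of the co-Witt functor $A\mapsto CW(A)$. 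An anti-equivalence turns finite colimits into finite limits, so the claim becomes: $CW$, and its connected summand, carries terminal objects, finite products, and equalizers of $p$-polar algebras to terminal modules, finite products, and kernels of difference maps. The terminal and product cases are immediate. For the equalizer case, the $p$-typical splitting $CW(A)=\bigoplus_j CW(A_{(j)})$ reduces us to the graded-free pieces, where $CW$ agrees with $CW^u$, together with the degree-$0$ piece $CW(A_0)$ handled by the ungraded predecessor \cite{bauer:p-polar}; and for $CW^u$ it suffices to observe that each truncation $W_n$ takes values in honest $W(k)$-modules (Lemma~\ref{lemma:wittonptypical}) and acts coordinatewise on Witt coordinates, whence $W_n(\{a:f(a)=g(a)\})=\ker\bigl(W_n(f)-W_n(g)\bigr)$, and that the colimit $CW^u=\colim\bigl(W_0(A)\to W_1(A(-1))\to\cdots\bigr)$ preserves kernels because filtered colimits of $W(k)$-modules are exact. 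Thus $\tilde\Fr^c$, hence $\hat\Fr^c$, preserves all small colimits.

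\emph{The right adjoint.} A colimit-preserving functor between locally presentable categories admits a right adjoint; applied to the corestriction $(\Pro-\pol_p(k))^{\op}\to\FGpsc{k}$ and composed with $(-)_c\colon\FGps{k}\to\FGpsc{k}$, this yields $V$. The part I expect to be the main obstacle is the equalizer computation in the second paragraph: it demands careful bookkeeping of the several (anti-)equivalences and a verification that restricting to the connected summand preserves the relevant exactness; by contrast, the terminal and product cases drop out formally from the facts that $\pol$ preserves limits and $\Fr$ is a left adjoint.
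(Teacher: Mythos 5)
Your overall architecture is sound but genuinely different from the paper's: where you make $(\Pro\text{-}\pol_p(k))^{\op}\simeq\operatorname{Ind}(\pol_p(k)^{\op})$ locally finitely presentable (by checking that $\pol_p(k)$ has finite limits) and then verify finite-colimit preservation by an explicit coordinatewise computation with Witt vectors, the paper instead applies Freyd's \emph{special} adjoint functor theorem directly to $\Pro\text{-}\pol_p(k)$ (complete, well-powered via levelwise monomorphisms, cogenerated by constant objects) and disposes of the continuity question in one line: $CW_k^c$ is representable by a pro-object, hence preserves \emph{all} limits, so no equalizer bookkeeping is needed. Your route buys a locally presentable source, which is a cleaner framework, at the cost of the hands-on verification you correctly identify as the delicate step; note also that you do not need $\FGpsc{k}$ to be locally presentable (an assertion you make without justification) --- the dual special adjoint functor theorem applies to any cocontinuous functor out of a locally presentable category into a locally small one, so that claim can simply be dropped.

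There is, however, one genuine gap: for $p>2$ your entire cocontinuity argument is routed through the Dieudonn\'e functor $\DieuFor$, which by Thm.~\ref{thm:generaldieudonne} is only an (anti-)equivalence on \emph{even} formal $p$-groups. A $p$-polar algebra $A$ may have a nonzero odd part, and $\Fr(A)$ then has an odd exterior component, which is connected (an exterior algebra is local) and hence is part of $\hat\Fr^c$, but is invisible to $CW$ and to your reduction ``$\DieuFor\circ\tilde\Fr^c$ is the connected part of $CW$.'' The paper treats this piece separately: $\hat\Fr_o$ factors through the forgetful functor to odd $k$-modules, and its right adjoint is the functor of primitives followed by the free $p$-polar algebra functor (which preserves finite-dimensionality on odd modules, since the free $p$-polar algebra embeds in an exterior algebra). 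You need to add this case, or at least restrict your Dieudonn\'e argument explicitly to the even summand of $A$ and handle $A_{\mathrm{odd}}$ by the primitives/exterior-algebra adjunction; as written, the proof only establishes the lemma for evenly graded polar algebras when $p>2$.
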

\begin{proof}
For the odd part, the functor $\hat\Fr_o$ factors as
\[
\hat\Fr_o\colon (\Pro-\pol_p(k))^{\op} \xrightarrow{U} (\Pro-\operatorname{mod}_k)_o^{\op} \xrightarrow{\hat{\Fr}} \AbSch{k}^p
\]
(cf. Prop.~\ref{prop:oddfree}), where $U$ is the forgetful functor. Then an adjoint is given by the composition of the adjoint of $\hat\Fr$ (which is the functor of primitives) and an adjoint of $U$. The latter is the objectwise free $p$-polar algebra functor, which works because the free $p$-polar algebra on an odd finite-dimensional $k$-module is again finite dimensional (it is a sub-$p$-polar algebra of the exterior algebra).

So we can restrict our attention to even formal groups. For the free connected formal group $\Fr_c$, it suffices to show that its composition with the Dieudonné equivalence $\DieuFor$ commutes with the stated colimits, and by Theorem~\ref{thm:dieudonneoffree}, it is therefore enough to show that $CW_k^c\colon \Pro-\pol_p(k) \to \DMod{k}$ commutes with all limits. But $CW_k^c$ is a formal group, i.e. representable. 

The existence of an adjoint follows from Freyd's special adjoint functor theorem once we show that $\Pro-\pol_p(k)$ is complete, well-powered, and possesses a cogenerating set. Any pro-category of a finitely complete category, such as $\pol_p(k)$, is complete, and any pro-category has constant objects as a cogenerating class. Since $\pol_p(k)$ has a small skeleton, the condition on a cogenerating set is satisfied.
To see that $\Pro-\pol_p(k)$ is well-powered, observe that a subobject $S < A$ for $A \in \Pro-\pol_p(k)$ is in particular a sub-pro-vector space. By \cite[Prop. 4.6]{artin-mazur:etale}, a monomorphism in $\Pro-\Mod_k$ can be represented by a levelwise monomorphism. Thus if $A\colon I \to \Mod_k$ represents a pro-finite $k$-module with $\#\operatorname{Sub}(A(i)) = \alpha_i$ for some (finite) cardinals $\alpha_i$ then $\#\operatorname{Sub}(A) \leq \prod_{i \in I} \alpha_i$; in particular, it is a set.
\end{proof}

\begin{lemma}\label{lemma:affinefreeadjoint}
The functor $\hat \Fr^u\colon \Pol_p(k)^{\op} \to \AbSch{k}^u$ commutes with all colimits and filtered limits, and has a right adjoint $V$.
\end{lemma}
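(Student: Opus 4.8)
The plan is to mirror the structure of the proof of Lemma~\ref{lemma:formalfreeadjoint}, replacing the pro-category of $\pol_p(k)$ with $\Pol_p(k)$ itself and the connected/formal side with the unipotent/affine side. First I would dispose of the odd part (when $p>2$): by Prop.~\ref{prop:oddfree}, $\hat\Fr^u_o$ factors through the forgetful functor $U\colon \Pol_p(k)^{\op} \to (\Mod_k)_o^{\op}$ followed by the exterior-algebra functor $\hat\Fr$, whose right adjoint is the functor of primitives; $U$ has a right adjoint given by the objectwise free $p$-polar algebra on an odd module (again finite-dimensional since it embeds in an exterior algebra), and composing these adjoints handles the odd summand. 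So we may restrict to evenly graded objects (all graded objects if $p=2$).

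For the even unipotent part, the strategy is: $\hat\Fr^u$ commutes with colimits because it is a left adjoint at the level of the defining universal property, but to be precise I would check directly that $\Reg{\Fr^u(A)} = \Cof^u(A) = \bigoplus_{n\geq 0}(A^{\otimes_k n})^{\Sigma_n}$ takes colimits of $p$-polar algebras to limits of Hopf algebras — equivalently, that the unipotent free functor is determined by a representable object on the Dieudonné side. By Thm.~\ref{thm:affinedieudonne} and the affine case of Thm.~\ref{thm:dieudonneoffree}, $D(\Fr^u(A)) = CW^u(A)$, and $CW^u$ is a filtered colimit of the representable-in-the-opposite-direction functors $W_n$; since each $W_n\colon \Pol_p(k) \to \DMod{k}$ is (pro-)representable and preserves limits, $\hat\Fr^u$ carries colimits to limits and filtered limits to filtered colimits on the Dieudonné side, giving the stated exactness properties. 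For the existence of the right adjoint $V$, I would invoke Freyd's special adjoint functor theorem: I need $\Pol_p(k)^{\op}$ to be complete (equivalently, $\Pol_p(k)$ cocomplete), well-copowered (equivalently, $\Pol_p(k)$ well-powered), and to have a cogenerating set (equivalently, a generating set in $\Pol_p(k)$). Cocompleteness of $\Pol_p(k)$ follows because it is a reflective-type subcategory of $M_k$ with left adjoint $\hull$ (the Proposition on $\hull$), so colimits are computed in $M_k$ and then corrected; well-poweredness follows because a subobject of $A\in\Pol_p(k)$ is in particular a sub-graded-$k$-vector space, and graded vector spaces are well-powered; and a generating set is provided by the free $p$-polar algebras $\Fr_{M_k}$ on finite-dimensional graded modules, which generate since every $A\in\Pol_p(k)$ is a quotient (in $M_k$, hence in $\Pol_p(k)$) of a coproduct of such free objects.

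The main obstacle I expect is verifying that $\hat\Fr^u$ genuinely preserves all colimits and not merely filtered ones — unlike the pro-category situation of Lemma~\ref{lemma:formalfreeadjoint}, here $\Pol_p(k)$ itself need not be finitely complete in a way compatible with the embedding into algebras, so I cannot simply quote representability of $CW_k^c$. The cleanest route is to argue on the Hopf-algebra side: $\Cof^u(A)$ is built from $\Sigma_n$-invariants of $\otimes_k$-powers, and since $k$ is a graded field every graded $k$-module is flat, so $(-\otimes_k-)$ and $(-)^{\Sigma_n}$ both commute with the relevant colimits of $p$-polar algebras; translating a colimit of $p$-polar algebras into the corresponding limit of Hopf algebras (using that $\pol$, being a right adjoint via $\hull$, turns colimits of algebras into colimits of $p$-polar algebras and dualizing) is where the care is needed. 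A secondary subtlety is confirming that the right adjoint $V$ lands in $\AbSch{k}^u$ and restricts compatibly with the odd/even splitting, but this is formal once SAFT has been applied in each factor.
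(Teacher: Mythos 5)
Your odd-part argument and your overall strategy for the even part (pass through the Dieudonn\'e equivalence, show that $CW^u\colon \Pol_p(k) \to \DModV{k}$ preserves limits and filtered colimits, then apply an adjoint functor theorem) are the same as the paper's. But there is a genuine gap at the central step. You justify preservation of all colimits by saying that $CW^u$ is a filtered colimit of the limit-preserving functors $W_n$ and hence preserves limits. A filtered colimit of limit-preserving functors preserves \emph{finite} limits, but not infinite products, and indeed $CW^u\bigl(\prod_i A_i\bigr)$ is \emph{not} the naive product $\prod_i CW^u(A_i)$ of underlying $W(k)$-modules. The point your argument misses --- and which the paper explicitly flags --- is that the target is $\DModV{k}$, the category of \emph{unipotent} Dieudonn\'e modules, where the product of the $CW^u(A_i)$ is the largest unipotent submodule of the naive product, i.e.\ the tuples $(x_i)$ admitting a single $n$ with $V^n(x_i)=0$ for all $i$. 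With this corrected product the comparison map is an isomorphism, because such a uniformly $V$-nilpotent tuple already lives in a single finite Witt stage of $\prod_i A_i$. Your fallback of arguing on the Hopf-algebra side via flatness over a graded field does not repair this: the obstruction is not flatness but the failure of $\bigoplus_n(({-})^{\otimes n})^{\Sigma_n}$ (equivalently of filtered colimits) to commute with infinite products, together with the need to compute the limit in the unipotent subcategory.

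A second, smaller defect is the dualization in your SAFT application. To obtain a right adjoint of the colimit-preserving functor $\hat\Fr^u\colon \Pol_p(k)^{\op} \to \AbSch{k}^u$ you need $\Pol_p(k)^{\op}$ to be cocomplete, co-well-powered and equipped with a generating set, i.e.\ $\Pol_p(k)$ must be \emph{complete}, well-powered and have a \emph{cogenerating} set; you instead verify cocompleteness and a generating set, which are the hypotheses for the wrong-handed adjoint. The needed hypotheses do hold, but the cleaner route (and the one the paper takes) is to observe that $\Pol_p(k)$ is locally presentable and that $D\circ\hat\Fr^u$ is accessible and limit-preserving, and then invoke the adjoint functor theorem for locally presentable categories.
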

\begin{proof}
As in Lemma~\ref{lemma:formalfreeadjoint}, the right adjoint on odd affine groups is given by the functor of primitives follow by the free $p$-polar algebra functor, so we will restrict our attention to even $p$-adic affine groups. By Theorem~\ref{thm:dieudonneoffree}, it suffices to show that the functors $CW^u\colon \Pol_p(k) \to \DModV{k}$ commutes with all limits, which looks wrong until one realizes that limits in $\DModV{k}$ are not the same as limits in $\DMod{k}$.

The functor $CW^u(A)$ commutes with finite limits (it is ind-representable), so it suffices to show it commutes with infinite products. Indeed, the natural map
\[
CW^u(\prod_i A_i) \to \prod_i CW^u(A_i)
\]
is an isomorphism; an element in the right hand side is a set of elements $(x_i \in CW^u(A_i))$ such that there is an $n \gg 0$ such that $V^n(x_i)=0$ for all $i$. 



The commutation with filtered colimits is straightforward: $CW^u$ commutes with them because it is a colimits of functors represented by small objects (polarizations of finitely presented algebras).

For the existence of an adjoint, we apply again the special adjoint functor theorem in the form of \cite[Thm.~1.66]{adamek-rosicky:presentable-accessible}. The category $\Pol_p(k)$ is locally presentable and the functor $D \circ \hat Fr^u$, as just shown, is accessible (commutes with $\omega$-filtered colimits) and commutes with all limits.
\end{proof}

Note that this implies, by taking adjoint functors in Thm.~\ref{thm:freeschemefactorization}, that the algebra underlying a unipotent Hopf algebra $H$ is always of the form $\hull(V(H))$, i.e. free over a $p$-polar $k$-algebra, and the pro-finite algebra underlying a complete connected Hopf algebra $H$ is always of the form $\hull(V(H))$, i.e. free over a profinite $p$-polar $k$-algebra. Of course, this is also a direct corollary of Borel's work on the structure of algebras underlying Hopf algebras \cite{borel:homologie-des-groupes-de-Lie,milnor-moore:hopf} .

\begin{lemma}\label{lemma:cofreeiscofreeascoalg}
Let $A$ be a $p$-polar $k$-algebra and $H=\Cof^u(A)$ the unipotent Hopf algebra representing the $p$-adic affine group $\hat \Fr^u(A)$. Then $H$ is isomorphic, as a pointed coalgebra, to the symmetric tensor coalgebra on the $k$-vector space $A$.
\end{lemma}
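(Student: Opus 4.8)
The plan is to reduce to the case where $A$ is the polarization of an ordinary algebra — where the statement is essentially already available — and then transport it to a general $p$-polar algebra by exploiting that the free-group functor is a right adjoint once it is regarded as valued in Hopf algebras. Concretely, recall from the proof of Lemma~\ref{lemma:freeandbasechange} that for an ordinary graded commutative $k$-algebra $B$ one has $\Cof^u(\pol B)=\Cof^u(B)=\Reg{\Fr_u(B)}=\bigoplus_{n\ge 0}(B^{\rtensor k n})^{\Sigma_n}$, with comultiplication the symmetric deconcatenation coproduct. That coproduct refers only to the underlying $k$-vector space of $B$, which is also the underlying $k$-vector space of $\pol(B)$; so as a pointed coalgebra $\Cof^u(\pol B)\cong\Sym^c(\pol B)$, where $\Sym^c(V)=\bigoplus_{n\ge 0}(V^{\rtensor k n})^{\Sigma_n}$ is the symmetric tensor coalgebra. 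Thus the lemma holds for $A=\pol(B)$.

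For a general $A\in\Pol_p(k)$, fix an embedding $\iota\colon A\hookrightarrow\pol(B)$ with $B\in\Alg_k$ (for instance $B=\hull(A)$ with $\iota$ the unit of $\hull\dashv\pol$). Via the anti-equivalence $\AbSch{k}^u\simeq(\Hopfu k)^{\op}$, regard $\hat\Fr^u$ as the covariant functor $\Cof^u\colon\Pol_p(k)\to\Hopfu k$; by Lemma~\ref{lemma:affinefreeadjoint} it is right adjoint to $V$, hence preserves monomorphisms, so $\Cof^u(\iota)\colon H=\Cof^u(A)\to\Cof^u(\pol B)\cong\Sym^c(B)$ is injective on underlying pointed coalgebras. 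Since $\Sym^c$ is a cofree-coalgebra functor it also preserves monomorphisms, so $\Sym^c(A)$ is a sub-coalgebra of $\Sym^c(B)$, and the claim reduces to showing that the image of $\Cof^u(\iota)$ is exactly $\Sym^c(A)$. This identification is natural in $A$, producing a natural transformation $j\colon\Cof^u\Rightarrow\Sym^c$ whose components are the desired isomorphisms.

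One inclusion is formal: by the universal property of the cofree coalgebra, a map $C\to\Sym^c(V)$ of pointed coalgebras is the sum over $n$ of the symmetrizations of $\lambda^{\rtensor k n}$ applied to the $(n-1)$-fold reduced coproduct, where $\lambda\colon C\to V$ is the underlying corestriction; hence the image lies in $\Sym^c(\operatorname{im}\lambda)$, and the triangle identities for the adjunctions in play identify $\operatorname{im}\lambda$ with $A\subseteq B$ — equivalently, they show $\Primnoarg(H)=A$, so that $H$ maps into $\Sym^c(A)$.

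For the reverse inclusion — that each $j_A$ is onto — I would reduce to $A\in\pol_p(k)$ finite-dimensional, which is legitimate because $\Cof^u$ commutes with filtered colimits ($\hat\Fr^u$ commutes with filtered limits by Lemma~\ref{lemma:affinefreeadjoint}) as does $\Sym^c$ (finite limits commute with filtered colimits), and $j$ is natural, so $j_A=\colim_\alpha j_{A_\alpha}$ over the finite-dimensional sub-$p$-polar algebras $A_\alpha$ of $A$. For such $A$ over a graded field every graded piece of $H$, of $A$ and of $\Sym^c(A)$ is finite-dimensional, and a coalgebra embedding of graded coalgebras of this sort is an isomorphism as soon as the Poincaré series agree; the Poincaré series of $H=\Reg{\hat\Fr^u A}$ is computed from the fact — Borel's structure theorem, via the remark following Lemma~\ref{lemma:affinefreeadjoint} — that its underlying algebra is the polynomial algebra $\hull(V(H))$ on $\Indecnoarg(H)$, and one checks it agrees with the combinatorially transparent Poincaré series of $\Sym^c(A)$. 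This final dimension count is the main obstacle: it requires pinning down the ranks of $V(H)$, equivalently of $\Indecnoarg(H)$, in each degree and matching them with those of $\Sym^c(A)$, and it is exactly here that the graded, perfect-field hypotheses are used to keep every graded component finite-dimensional.
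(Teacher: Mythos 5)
Your strategy has a genuine gap, concentrated exactly where you flag it: the surjectivity of $H\to S(A)$ is not established, and the route you propose cannot establish it in the stated generality. First, the Poincar\'e-series comparison is unavailable when $A$ has a nonzero degree-$0$ part (the lemma allows this, e.g.\ $k=k_0$ and $A=A_0$): then $S(A)_0\supseteq\bigoplus_n\bigl((A_0)^{\otimes n}\bigr)^{\Sigma_n}$ is infinite-dimensional even for finite-dimensional $A$, and no grading is left to make the count finite. Second, even where all graded pieces are finite, the count is circular: your only handle on the size of $H=\Cof^u(A)$ is that its underlying algebra is $\hull(V(H))$, but $V(H)$ is precisely the unknown --- identifying it (with $A$, via the counit) is essentially equivalent to the lemma. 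A related circularity affects your ``formal inclusion'': you need $\Primnoarg(H)=A$, but in the paper this is a \emph{corollary} of the lemma, and the appeal to ``the triangle identities'' does not produce it --- the counit $V(\Cof^u(A))\to A$ is a map of $p$-polar algebras which is not obviously surjective, nor obviously identified with the coalgebra corestriction $H\to B$. (Your base case $A=\pol(B)$ also quietly assumes that the coproduct on $\bigoplus_n(B^{\otimes n})^{\Sigma_n}$ is the deconcatenation one, which is more than the vector-space identification recorded in Lemma~\ref{lemma:freeandbasechange} and is itself an instance of the lemma.)

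The paper's proof sidesteps all of this by never analyzing the right adjoints directly: the claim $U_2\circ\Cof^u\cong S\circ U_1$ is equivalent, by uniqueness of adjoints, to the commutativity of the square of \emph{left} adjoints $V\circ\Fr\cong\Fr\circ U$ from $\Coalg_k^u$ to $\Pol_p(k)$. That square is checked by writing down the natural comparison map (induced by $C\to\Sym(\bar C)$) and applying the conservative functor $\hull\colon\Pol_p(k)\to\Alg_k$, under which both composites become the symmetric algebra on $\bar C$; no injectivity, surjectivity, or dimension count is needed. If you want to rescue your approach, the missing ingredient is an independent computation of $V(\Cof^u(A))$ or of $\Primnoarg(\Cof^u(A))$; the adjoint-square argument is the cleanest way to obtain it.
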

\begin{proof}
The symmetric tensor coalgebra on a vector space $V$ is given by
\[
S(V) = \bigoplus_{i \geq 0} S^i(V) \quad \text{with} \quad S^i(V)= \bigl(V^{\otimes i}\bigr)^{\Sigma_i},
\]
and it is a pointed coalgebra by the inclusion $k \cong S^0(V) \subset S(V)$. A pointed coalgebra $C$ is conilpotent if for each $x \in C$, $\psi^N(x) \in C^{\otimes (N+1)}$ maps to $0$ in $\bar C^{\otimes (N+1)}$, where $\bar C$ is the cokernel of the pointing. The coalgebra $S(V)$ is conilpotent and, indeed, the right adjoint to the forgetful functor $U$ from the category $\Coalg_k^u$ of pointed, conilpotent, cocommutative coalgebras to $k$-vector spaces, mapping a coalgebra $C$ to $\bar C$.

Note that a Hopf algebra is unipotent if and only if its underlying pointed coalgebra is conilpotent. The claim is that the diagram
\[
\begin{tikzcd}
\Pol_p(k) \arrow[r,"\Cof^u"] \arrow[d,"U_1"] & \Hopf{k}^u \arrow[d,"U_2"]\\
\Mod_k \arrow[r,"S"] & \Coalg_{k}^u
\end{tikzcd}
\]
$2$-commutes.  By taking left adjoint functors, this is equivalent to the $2$-commutativity of the square in the diagram
\[
\begin{tikzcd}
\Pol_p(k) & \Hopf{k}^u \arrow[l,swap,"V"] \\
\Mod_k  \ar[u,"\Fr"] & \Coalg_{k}^u. \arrow[l,"U"] \arrow[u,"\Fr"]
\end{tikzcd}
\]
The free commutative Hopf algebra on $C \in \Coalg_k^u$ is given by the symmetric algebra $\Sym(\bar C)$, and hence there is a natural map of $k$-modules $C \to \Sym(\bar C)$ given in degree $0$ by the augmentation and in degree one by the projection $C \to \bar C$. This map $\phi\colon U(C) \to U_1(V(C))$ is adjoint to a map $\phi\colon \Fr(U(C)) \to V(\Fr(C))$ in $\Pol_p(k)$. To see that this map is an isomorphism, we consider its image under the conservative functor
\[
\hull\colon \Pol_p(k) \to \Alg_k.
\]
Since $\hull\circ V\colon \Hopf{k}^u\to \Alg_k$ is the forgetful functor and $\hull \circ \Fr\colon \Mod_k \to \Alg_k$ is the symmetric algebra functor, we see that $\hull(\phi)$ is the identity on $\Sym(C)$.
\end{proof}

\begin{corollary}
Let $H$ be a unipotent Hopf algebra which is unipotent cofree on a $p$-polar $k$-algebra $A$. Then $A$ is isomorphic to the vector space of primitive elements $PH$.
\end{corollary}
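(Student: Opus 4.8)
The plan is to deduce this immediately from Lemma~\ref{lemma:cofreeiscofreeascoalg}. By definition, ``unipotent cofree on $A$'' means $H = \Cof^u(A)$, so that lemma supplies an isomorphism of pointed coalgebras $H \cong S(A)$, the symmetric tensor coalgebra $\bigoplus_{i\ge 0}(A^{\otimes i})^{\Sigma_i}$ on the $k$-vector space underlying $A$. Since the subspace of primitive elements depends only on the pointed coalgebra structure (it is $\{x : \psi x = x\otimes 1 + 1\otimes x\}$, with $1$ the distinguished grouplike, and an isomorphism of pointed coalgebras preserves both $\psi$ and the point), the statement reduces to identifying $PS(A)$ with $S^1(A) = A$.

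To do that I would argue by degree. The comultiplication on $S(A)$ is the restriction to the subcoalgebra $S(A)$ of the deconcatenation coproduct on the tensor coalgebra, hence graded; its reduced part $\bar\psi$ carries $S^n(A)$ into $\bigoplus_{a+b=n,\ a,b\ge 1} S^a(A)\otimes S^b(A)$. Therefore $PS(A)$ is a graded subspace, concentrated in degrees $\ge 1$ (the augmentation ideal is $\bigoplus_{n\ge 1}S^n(A)$), and a homogeneous element of degree $n$ is primitive precisely when it lies in $\ker\bar\psi$. In degree $1$ the target of $\bar\psi$ is zero, so $S^1(A) = A \subseteq PS(A)$. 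In degree $n\ge 2$, the component of $\bar\psi(S^n(A))$ landing in $S^1(A)\otimes S^{n-1}(A)$ is, via $S^n(A)\subseteq A^{\otimes n} = A\otimes A^{\otimes(n-1)}$, simply the inclusion of $S^n(A)$ into $A\otimes A^{\otimes(n-1)}$ (which automatically corestricts to $A\otimes S^{n-1}(A)$, a symmetric tensor being in particular $\{1\}\times\Sigma_{n-1}$-invariant); this is injective, so $\bar\psi$ is injective on $S^n(A)$ and contributes no primitives. Hence $PS(A) = S^1(A) = A$, and transporting back along the isomorphism of Lemma~\ref{lemma:cofreeiscofreeascoalg} gives $PH \cong A$.

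There is no serious obstacle here: the content of the corollary is entirely absorbed into Lemma~\ref{lemma:cofreeiscofreeascoalg}, and what remains is the short grading computation above. The one point worth emphasising is that, although over a field of characteristic $p$ one might worry that $S(A)$ acquires extra primitives (as happens for divided-power coalgebras), the explicit injectivity of the $S^1\otimes S^{n-1}$-component of $\bar\psi$ on $S^n(A)$ for $n\ge 2$ rules this out directly, requiring no hypothesis on the characteristic.
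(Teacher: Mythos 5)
Your proposal is correct and follows the same route as the paper: both reduce the corollary to Lemma~\ref{lemma:cofreeiscofreeascoalg} together with the identity $P(S(M))\cong M$. The only difference is that the paper simply cites $P(S(M))\cong M$ while you verify it via the injectivity of the $S^1\otimes S^{n-1}$ component of the reduced coproduct, which is a correct (and in characteristic $p$ worthwhile) check.
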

\begin{proof}
If $H$ is as in the statement then $U_2(H) \cong S(PH)$ as coalgebras, but by the preceding Lemma, $U_2(H) \cong S(U_1(A))$. Applying the functor $P$ and noting that $P(S(M)) \cong M$, we find that $PH \cong U_1(A)$.
\end{proof}

\begin{remark}
It is not true that $V(H)=P(H)$ in general, or that $P(H)$ is a $p$-polar algebra. Also, if $H$ is a unipotent Hopf algebra whose underlying pointed unipotent coalgebra is cofree, $H$ is not necessarily cofree over a $p$-polar $k$-algebra. For example, consider the graded Hopf algebra $H$ dual to $H^*=k[x,y]$ with $|x|=j>0$, $|y|=p^2j$, $x$ primitive and $\psi(y) = y \otimes 1 + 1 \otimes y + \sum_{i=1}^{p-1} \frac1{i!(p-i)!} x^{pi} \otimes x^{p(p-i)}$. Then the primitives $PH$ are dual to the indecomposables $Q(H^*) = \langle x,y\rangle$, i.e. $PH = \langle a,b\rangle$ with $|a|=j$, $|b|=p^2j$. Suppose $H$ was cofree, to that $PH$ is a $p$-polar $k$-algebra by the corollary above. For degree reasons, $PH$ cannot carry any but the trivial $p$-polar algebra structure, $PH \cong \langle a \rangle \times \langle b \rangle$. As a right adjoint, $\Cof^u$ commutes with products and hence $\Cof^u(PH) = \Cof(\langle a \rangle) \otimes \Cof(\langle b \rangle) = (k[x] \otimes k[y])^* = H'$. But $H \not\cong H'$ as Hopf algebras since $P(H^*) = \langle x,x^p,x^{p^2},\dots \rangle$, while $P((H')^*) = \langle x,x^p,\dots,y,y^p,\dots\rangle$, a contradiction.
\end{remark}
\begin{proof}[Proof of Thm.~\ref{thm:lambdapcofree}]
Let $\Lambda_p = k[\theta_{j,0},\theta_{j,1},\dots]$ be the Hopf algebra representing the functor of $p$-typical Witt vectors. Denote by $\eta\colon \Lambda_p \to \Cof^u(V(\Lambda_p))$ the unit of the adjunction. Applying the functor of primitives, since $\Cof^u(V(\Lambda_p))$ is cofree as a coalgebra by Lemma~\ref{lemma:cofreeiscofreeascoalg}, we obtain a map
\[
P\eta\colon P(\Lambda_p) \to V(\Lambda_p).
\]
Now $V(\Lambda_p) = \pol_{(j)}(\Lambda_p) = k\langle \theta_{j,i}^{p^k} \mid i,k \geq 0\rangle$ and $P(\Lambda_p) = k\langle \theta_{j,0}^{p^k} \mid k \geq 0\rangle$, and $P\eta$ is the inclusion map. We see that $P(\Lambda_p)$ is in fact a direct factor of $V(\Lambda_p)$ as a $p$-polar algebra, with an retraction $p\colon V(\Lambda_p) \to P(\Lambda_p)$ given by
\[
p(\theta_{j,i}) \mapsto \begin{cases} \theta_{j,0}; & i=0\\
0; & \text{otherwise} \end{cases}
\]
Taking adjoints, we obtain a map of Hopf algebras $q\colon \Lambda_p \to \Cof^u(P\Lambda_p)$. A map of unipotent Hopf algebras is injective iff it is injective on primitives, and $Pq\colon P\Lambda_p \to P(\Cof^i(P\Lambda_p))\cong P\Lambda_p$ is the identity, so $q$ is injective. By dimension considerations, it must also be surjective.
\end{proof}

\bibliographystyle{alpha}
\bibliography{bibliography}

\end{document}